\title{Intertwining operators among modules for affine Lie algebra and lattice 
vertex operator algebras which respect integral forms}
\author{Robert McRae}
\date{}
    \theoremstyle{definition}\newtheorem{rema}{Remark}[section]
    \theoremstyle{plain}\newtheorem{propo}[rema]{Proposition}
    \newtheorem{theo}[rema]{Theorem}
    \newtheorem{defi}[rema]{Definition}
    \newtheorem{lemma}[rema]{Lemma}
    \newtheorem{corol}[rema]{Corollary}
    \theoremstyle{definition}
\begin{document}
\bibliographystyle{alpha}
\maketitle

\newcommand{\ghat}{\widehat{\mathfrak{g}}}
\newcommand{\Z}{\mathbb{Z}}
\newcommand{\gvmu}{V_{\widehat{\mathfrak{g}}}(\ell,U)}
\newcommand{\imu}{L_{\widehat{\mathfrak{g}}}(\ell,U)}
\newcommand{\gvmzero}{V_{\widehat{\mathfrak{g}}}(\ell,0)}
\newcommand{\imzero}{L_{\widehat{\mathfrak{g}}}(\ell,0)}
\newcommand{\gvmlambda}{V_{\widehat{\mathfrak{g}}}(\ell,L_\lambda)}
\newcommand{\imlambda}{L_{\widehat{\mathfrak{g}}}(\ell,L_\lambda)}
\numberwithin{equation}{section}

\begin{abstract}
\noindent We define an integral intertwining operator among modules for a 
vertex 
operator algebra to be an intertwining operator which respects integral forms 
in 
the modules, and we show that an intertwining operator is integral if it is 
integral when restricted to generators of the integral forms in the modules. We 
apply this result to classify integral intertwining operators which respect 
certain natural integral forms in modules for affine Lie algebra and lattice 
vertex operator algebras.
\end{abstract}

\section{Introduction}

Integral forms for lattice vertex algebras were first introduced by Borcherds 
in 
\cite{B} and were also studied in \cite{P}. They have been used in the modular 
moonshine program initiated by Borcherds and Ryba (\cite{R}, \cite{BR1}, 
\cite{BR2}, \cite{GL}). Recently, integral forms in vertex algebras related to 
lattice vertex algebras, including automorphism fixed points and the moonshine 
module $V^\natural$, have been constructed in \cite{DG} and \cite{GL}, and 
their 
automorphism groups have been studied. The representation theory of integral 
vertex algebras based on affine Lie algebras and lattices was studied in 
\cite{M2}: in particular, integral forms were constructed in both vertex 
algebras and their modules. It was also determined that the graded 
$\mathbb{Z}$-duals of integral forms in modules for these algebras form natural 
integral forms in the contragredient modules.

In the present paper, we continue to study the representation theory of 
integral 
vertex operator algebras in general, and in particular the representation 
theory 
of integral affine Lie algebra and lattice vertex operator algebras, by showing 
when intertwining operators among modules respect integral forms. Intertwining 
operators play an essential role in the representation theory of vertex 
operator 
algebras. For instance associativity of intertwining operators (\cite{H1}, 
\cite{HLZ3}) and modular invariance for traces of products of intertwining 
operators (\cite{H2}) are crucial for showing that modules for certain vertex 
operator algebras form modular tensor categories (\cite{H3}; see also the 
review 
article \cite{HL}). Thus it is natural to consider intertwining operators among 
modules for vertex algebras over $\mathbb{Z}$. 

Since also vertex algebras over $\Z$ lead to vertex algebras over finite fields 
(and more generally over arbitrary fields of prime characteristic) through 
reducing structure constants mod a prime $p$, intertwining operators among 
modules for vertex algebras over $\Z$ lead to intertwining operators among 
modules for vertex algebras over fields of characteristic $p$. Intertwining 
operators among modules for the Virasoro vertex operator algebra 
$L(\frac{1}{2},0)$ over fields of odd prime characteristic have already been 
studied in \cite{DR}. The present paper will have immediate application to 
intertwining operators among modules for affine Lie algebra and lattice vertex 
operator algebras in prime characteristic.

If $V$ is a vertex operator algebra and $W^{(i)}$ for $i=1,2,3$ are a triple of 
$V$-modules, then an intertwining operator of type 
$\binom{W^{(3)}}{W^{(1)}\,W^{(2)}}$ is a linear map
\begin{equation*}
 \mathcal{Y}: W^{(1)}\otimes W^{(2)}\rightarrow W^{(3)}\{x\}
\end{equation*}
that satisfies a lower truncation axiom, an $L(-1)$-derivative property, and a 
Jacobi identity. Here $W^{(3)}\lbrace x\rbrace$ represents formal series with 
arbitrary complex powers and coefficients in $W^{(3)}$. If $V$ and the modules 
$W^{(i)}$ have integral forms $V_\Z$ and $W^{(i)}_\Z$, respectively, then it is 
natural to consider which intertwining operators among these modules respect 
the 
integral forms. In particular, it is natural to say that an intertwining 
operator of type $\binom{W^{(3)}}{W^{(1)}\,W^{(2)}}$ is integral with respect 
to 
the integral forms $W^{(i)}_\Z$ if
\begin{equation}\label{intwopint}
 \mathcal{Y}(w_{(1)},x)w_{(2)}\in W^{(3)}_\Z\lbrace x\rbrace
\end{equation}
for $w_{(1)}\in W^{(1)}_\Z$ and $w_{(2)}\in W^{(2)}_\Z$.

The main general result of this paper is that in order to check that an 
intertwining operator of type $\binom{W^{(3)}}{W^{(1)}\,W^{(2)}}$ is integral, 
it is enough to check that \eqref{intwopint} holds when $w_{(1)}$ and $w_{(2)}$ 
come from generating sets of $W^{(1)}_\Z$ and $W^{(2)}_\Z$, respectively. This 
result fits with the general philosopy of \cite{M2} that integral forms of 
vertex operator algebras and modules are often best studied using generating 
sets. In the remainder of the paper, we apply the general theorem to vertex 
operator algebras coming from affine Lie algebras and lattices; natural 
integral 
forms in modules for these algebras were obtained in \cite{M2}.

Interestingly, the examples studied in this paper suggest another notion of integrality for intertwining operators different from \eqref{intwopint}. Specifically,
suppose that $V$ is an affine Lie algebra or lattice vertex operator algebra 
with modules $W^{(i)}$ for $i=1,2,3$, and suppose that $V_\Z$ and $W^{(i)}_\Z$ 
are the integral forms in $V$ and $W^{(i)}$, respectively, that were 
constructed in \cite{M2}. Then we are able to find and classify non-zero 
intertwining operators which are integral with respect to  $W^{(1)}_\Z$, 
$W^{(2)}_\Z$, and $((W^{(3)})'_\Z)'$. Here $((W^{(3)})'_\Z)'$ is the graded 
$\Z$-dual of the natural integral form in the contragredient module 
$(W^{(3)})'$; this is an integral form of $W^{(3)}$ which is often larger than 
$W^{(3)}_\Z$. In other words, we are able to classify the intertwining 
operators 
of type $\binom{W^{(3)}}{W^{(1)}\,W^{(2)}}$ which satisfy
\begin{equation}\label{intwopint2}
 \langle w_{(3)}',\mathcal{Y}(w_{(1)},x)w_{(2)}\rangle\in\Z\lbrace x\rbrace
\end{equation}
when $w_{(1)}$, $w_{(2)}$, and $w_{(3)}'$ come from the natural integral forms 
of $W^{(1)}$, $W^{(2)}$, and $(W^{(3)})'$, respectively. This suggests that 
\eqref{intwopint2} is a more fundamental integrality condition than \eqref{intwopint}, a notion supported by the fact that \eqref{intwopint2} corresponds to an integrality condition on physically relevant correlation functions in conformal field theory.

The remainder of this paper is structured as follows. In the next section we 
recall the notions of integral form in a vertex operator algebra and in a 
module 
for a vertex operator algebra from \cite{M2} and recall some results on 
integral 
forms in contragredient modules. In Section 3, we recall the definition of 
intertwining operator among modules for a vertex operator algebra and define 
what it means for an intertwining operator to be integral with respect to 
integral forms in the modules. We also prove our general result that an 
intertwining operator is integral if it is integral when restricted to 
generators. Finally, we derive a way to use intertwining operators to identify 
a 
module with its contragredient; this will be used in the last section when we 
study integral intertwining operators for lattice vertex algebras.

In Section 4 we recall the construction of the level $\ell$ generalized Verma 
module vertex operator algebra $\gvmzero$ based on an affine Lie algebra 
$\widehat{\mathfrak{g}}$, where $\mathfrak{g}$ is a finite-dimensional simple 
complex Lie algebra. We also recall the classification of irreducible 
$\gvmzero$-modules, including the irreducible quotient $\imzero$ of 
$\gvmzero$. 
We recall from \cite{FZ} (and \cite{Li2}) the characterization of intertwining 
operators among $\imzero$-modules for nonnegative integral level $\ell$. In 
Section 5, we recall the natural integral forms in $\gvmzero$- and 
$\imzero$-modules from \cite{M2} and classify integral intertwining operators 
among $\imzero$-modules when $\ell$ is a nonnegative integer.

In Section 6, we recall the construction of the vertex algebra $V_L$ and its 
modules from an even nondegenerate lattice $L$ and recall from \cite{DL} the 
construction of intertwining operators among $V_L$-modules. In Section 7, we 
recall from \cite{M2} the natural integral form in a $V_L$-module and use 
nondegenerate invariant bilinear pairings to explicitly describe the graded 
$\Z$-dual of the natural integral form of a $V_L$-module. Using this, we 
classify integral intertwining operators among $V_L$-modules.

\paragraph{Acknowledgments}
This paper is part of my thesis \cite{M1}, completed at Rutgers University. I 
am 
very grateful to my advisor James Lepowsky for all of his advice and 
encouragement, and to Yi-Zhi Huang for comments on this paper.

\section{Integral forms in vertex operator algebras and modules}

Throughout this paper, we will use standard terminology and notation from the 
theory of vertex operator algebras, sometimes without comment. In particular, 
we 
will use the notion of vertex algebra as defined in \cite{B} and the notion of 
vertex operator algebra as defined in \cite{FLM} (see also \cite{LL}). We will 
also use the notion of module for a vertex (operator) algebra as defined in 
\cite{LL} (where modules for a vertex operator algebra have a conformal weight 
grading by $\mathbb{C}$). We recall that a vertex operator algebra and its 
modules admit representations of the Virasoro Lie algebra, and as usual we use 
$L(n)$ to denote the action of Virasoro algebra operators on a vertex operator 
algebra and its modules.

Vertex algebras over $\Z$ make sense, and we refer to them as vertex rings. As 
in \cite{M2}, we define an integral form in a vertex operator algebra as 
follows:
\begin{defi}
 An \textit{integral form} of a vertex operator algebra $V$ is a vertex subring 
$V_\Z\subseteq V$ that is an integral form of $V$ as a vector space and that is 
compatible with the conformal weight grading of $V$:
 \begin{equation*}\label{compatibility}
  V_\Z=\coprod_{n\in\mathbb{Z}} V_n\cap V_\Z,
 \end{equation*}
where $V_n$ is the weight space with $L(0)$-eigenvalue $n$.
\end{defi}
\begin{rema}
 Equivalently, an integral form $V_\Z$ of $V$ is the $\mathbb{Z}$-span of a basis for $V$ 
which contains the vacuum $\mathbf{1}$, is closed under vertex algebra 
products, 
and is compatible with the conformal weight gradation.
\end{rema}
\begin{rema}
 In \cite{DG}, the definition of an integral form $V_\Z$ of $V$ is somewhat 
different: compatibility with the weight gradation is replaced by the 
requirement 
that an integral multiple of the conformal vector $\omega$ be in $V_\Z$. In the examples studied in this paper, both conditions hold.
\end{rema}

Once we have fixed an integral form $V_\Z$ of a vertex operator algebra $V$, it 
is easy to define the notion of an integral form in a $V$-module:
\begin{defi}
 An \textit{integral form} in a $V$-module $W$ is a $V_\Z$-submodule 
$W_\Z\subseteq W$ that is an integral form of $W$ as a vector space and that is 
compatible with the conformal weight grading of $W$:
 \begin{equation*}\label{modcompatibility}
  W_\mathbb{Z}=\coprod_{h\in\mathbb{C}} W_h\cap W_\Z,
 \end{equation*}
where $W_h$ is the weight space with $L(0)$-eigenvalue $h$.
\end{defi}
\begin{rema}
 Equivalently, an integral form $W_\Z$ of $W$ is the $\mathbb{Z}$-span of a basis for $W$ 
which is preserved by vertex operators from $V_\Z$ and is compatible with the 
conformal weight gradation.
\end{rema}
\begin{rema}
 Note that the notion of an integral form of $W$ depends on the integral form 
$V_\Z$ of $V$ that is used.
\end{rema}

Here we note that our approach to studying integral forms in vertex operator 
algebras and their modules uses generating sets. If $V_\Z$ is an integral form 
of a vertex operator algebra $V$, we say that $S\subseteq V_\Z$ generates 
$V_\Z$ 
if $V_\Z$ is the smallest vertex subring of $V$ containing $S$. Similarly, if 
$W$ is a $V$-module with integral form $W_\Z$, we say that $T\subseteq W_\Z$ 
generates $W_\Z$ if $W_\Z$ is the smallest $V_\Z$-submodule of $W$ containing 
$T$.

For our study of intertwining operators in this paper, we will also need to use 
contragredients of modules for a vertex operator algebra $V$. Recall from 
\cite{FHL} that if $W$ is a $V$-module, the contragredient $V$-module $W'$ is 
the graded dual
\begin{equation*}
 W'=\coprod_{h\in\mathbb{C}} W_h^*
\end{equation*}
equipped with the vertex operator defined by
\begin{equation*}
 \langle Y_{W'}(v,x)w',w\rangle=\langle w', Y^o_W(v,x)w\rangle
\end{equation*}
for $v\in V$, $w\in W$, and $w'\in W'$. Here $Y^o_W$ denotes the 
\textit{opposite vertex operator}
\begin{equation}\label{oppopdef}
Y^o_W(v,x) = Y(e^{x L(1)} (-x^{-2})^{L(0)} v,x^{-1}).
\end{equation}
Recall also from \cite{FHL} that for any $V$-module $W$, $W\cong (W')'$, so 
that it makes sense to refer to contragredient pairs of $V$-modules.

If $V_\Z$ is an integral form of $V$ and $W$ is a $V$-module with integral 
form 
$W_\Z$, then the graded $\mathbb{Z}$-dual of $W_\Z$ defined by
\begin{equation*}
 W'_\Z=\lbrace w'\in W'\,\vert\,\langle 
w',w\rangle\in\Z\,\,\mathrm{for}\,\,w\in 
W_\Z\rbrace
\end{equation*}
is an integral form of $W'$ as a vector space. The following two propositions 
from \cite{M2} (see also Lemma 6.1, Lemma 6.2, and Remark 6.3 in \cite{DG}) 
show 
when $W'_\Z$ is a $V_\Z$-module:
\begin{propo}
 Suppose $V_\mathbb{Z}$ is preserved by $\frac{L(1)^n}{n!}$ for $n\geq 0$.
Then $W'_\mathbb{Z}$ is preserved by the action of $V_\mathbb{Z}$.
\end{propo}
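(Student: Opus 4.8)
The plan is to unwind the definition of the contragredient action and reduce the claim to a statement about the opposite vertex operator $Y^o_W$ acting on $W_\Z$. By the definition of $W'_\Z$, an element $w''\in W'$ lies in $W'_\Z$ if and only if $\langle w'',w\rangle\in\Z$ for all $w\in W_\Z$. Hence, to show that every coefficient of $Y_{W'}(v,x)w'$ lies in $W'_\Z$ for $v\in V_\Z$ and $w'\in W'_\Z$, it suffices to show that $\langle Y_{W'}(v,x)w',w\rangle=\langle w',Y^o_W(v,x)w\rangle$ has all coefficients in $\Z$ for every $w\in W_\Z$. Since $w'\in W'_\Z$ pairs integrally with $W_\Z$, this in turn reduces to showing that every coefficient of $Y^o_W(v,x)w$ lies in $W_\Z$ whenever $v\in V_\Z$ and $w\in W_\Z$. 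Because $V_\Z$ is compatible with the weight grading, I may assume $v$ is homogeneous, say of weight $k\in\Z$, and extend by $\Z$-linearity at the end.

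The key computation is to expand the formula \eqref{oppopdef} for $Y^o_W$. For homogeneous $v$ of weight $k$ one has $(-x^{-2})^{L(0)}v=(-1)^k x^{-2k}v$, so
\begin{equation*}
 Y^o_W(v,x) = (-1)^k\sum_{n\geq 0} x^{n-2k}\, Y\!\left(\frac{L(1)^n}{n!}v,\, x^{-1}\right).
\end{equation*}
The crucial point is to group the factorial together with the power of $L(1)$: the hypothesis says precisely that $\frac{L(1)^n}{n!}v\in V_\Z$ for each $n\geq 0$. Moreover, since $v$ has finite weight and $L(1)$ strictly lowers weight while the weights of $V$ are bounded below, we have $L(1)^n v=0$ for all sufficiently large $n$, so the sum over $n$ is finite.

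Now I would apply the two structural facts about $W_\Z$. First, because $W_\Z$ is a $V_\Z$-submodule, for any $u\in V_\Z$ and $w\in W_\Z$ every coefficient of $Y(u,x)w$ lies in $W_\Z$; replacing $x$ by $x^{-1}$ only relabels powers of $x$, so the coefficients of $Y(u,x^{-1})w$ also lie in $W_\Z$. Applying this with $u=\frac{L(1)^n}{n!}v\in V_\Z$, each coefficient of $Y(\frac{L(1)^n}{n!}v,\,x^{-1})w$ lies in $W_\Z$. The scalar $(-1)^k$ and the power shifts $x^{n-2k}$ contribute only signs and exponent shifts, and the sum over $n$ is finite, so every coefficient of $Y^o_W(v,x)w$ is a $\Z$-linear combination of elements of $W_\Z$, hence lies in $W_\Z$. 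Extending by $\Z$-linearity over the homogeneous components of $v$ completes the reduction, and pairing with $w'\in W'_\Z$ then gives the desired integrality.

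I expect the main obstacle to be conceptual rather than computational: the factors $e^{xL(1)}$ and $(-x^{-2})^{L(0)}$ appearing in $Y^o_W$ individually introduce non-integral scalars (the factorials $1/n!$), so naively the opposite vertex operator need not preserve $W_\Z$. The whole point is that these factorials are absorbed exactly by the hypothesis that $V_\Z$ is preserved by the operators $\frac{L(1)^n}{n!}$, while the $(-x^{-2})^{L(0)}$ factor is harmless because it produces only the integral sign $(-1)^k$ together with a shift in the power of $x$. Identifying this grouping, and verifying that the relevant sums are finite so that the argument is well-defined, is the essential step.
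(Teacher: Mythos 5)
Your argument is correct, and it is the natural (and essentially the only) way to prove this: reduce to showing $Y^o_W(v,x)$ preserves $W_\Z$, expand $e^{xL(1)}(-x^{-2})^{L(0)}v$ for homogeneous $v$ so that the factorials are absorbed into the hypothesis $\frac{L(1)^n}{n!}v\in V_\Z$, and note the sum over $n$ terminates. The paper itself does not prove this proposition but cites \cite{M2} (and Lemmas 6.1--6.2 of \cite{DG}), where the argument is the same as yours.
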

\begin{propo}
 If $V_\mathbb{Z}$ is generated by vectors $v$ such that $L(1)v=0$, then 
$V_\mathbb{Z}$ is preserved by $\frac{L(1)^n}{n!}$ for $n\geq 0$.
\end{propo}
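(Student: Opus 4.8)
The plan is to reduce $V_\Z$ to a spanning set of iterated vertex algebra products of homogeneous generators and then to commute the divided power $\frac{L(1)^n}{n!}$ past each mode using a single combinatorial identity. First I would replace the given generating set by the set of its homogeneous components. Since $V_\Z$ is compatible with the conformal weight grading, the homogeneous components of any $v\in V_\Z$ again lie in $V_\Z$; and because $L(1)$ lowers weight by $1$, the relation $L(1)v=0$ forces $L(1)$ to annihilate each homogeneous component separately. These components still generate $V_\Z$ (the subring they generate contains each original $v$ as a finite sum, hence contains all of $V_\Z$, and is contained in $V_\Z$), so I may assume every generator $v$ is homogeneous, say of integer weight $h$, with $L(1)v=0$. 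I would also invoke the standard fact --- integral because the coefficients in the iterate formula are binomial coefficients --- that $V_\Z$ is the $\Z$-span of elements $v^{(1)}_{m_1}\cdots v^{(k)}_{m_k}\mathbf 1$ with each $v^{(i)}$ a generator.

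The key computation is the commutator of $L(1)$ with a single mode of a generator. From the standard formula $[L(1),Y(v,x)]=x^2\frac{d}{dx}Y(v,x)+2xY(L(0)v,x)+Y(L(1)v,x)$, the conditions $L(0)v=hv$ and $L(1)v=0$ give $[L(1),v_m]=(2h-m-2)\,v_{m+1}$, and hence $(\operatorname{ad}L(1))^j(v_m)=(2h-m-2)(2h-m-3)\cdots(2h-m-1-j)\,v_{m+j}$, a falling factorial of $j$ integer factors times $v_{m+j}$. Feeding this into the elementary operator identity $L(1)^n v_m=\sum_{j=0}^n\binom{n}{j}(\operatorname{ad}L(1))^j(v_m)\,L(1)^{n-j}$ and dividing by $n!$ yields the crucial formula
\[
 \frac{L(1)^n}{n!}\,v_m w=\sum_{j=0}^n\binom{2h-m-2}{j}\,v_{m+j}\,\frac{L(1)^{n-j}}{(n-j)!}\,w .
\]
The point is that the falling factorial divided by $j!$ collapses to the integer binomial coefficient $\binom{2h-m-2}{j}$, which is integral precisely because $h$ and $m$ are integers; this is exactly where the divided powers (rather than bare powers of $L(1)$) are needed.

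With this formula in hand I would finish by induction on the number $k$ of modes in a spanning element $v^{(1)}_{m_1}\cdots v^{(k)}_{m_k}\mathbf 1$. The base case $k=0$ is immediate, since $L(1)\mathbf 1=0$ gives $\frac{L(1)^n}{n!}\mathbf 1=\delta_{n,0}\mathbf 1\in V_\Z$. For the inductive step, apply the displayed formula with $v=v^{(1)}$, $m=m_1$, and $w=v^{(2)}_{m_2}\cdots v^{(k)}_{m_k}\mathbf 1$: each $\frac{L(1)^{n-j}}{(n-j)!}w$ lies in $V_\Z$ by the inductive hypothesis, each coefficient $\binom{2h-m_1-2}{j}$ is an integer, and $V_\Z$ is closed under the modes $v^{(1)}_{m_1+j}$ of the generator $v^{(1)}$, so the whole sum lies in $V_\Z$. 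I expect the main obstacle to be bookkeeping rather than conceptual: the essential ideas are the commutator identity and the collapse of the falling factorial to a binomial coefficient, and the one genuinely technical input is the integral spanning statement for the vertex subring generated by a set, which I would either cite from \cite{M2} or reprove from the integrality of the iterate-formula coefficients.
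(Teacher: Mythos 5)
Your proof is correct. The paper does not actually reprove this proposition (it is imported from \cite{M2}, with a pointer to Lemma 6.2 of \cite{DG}), but your argument --- reducing to homogeneous generators killed by $L(1)$, the commutator computation $[L(1),v_m]=(2h-m-2)v_{m+1}$, the collapse of the divided falling factorial to the integral binomial coefficient $\binom{2h-m-2}{j}$, and induction on the length of the integral spanning monomials $v^{(1)}_{m_1}\cdots v^{(k)}_{m_k}\mathbf{1}$ --- is essentially the standard argument given in those references, with all the key integrality points (integer weights, integer modes, integrality of the spanning set via the iterate formula) correctly identified.
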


\begin{rema}
 If an integral form $V_\Z$ is invariant under the operators $\frac{L(1)^n}{n!}$ for $n\geq 0$, then in fact $V_\Z$ is invariant under the integral form of the universal enveloping algebra of $\mathfrak{sl}_2=\mathrm{span}\lbrace L(-1), L(0), L(1)\rbrace$ with basis given by the ordered products
 \begin{equation*}
  \dfrac{L(-1)^k}{k!}\cdot\binom{L(0)}{m}\cdot\dfrac{L(1)^n}{n!}
 \end{equation*}
where $k,m,n\geq 0$. Note that $V_\Z$ is automatically invariant under the operators $\frac{L(-1)^k}{k!}$ because $e^{xL(-1)}v=Y(v,x)\mathbf{1}$ for any $v\in V_\Z$ and $V_\Z$ is invariant under the operaters $\binom{L(0)}{m}$ by compatibility of $V_\Z$ with the weight gradation. But note that we cannot expect all of these operators to preserve integral forms of $V$-modules because for instance $V$-modules may not be graded by integers.
\end{rema}

Sometimes a vertex operator algebra $V$ is equivalent as a $V$-module to its 
contragredient $V'$. From \cite{FHL}, this is the case if and only if $V$ has a 
nondegenerate bilinear form $(\cdot,\cdot)$ that is \textit{invariant} in the 
sense that
\begin{equation*}
 (Y(u,x)v,w)=(v,Y^o(u,x)w)
\end{equation*}
for all $u,v,w\in V$. From \cite{Li}, we know that the space of (not 
necessarily 
nondegenerate) invariant bilinear forms on $V$ is linearly isomorphic to 
$V_0/L(1)V_1$, where as usual $V_n$ for $n\in\Z$ represents the weight space 
with $L(0)$-eigenvalue $n$. If $V$ is a simple vertex operator algebra, then 
any non-zero invariant bilinear form is necessarily nondegenerate.

\section{Integral intertwining operators}
For any vector space or $\mathbb{Z}$-module $V$, let $V\lbrace x\rbrace$ 
denote 
the space of formal series with complex powers and coefficients in $V$. We 
recall the definition of intertwining operator among a triple of modules for a 
vertex operator algebra (see for instance \cite{FHL}):
\begin{defi}
 Suppose $V$ is a vertex operator algebra and $W^{(1)}$,  $W^{(2)}$ and 
$W^{(3)}$ are $V$-modules. An \textit{intertwining operator} of type 
$\binom{W^{(3)}}{W^{(1)}\,W^{(2)}}$ is a linear map
 \begin{eqnarray*}
\mathcal{Y}: W^{(1)}\otimes W^{(2)}&\to& W^{(3)}\{x\},
\\w_{(1)}\otimes w_{(2)}&\mapsto &\mathcal{Y}(w_{(1)},x)w_{(2)}=\sum_{n\in
{\mathbb C}}(w_{(1)})_n
w_{(2)}x^{-n-1}\in W^{(3)}\{x\}
\end{eqnarray*}
satisfying the
following conditions:
\begin{enumerate}

\item  {\it Lower truncation}: For any $w_{(1)}\in W^{(1)}$, $w_{(2)}\in 
W^{(2)}$ and $n\in\mathbb{C}$,
\begin{equation*}\label{log:ltc}
(w_{(1)})_{n+m}w_{(2)}=0\;\;\mbox{ for }\;m\in {\mathbb
N} \;\mbox{ sufficiently large.}
\end{equation*}

\item The {\it Jacobi identity}:
\begin{align}\label{intwopjac}
 x^{-1}_0\delta \left(\frac{x_1-x_2}{x_0}\right) 
Y_{W^{(3)}}(v,x_1)\mathcal{Y}(w_{(1)},x_2) & - 
x^{-1}_0\delta\left(\frac{x_2-x_1}{-x_0}\right)\mathcal{Y}(w_{(1)},x_2)Y_{W^{(2)
}}(v,x_1)\nonumber\\
& =  x^{-1}_2\delta \left(\frac{x_1-x_0}{x_2}\right) 
\mathcal{Y}(Y_{W^{(1)}}(v,x_0)w_{(1)},x_2)
\end{align}
for $v\in V$ and $w_{(1)}\in W^{(1)}$.

\item The  $L(-1)$-derivative property: for any
$w_{(1)}\in W^{(1)}$,
\begin{equation}\label{intwopderiv}
\mathcal{Y}(L(-1)w_{(1)},x)=\frac{d}{dx}\mathcal{Y}(w_{(1)},x).
\end{equation}
\end{enumerate}
\end{defi}
\begin{rema}
 We use $V^{W^{(3)}}_{W^{(1)} W^{(2)}}$ to denote the vector space of 
intertwining operators of type $\binom{W^{(3)}}{W^{(1)}\,W^{(2)}}$, and the 
dimension of $V^{W^{(3)}}_{W^{(1)} W^{(2)}}$ is the \textit{fusion rule} 
$N^{W^{(3)}}_{W^{(1)} W^{(2)}}$
\end{rema}
\begin{rema}
 Note that if $W$ is a $V$-module, the vertex operator $Y_W$ is an 
intertwining 
operator of type $\binom{W}{V\,W}$. The Jacobi identity 
(\ref{intwopjac}) with $v=\omega$ implies that for $w_{(1)}\in W^{(1)}$ and 
$w_{(2)}\in W^{(2)}$ both homogeneous,
 \begin{equation}\label{intwopdeg}
  \mathrm{wt}\,(w_{(1)})_n w_{(2)}=\mathrm{wt}\,w_{(1)}+\mathrm{wt}\,w_{(2)}-n-1
 \end{equation}
for any $n\in\mathbb{C}$.
\end{rema}
\begin{rema}
 If $W^{(1)}$,  $W^{(2)}$ and $W^{(3)}$ are irreducible $V$-modules, then 
there 
are complex numbers $h_i$ for $i=1,2,3$ such that the conformal weights of 
$W^{(i)}$ are contained in $h_i+\mathbb{N}$ for each $i$. If $\mathcal{Y}$ is 
an 
intertwining operator of type $\binom{W^{(3)}}{W^{(1)}\,W^{(2)}}$ and we set 
 \begin{equation*}
  h=h_1+h_2-h_3,
 \end{equation*}
then (\ref{intwopdeg}) implies that we can write
\begin{equation}\label{homogeneousops}
 \mathcal{Y}(w_{(1)},x)w_{(2)}=\sum_{n\in\mathbb{Z}} w_{(1)}(n)w_{(2)}\,x^{-n-h}
\end{equation}
for any $w_{(1)}\in W^{(1)}$ and $w_{(2)}\in W^{(2)}$, where 
$w_{(1)}(n)=(w_{(1)})_{n+h-1}$ for $n\in\mathbb{Z}$. In particular, for 
$w_{(1)}\in W^{(1)}_{h_1}$ and $W^{(2)}_{h_2}$, note that 
$w_{(1)}(0)w_{(2)}\in W^{(3)}_{h_3}$.
\end{rema}

The Jacobi identity for intertwining operators, like the Jacobi identity for 
vertex algebras and modules, implies commutator and iterate formulas. We will 
in 
particular need the iterate formula: for any $v\in V$, $w_{(1)}\in W^{(1)}$, 
and 
$n\in\mathbb{Z}$,
\begin{align}\label{assocint}
 \mathcal{Y}(v_n w_{(1)},x_2)  = & \,\mathrm{Res}_{x_1}(x_1-x_2)^n 
Y_{W^{(3)}}(v,x_1)\mathcal{Y}(w_{(1)},x_2)\nonumber\\
& -\mathrm{Res}_{x_1}(-x_2+x_1)^n 
\mathcal{Y}(w_{(1)},x_2)Y_{W^{(2)}}(v,x_1).
\end{align}
We will also need weak commutativity for intertwining operators, whose proof is 
exactly the same as the proof of weak commutativity for algebras and modules 
(Propositions 3.2.1 and 4.2.1 in \cite{LL}):
\begin{propo}
 Suppose $W^{(1)}$,  $W^{(2)}$ and $W^{(3)}$ are $V$-modules and 
$\mathcal{Y}\in 
V^{W^{(3)}}_{W^{(1)} W^{(2)}}$. Then for any positive integer $k$ such that 
$v_n 
w_{(1)}=0$ for $n\geq k$,
\begin{equation*}
 (x_1-x_2)^k 
Y_{W^{(3)}}(v,x_1)\mathcal{Y}(w_{(1)},x_2)=(x_1-x_2)^k\mathcal{Y}(w_{(1)},x_2)Y_
{W^{(2)}}(v,x_1).
\end{equation*}
\end{propo}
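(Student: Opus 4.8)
The plan is to extract weak commutativity directly from the Jacobi identity \eqref{intwopjac}, in the same way one does for vertex algebras and modules. First I would multiply both sides of \eqref{intwopjac} by $x_0^k$ and apply $\mathrm{Res}_{x_0}$. The two delta-function terms on the left-hand side are handled by the standard formal residue computations
\begin{equation*}
\mathrm{Res}_{x_0}\,x_0^{k}\,x_0^{-1}\delta\!\left(\frac{x_1-x_2}{x_0}\right)=(x_1-x_2)^k=\mathrm{Res}_{x_0}\,x_0^{k}\,x_0^{-1}\delta\!\left(\frac{x_2-x_1}{-x_0}\right),
\end{equation*}
which follow by expanding each delta function as a series $\sum_{n\in\Z}(\cdots)^n x_0^{-n-1}$ and selecting the coefficient of $x_0^{-1}$; the equality of the two expressions uses that $k$ is a nonnegative integer, so that $(x_1-x_2)^k$ and $(x_2-x_1)^k$ are genuine polynomials and the expansion convention is immaterial. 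Taking $\mathrm{Res}_{x_0}$ of the left-hand side of \eqref{intwopjac} times $x_0^k$ therefore produces exactly
\begin{equation*}
(x_1-x_2)^k Y_{W^{(3)}}(v,x_1)\mathcal{Y}(w_{(1)},x_2)-(x_1-x_2)^k\mathcal{Y}(w_{(1)},x_2)Y_{W^{(2)}}(v,x_1),
\end{equation*}
the difference of the two sides of the claimed identity.

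It then remains to show that the residue of the right-hand side of \eqref{intwopjac} times $x_0^k$ vanishes, and this is where the hypothesis enters. The key observation is that, since $v_n w_{(1)}=0$ for all $n\geq k$, the series $x_0^k Y_{W^{(1)}}(v,x_0)w_{(1)}=\sum_{n\leq k-1}(v_n w_{(1)})\,x_0^{k-n-1}$ involves only nonnegative powers of $x_0$. At the same time, the delta function $x_2^{-1}\delta\!\left(\frac{x_1-x_0}{x_2}\right)$, expanded in the binomial convention in nonnegative powers of $x_0$, also involves only nonnegative powers of $x_0$. Hence the entire right-hand side of \eqref{intwopjac}, after multiplication by $x_0^k$, is a series in nonnegative powers of $x_0$, so its coefficient of $x_0^{-1}$---that is, $\mathrm{Res}_{x_0}$---is zero. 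Equating the two residues yields the proposition.

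The computation is routine once the conventions are fixed, and it mirrors the proofs of Propositions 3.2.1 and 4.2.1 in \cite{LL}; the only points requiring care are the bookkeeping of signs in the two formal delta functions and the verification that the iterate term contributes only nonnegative powers of $x_0$. I expect this last verification to be the crux, since it is precisely the choice of $k$ with $v_n w_{(1)}=0$ for $n\geq k$ that makes $x_0^k Y_{W^{(1)}}(v,x_0)w_{(1)}$ a polynomial in $x_0$ with no negative powers, and this is what forces the right-hand side to drop out under $\mathrm{Res}_{x_0}$.
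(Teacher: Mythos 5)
Your proposal is correct and follows essentially the same route as the paper: multiply the Jacobi identity \eqref{intwopjac} by $x_0^k$, take $\mathrm{Res}_{x_0}$ to turn the two delta-function terms into $(x_1-x_2)^k$ times the respective products, and observe that $x_0^k\,\mathcal{Y}(Y_{W^{(1)}}(v,x_0)w_{(1)},x_2)$ contains no negative powers of $x_0$ by the choice of $k$, so the residue of the right-hand side vanishes. The sign bookkeeping you flag (that $(-1)^k(x_2-x_1)^k=(x_1-x_2)^k$ for $k$ a positive integer) is handled correctly.
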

\begin{proof}
 Multiply the Jacobi identity (\ref{intwopjac}) by $x_0^k$ and extract the 
coefficient of $x_0^{-1}$, obtaining
 \begin{align*}
 (x_1-x_2)^k Y_{W^{(3)}}(v,x_1)\mathcal{Y}(w_{(1)},x_2) & 
-(x_1-x_2)^k\mathcal{Y}(w_{(1)},x_2)Y_{W^{(2)}}(v,x_1)\\
 & =\mathrm{Res}_{x_0}\,x_0^k x^{-1}_2\delta\left(\frac{x_1-x_0}{x_2}\right) 
\mathcal{Y}(Y_{W^{(1)}}(v,x_0)w_{(1)},x_2).
\end{align*}
Since $v_n w_{(1)}=0$ for $n\geq k$, the right side contains no negative powers 
of $x_0$, so the residue is $0$.
\end{proof}

We will need the following proposition, which is similar to Proposition 4.5.8 
in 
\cite{LL} and uses essentially the same proof:
\begin{propo}\label{commform}
 Suppose $W^{(1)}$, $W^{(2)}$, and $W^{(3)}$ are $V$-modules and 
$\mathcal{Y}\in 
V^{W^{(3)}}_{W^{(1)} W^{(2)}}$. Then for any $v\in V$, $w_{(1)}\in W^{(1)}$, 
and 
$w_{(2)}\in W^{(2)}$, and for any $p\in\mathbb{C}$ and $q\in\mathbb{Z}$, 
$(w_{(1)})_p v_q w_{(2)}$ is an integral linear combination of terms of the 
form 
$v_r (w_{(1)})_s w_{(2)}$. Specifically, let $k$ and $m$ be nonnegative 
integers 
such that $v_n w_{(1)}=0$ for $n\geq k$ and $v_n w_{(2)}=0$ for $n>m+q$. Then
 \begin{equation*}
 (w_{(1)})_p v_q w_{(2)}=\sum_{i=0}^{m}\sum_{j=0}^{k} 
(-1)^{i+j}\binom{-k}{i}\binom{k}{j} v_{q+i+j} (w_{(1)})_{p-i-j}  w_{(2)}.
 \end{equation*}
\end{propo}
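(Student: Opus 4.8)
The plan is to derive the explicit formula from weak commutativity for intertwining operators (the preceding proposition), following the strategy of Proposition 4.5.8 in \cite{LL}; the integrality of the coefficients is then immediate since $(-1)^{i+j}\binom{-k}{i}\binom{k}{j}\in\Z$. The two orderings correspond to the two ``mixed'' products $H(x_1,x_2)=\mathcal{Y}(w_{(1)},x_2)Y_{W^{(2)}}(v,x_1)w_{(2)}$, whose coefficients are the terms $(w_{(1)})_s v_r w_{(2)}$, and $G(x_1,x_2)=Y_{W^{(3)}}(v,x_1)\mathcal{Y}(w_{(1)},x_2)w_{(2)}$, whose coefficients are the desired terms $v_r(w_{(1)})_s w_{(2)}$. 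First I would record that $(w_{(1)})_p v_q w_{(2)}=\mathrm{Res}_{x_1}\mathrm{Res}_{x_2}\,x_1^q x_2^p\, H$, and observe the crucial asymmetry between $H$ and $G$: because the $x_1$-dependence of $H$ comes only from $Y_{W^{(2)}}(v,x_1)w_{(2)}$ and $v_n w_{(2)}=0$ for $n>m+q$, the series $H$ is lower truncated in $x_1$ uniformly in $x_2$, with every power of $x_1$ at least $-m-q-1$; the series $G$, by contrast, is lower truncated in $x_1$ only for each fixed power of $x_2$, not uniformly.

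The key step is to insert the resolution $(x_2-x_1)^{-k}(x_2-x_1)^k=1$ --- with $(x_2-x_1)^{-k}$ expanded in nonnegative powers of $x_1$ --- into the expression for $H$. This is legitimate precisely because $H$ is uniformly lower truncated in $x_1$, so multiplication by this expansion of $(x_2-x_1)^{-k}$ is well defined and inverse to multiplication by the polynomial $(x_2-x_1)^k$. I would then apply weak commutativity in the form $(x_2-x_1)^k H=(x_2-x_1)^k G$ (the preceding proposition multiplied by $(-1)^k$, valid since $v_n w_{(1)}=0$ for $n\geq k$), obtaining $(w_{(1)})_p v_q w_{(2)}=\mathrm{Res}_{x_1}\mathrm{Res}_{x_2}\,x_1^q x_2^p\,(x_2-x_1)^{-k}(x_2-x_1)^k G$. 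At this point one must resist collapsing $(x_2-x_1)^{-k}(x_2-x_1)^k$ to $1$: since $G$ is \emph{not} uniformly lower truncated in $x_1$, multiplying by $(x_2-x_1)^k$ and then by the expansion of $(x_2-x_1)^{-k}$ does not cancel, and it is exactly this failure to cancel that turns the single term $v_q(w_{(1)})_p w_{(2)}$ into the genuine double sum.

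To finish I would expand $(x_2-x_1)^{-k}=\sum_{i\geq 0}\binom{-k}{i}(-1)^i x_1^i x_2^{-k-i}$ and $(x_2-x_1)^k=\sum_{j=0}^k\binom{k}{j}(-1)^j x_1^j x_2^{k-j}$, multiply, and read off residues: the resulting monomial $x_1^{i+j}x_2^{-i-j}$ forces the index of $v$ to be $q+i+j$ and that of $w_{(1)}$ to be $p-i-j$, giving the summand $(-1)^{i+j}\binom{-k}{i}\binom{k}{j}v_{q+i+j}(w_{(1)})_{p-i-j}w_{(2)}$. The truncation of the $i$-sum at $m$ appears automatically: writing $(x_2-x_1)^k G=(x_2-x_1)^k H=\sum_{r'\leq m+q}B_{r'}(x_2)x_1^{-r'-1}$, the residue in $x_1$ selects $r'=q+i$, which satisfies $r'\leq m+q$ only when $i\leq m$, so all terms with $i>m$ vanish. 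The main obstacle is not the combinatorics but this formal-series bookkeeping --- keeping track of which series is uniformly lower truncated in which variable, expanding $(x_2-x_1)^{-k}$ in the correct region, and justifying that the resolution of the identity may be inserted against $H$ but must not be freely cancelled against $G$.
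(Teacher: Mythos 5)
Your proposal is correct and follows essentially the same route as the paper's proof: represent $(w_{(1)})_p v_q w_{(2)}$ as an iterated residue of $x_1^q x_2^p H$, insert the resolution $(x_2-x_1)^{-k}(x_2-x_1)^k$ (well defined against $H$ by uniform lower truncation in $x_1$), replace $(x_2-x_1)^k H$ by $(x_2-x_1)^k G$ via weak commutativity without cancelling against $G$, and truncate the $i$-sum at $m$ using the bound $v_n w_{(2)}=0$ for $n>m+q$. The only differences are cosmetic: you expand $(x_2-x_1)^{\pm k}$ directly so that no final reindexing $j\mapsto k-j$ is needed, whereas the paper works with $(-x_2+x_1)^{-k}$ and $(x_1-x_2)^k$ and reindexes at the end.
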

\begin{proof}
 From weak commutativity, we have
 \begin{align}\label{weakcommcalc}
  (w_{(1)})_p v_q w_{(2)} & =\mathrm{Res}_{x_1}\mathrm{Res}_{x_2} x_1^q x_2^p 
\mathcal{Y}(w_{(1)},x_2)Y_{W^{(2)}}(v,x_1)w_{(2)}\nonumber\\
  & = \mathrm{Res}_{x_1}\mathrm{Res}_{x_2} x_1^q x_2^p (-x_2+x_1)^{-k} 
[(x_1-x_2)^{k} \mathcal{Y}(w_{(1)},x_2)Y_{W^{(2)}}(v,x_1)w_{(2)}]\nonumber\\
  & =\mathrm{Res}_{x_1}\mathrm{Res}_{x_2} x_1^q x_2^p (-x_2+x_1)^{-k} 
[(x_1-x_2)^{k}Y_{W^{(3)}}(v,x_1)\mathcal{Y}(w_{(1)},x_2)w_{(2)}].
 \end{align}
These formal expressions are well defined because $Y_{W^{(2)}}(v,x_1)w_{(2)}$ 
is 
lower-truncated, but we cannot remove the brackets from the last expression in 
(\ref{weakcommcalc}). We observe that the term in brackets in 
(\ref{weakcommcalc}) may be written explicitly as
\begin{equation*}
 \sum_{j\in\mathbb{N},\,m\in\mathbb{Z},\,n\in\mathbb{C}} (-1)^j\binom{k}{j}  
v_m 
(w_{(1)})_n w_{(2)} x_1^{k-j-m-1} x_2^{j-n-1}.
\end{equation*}
Meanwhile, only a finite truncation of $x_1^q x_2^p (-x_2+x_1)^{-k}$ 
contributes 
to the residue since $Y_{W^{(2)}}(v,x_1)w_{(2)}$ is lower-truncated. In 
particular, the lowest possible integral power in $x_1^q 
Y_{W^{(2)}}(v,x_1)w_{(2)}$ with a non-zero coefficient is $x_2^{-m-1}$ by 
definition of $m$. Thus we can 
take
\begin{equation*}
 \sum_{i=0}^m (-1)^{k+i}\binom{-k}{i} x_1^{q+i}x_2^{p-k-i}
\end{equation*}
as our truncation of $x_1^q x_2^p (-x_2+x_1)^{-k}$. Then
\begin{align*}
  ( & w_{(1)})_p v_q  w_{(2)}\\
 & =\mathrm{Res}_{x_1,x_2}\sum_{i,j,m,n} (-1)^{k+i+j}\binom{-k}{i}\binom{k}{j} 
v_m (w_{(1)})_n  w_{(2)} x_1^{q+k+i-j-m-1} x_2^{p-k-i+j-n-1}\\
 & =\sum_{i=0}^m\sum_{j=0}^k (-1)^{k+i+j}\binom{-k}{i}\binom{k}{j} v_{q+k+i-j} 
(w_{(1)})_{p-k-i+j}  w_{(2)}\\
 & =\sum_{i=0}^m\sum_{j=0}^k (-1)^{i+j}\binom{-k}{i}\binom{k}{j} v_{q+i+j} 
(w_{(1)})_{p-i-j} w_{(2)},
\end{align*}
where we have changed $j$ to $k-j$ in the last equality.
\end{proof}

 The Jacobi identity for intertwining operators makes sense in the vertex ring 
context for the same reasons that the Jacobi identity for algebras and modules 
does. However, the $L(-1)$-derivative property may not make sense. If 
$V_\mathbb{Z}$ is a vertex ring without a conformal vector $\omega$, then there 
may be no $L(-1)$ operator on $V_\mathbb{Z}$-modules. Additionally, since 
intertwining operators involve complex powers of $x$, the coefficients of the 
derivative of an intertwining operator may not make sense as maps from a 
$V_\mathbb{Z}$-module into a $V_\mathbb{Z}$-module.
 
 However, suppose $V$ is a vertex operator algebra with integral form 
$V_\mathbb{Z}$ and $W^{(1)}$, $W^{(2)}$, and $W^{(3)}$ are $V$-modules with 
integral forms $W^{(1)}_\mathbb{Z}$, $W^{(2)}_\mathbb{Z}$, and 
$W^{(3)}_\mathbb{Z}$, respectively:
 \begin{defi}
  An intertwining operator $\mathcal{Y}\in 
V^{W^{(3)}}_{W^{(1)} W^{(2)}}$ is \textit{integral} with respect to 
$W^{(1)}_\mathbb{Z}$, $W^{(2)}_\mathbb{Z}$, and $W^{(3)}_\mathbb{Z}$ if for any 
$w_{(1)}\in W^{(1)}_\mathbb{Z}$ and $w_{(2)}\in W^{(2)}_\mathbb{Z}$,
 \begin{equation*}
  \mathcal{Y}(w_{(1)},x)w_{(2)}\in W^{(3)}_\mathbb{Z}\lbrace x\rbrace.
 \end{equation*}
 \end{defi}
 \begin{rema}
Note that whether an intertwining operator is integral or not will generally 
depend on the integral forms used for the three $V$-modules. 
\end{rema}
The main result of this section is the following theorem which reduces the 
problem of showing that an intertwining operator is integral to the problem of 
showing that it is integral when restricted to generators of $W^{(1)}$ and 
$W^{(2)}$:
\begin{theo}\label{intwopgen}
 Suppose $V$ is a vertex operator algebra with integral form $V_\mathbb{Z}$ and 
$W^{(1)}$, $W^{(2)}$, and $W^{(3)}$ are $V$-modules with integral forms 
$W^{(1)}_\mathbb{Z}$, $W^{(2)}_\mathbb{Z}$, and $W^{(3)}_\mathbb{Z}$, 
respectively. Moreover, suppose $T^{(1)}$ and $T^{(2)}$ are generating sets for 
$W^{(1)}_\mathbb{Z}$ and $W^{(2)}_\mathbb{Z}$, respectively. If $\mathcal{Y}\in 
V^{W^{(3)}}_{W^{(1)} W^{(2)}}$ satisfies
 \begin{equation*}
  \mathcal{Y}(t_{(1)},x)t_{(2)}\in W^{(3)}_\mathbb{Z}\lbrace x\rbrace
 \end{equation*}
for all $t_{(1)}\in T^{(1)}$, $t_{(2)}\in T^{(2)}$, then $\mathcal{Y}$ is 
integral with respect to $W^{(1)}_\mathbb{Z}$, $W^{(2)}_\mathbb{Z}$, and 
$W^{(3)}_\mathbb{Z}$.
\end{theo}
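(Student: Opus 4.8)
The plan is to build the integral forms $W^{(1)}_\Z$ and $W^{(2)}_\Z$ up from the generating sets one mode at a time, treating the two slots with two different tools. Since $W^{(1)}_\Z$ is the smallest $V_\Z$-submodule of $W^{(1)}$ containing $T^{(1)}$, every element of $W^{(1)}_\Z$ is a finite $\Z$-linear combination of monomials $u^{(1)}_{m_1}\cdots u^{(a)}_{m_a}t_{(1)}$ with $u^{(i)}\in V_\Z$, $m_i\in\Z$, and $t_{(1)}\in T^{(1)}$, and likewise for $W^{(2)}_\Z$. As $\mathcal{Y}$ is $\Z$-linear in each variable and $W^{(3)}_\Z$ is a $\Z$-module, it is enough to prove $\mathcal{Y}(w_{(1)},x)w_{(2)}\in W^{(3)}_\Z\{x\}$ for such monomials; I would induct on the numbers of modes $a$ and $b$ appearing in $w_{(1)}$ and $w_{(2)}$.

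First I would dispose of the second slot. Fixing a generator $t_{(1)}\in T^{(1)}$, I claim by induction on $b$ that $\mathcal{Y}(t_{(1)},x)w_{(2)}\in W^{(3)}_\Z\{x\}$ whenever $w_{(2)}$ is obtained from some $t_{(2)}\in T^{(2)}$ by applying at most $b$ modes of $V_\Z$; the case $b=0$ is the hypothesis. For the inductive step, write $w_{(2)}=v_q w_{(2)}'$ with $v\in V_\Z$, $q\in\Z$, and use Proposition~\ref{commform} to express each coefficient $(t_{(1)})_p v_q w_{(2)}'$ as the integral combination $\sum_{i,j}(-1)^{i+j}\binom{-k}{i}\binom{k}{j}v_{q+i+j}(t_{(1)})_{p-i-j}w_{(2)}'$. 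By the inductive hypothesis the vectors $(t_{(1)})_{p-i-j}w_{(2)}'$ lie in $W^{(3)}_\Z$, the modes $v_{q+i+j}$ preserve $W^{(3)}_\Z$ because $W^{(3)}_\Z$ is a $V_\Z$-module, and the binomial coefficients are integers, so the finite sum stays in $W^{(3)}_\Z$. Extending by $\Z$-linearity in the second variable then gives $\mathcal{Y}(t_{(1)},x)w_{(2)}\in W^{(3)}_\Z\{x\}$ for every $t_{(1)}\in T^{(1)}$ and every $w_{(2)}\in W^{(2)}_\Z$.

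Next I would handle the first slot by induction on $a$, proving the strengthened statement that $\mathcal{Y}(w_{(1)},x)w_{(2)}\in W^{(3)}_\Z\{x\}$ holds for \emph{all} $w_{(2)}\in W^{(2)}_\Z$; its base case $a=0$ is exactly the conclusion of the previous paragraph. For the step, write $w_{(1)}=v_n w_{(1)}'$ and expand $\mathcal{Y}(v_n w_{(1)}',x_2)w_{(2)}$ via the iterate formula \eqref{assocint}. In the first term, the integer binomial expansion of $(x_1-x_2)^n$ together with $\mathrm{Res}_{x_1}Y_{W^{(3)}}(v,x_1)$ becomes, coefficientwise and finitely by lower truncation, an integral combination of modes $v_m$ applied to the coefficients of $\mathcal{Y}(w_{(1)}',x_2)w_{(2)}$, which lie in $W^{(3)}_\Z$ by the inductive hypothesis and are preserved by $v_m$. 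In the second term, $Y_{W^{(2)}}(v,x_1)w_{(2)}$ has coefficients $v_\ell w_{(2)}$ that again lie in $W^{(2)}_\Z$, and lower truncation makes $\mathrm{Res}_{x_1}$ a finite integral combination of the coefficients of $\mathcal{Y}(w_{(1)}',x_2)(v_\ell w_{(2)})$; these are integral by the inductive hypothesis applied to the same $w_{(1)}'$ and the new second vector $v_\ell w_{(2)}\in W^{(2)}_\Z$. Hence both terms are integral and the induction closes; combining the two inductions with bilinearity proves the theorem.

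The one genuine subtlety to highlight is precisely why the two slots demand different arguments: the iterate formula lowers the number of modes on $w_{(1)}$ only at the price of applying an extra mode to $w_{(2)}$, so the induction on $a$ must be carried out for arbitrary $w_{(2)}\in W^{(2)}_\Z$ rather than for generators alone, and its base case must therefore already assert integrality of $\mathcal{Y}(t_{(1)},x)w_{(2)}$ against all of $W^{(2)}_\Z$. This is exactly why the second slot has to be cleared first, using Proposition~\ref{commform} to commute $v_q$ past the generator mode $(t_{(1)})_p$. The remaining work is the routine verification that every binomial coefficient that appears is an integer and that every mode $v_m$ with $v\in V_\Z$ preserves the relevant integral forms.
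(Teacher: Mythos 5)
Your proof is correct and is essentially the paper's own argument: the paper phrases each of your two inductions as the statement that the set of ``good'' vectors is a $V_\Z$-submodule containing the generating set (hence equal to the whole integral form), but the substance is identical --- Proposition~\ref{commform} clears the second slot first, and the iterate formula \eqref{assocint} then clears the first slot against all of $W^{(2)}_\Z$. Your closing remark about why the two slots require different tools and why the second must be handled first matches the paper's own remark on the role of Proposition~\ref{commform}.
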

\begin{proof}
 First let $U^{(2)}_\mathbb{Z}$ be the sublattice of $W^{(2)}_\mathbb{Z}$ 
consisting of vectors $u_{(2)}$ such that
 \begin{equation*}
  \mathcal{Y}(t_{(1)},x)u_{(2)}\in W^{(3)}_\mathbb{Z}\lbrace x\rbrace
 \end{equation*}
for all $t_{(1)}\in T^{(1)}$. Note that if $u_{(2)}\in U^{(2)}_\mathbb{Z}$, 
$t_{(1)}\in T^{(1)}$, and $v\in V_\mathbb{Z}$, Proposition \ref{commform} 
implies that for any $p\in\mathbb{C}$ and $q\in\mathbb{Z}$, $(t_{(1)})_p v_q 
u_{(2)}$ is an integral linear combination of terms of the form $v_r 
(t_{(1)})_s 
u_{(2)}$, and is thus in $W^{(3)}_\mathbb{Z}$. This means that
\begin{equation*}
 Y_{W^{(2)}}(v,x)u_{(2)}\in U^{(2)}_\mathbb{Z}[[x,x^{-1}]],
\end{equation*}
for any $v\in V_\Z$, $u_{(2)}\in U^{(2)}_\Z$, that is, $U^{(2)}_\mathbb{Z}$ is 
a 
$V_\mathbb{Z}$-module. Since by hypothesis 
$U^{(2)}_\mathbb{Z}$ contains $T^{(2)}$ which generates $W^{(2)}_\mathbb{Z}$, 
$U^{(2)}_\mathbb{Z}=W^{(2)}_\mathbb{Z}$ and so
\begin{equation}\label{t2submod}
 \mathcal{Y}(t_{(1)},x)w_{(2)}\in W^{(3)}_\mathbb{Z}\lbrace x\rbrace
\end{equation}
for any $t_{(1)}\in T^{(1)}$ and $w_{(2)}\in W^{(2)}_\mathbb{Z}$.

Now we define $U^{(1)}_\mathbb{Z}$ as the sublattice of $W^{(1)}_\mathbb{Z}$ 
consisting of all vectors $u_{(1)}\in W^{(1)}_\mathbb{Z}$ such that
\begin{equation*}
 \mathcal{Y}(u_{(1)},x)w_{(2)}\in W^{(3)}_\mathbb{Z}\lbrace x\rbrace.
\end{equation*}
for all $w_{(2)}\in W^{(2)}_\mathbb{Z}$. In this case, $U^{(1)}_\mathbb{Z}$ is 
a 
$V_\mathbb{Z}$-submodule of 
$W^{(1)}_\mathbb{Z}$ by the iterate formula for intertwining operators, 
(\ref{assocint}). Since $U^{(1)}_\mathbb{Z}$ contains $T^{(1)}$ by 
(\ref{t2submod}), $U^{(1)}_\mathbb{Z}=W^{(1)}_\mathbb{Z}$, and we have shown 
that
\begin{equation*}
 \mathcal{Y}(w_{(1)},x)w_{(2)}\in W^{(3)}_\mathbb{Z}\lbrace x\rbrace
\end{equation*}
for any $w_{(1)}\in W^{(1)}_\mathbb{Z}$ and $w_{(2)}\in W^{(2)}_\mathbb{Z}$, 
that is, $\mathcal{Y}$ is integral with respect to $W^{(1)}_\mathbb{Z}$, 
$W^{(2)}_\mathbb{Z}$, and $W^{(3)}_\mathbb{Z}$.
\end{proof}

\begin{rema}
 Note the role of Proposition \ref{commform} in the proof of the theorem. In particular, the commutator formula for intertwining operators (obtained from the coefficient of $x_0^{-1}$ in the Jacobi identity \eqref{intwopjac}) is not enough by itself without additional assumptions on the generating set for $W^{(1)}_\Z$.
\end{rema}

We will apply Theorem \ref{intwopgen} to classify integral intertwining 
operators among modules for affine Lie algebra and lattice vertex operator 
algebras in the following sections. In both examples, the graded $\Z$-dual of 
a 
module integral form will also play a role. In the case of modules for lattice 
vertex operator algebras in Section \ref{sec:lattintwops}, we will need to 
realize the graded $\Z$-dual explicitly using intertwining operators and 
Proposition \ref{contpropo} below.

We need to recall the symmetries among spaces of intertwining operators from 
\cite{FHL}, \cite{HL1}, and \cite{HLZ2}. First, if  $W^{(1)}$,  $W^{(2)}$ and 
$W^{(3)}$ are $V$-modules and $\mathcal{Y}$ is an intertwining operator of 
type 
$\binom{W^{(3)}}{W^{(1)}\,W^{(2)}}$, then for any $r\in\mathbb{Z}$, there is an 
intertwining operator $\Omega_r(\mathcal{Y})$ of type 
$\binom{W^{(3)}}{W^{(2)}\,W^{(1)}}$ defined by
\begin{equation}\label{intwopskewsym}
 \Omega_r(\mathcal{Y})(w_{(2)},x)w_{(1)}=e^{x 
L(-1)}\mathcal{Y}(w_{(1)},e^{(2r+1)\pi i} x)w_{(2)}
\end{equation}
for $w_{(1)}\in W^{(1)}$ and $w_{(2)}\in W^{(2)}$. Moreover, for any 
$r\in\mathbb{Z}$ there is an intertwining operator $A_r(\mathcal{Y})$ of type 
$\binom{(W^{(2)})'}{W^{(1)}\,(W^{(3)})'}$ defined by
\begin{equation}\label{intwopcontra}
 \langle A_r(\mathcal{Y})(w_{(1)},x)w_{(3)}',w_{(2)}\rangle_{W^{(2)}}=\langle 
w_{(3)}',\mathcal{Y}^o_r(w_{(1)},x)w_{(2)}\rangle_{W^{(3)}}
\end{equation}
for $w_{(1)}\in W^{(1)}$, $w_{(2)}\in W^{(2)}$, and $w_{(3)}'\in (W^{(3)})'$, 
where
\begin{equation*}
 \mathcal{Y}^o_r(w_{(1)},x)w_{(2)}=\mathcal{Y}(e^{x L(1)} e^{(2r+1)\pi i 
L(0)}(x^{-L(0)})^2 w_{(1)}, x^{-1})w_{(2)}.
\end{equation*}
Then we have (see for example Propositions 3.44 and 3.46 in \cite{HLZ2}):
\begin{propo}\label{intwopsymprop}
 For any $r\in\mathbb{Z}$, the map $\Omega_r: V^{W^{(3)}}_{W^{(1)} 
W^{(2)}}\rightarrow V^{W^{(3)}}_{W^{(2)} W^{(1)}}$ is a linear isomorphism with 
inverse $\Omega_{-r-1}$. Moreover, the map $A_r: V^{W^{(3)}}_{W^{(1)} 
W^{(2)}}\rightarrow V^{(W^{(2)})'}_{W^{(1)} (W^{(3)})'}$ is a linear 
isomorphism 
with inverse $A_{-r-1}$ for any $r\in\mathbb{Z}$. 
\end{propo}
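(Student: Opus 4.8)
The plan is to handle the two assertions in parallel. Both $\Omega_r$ and $A_r$ are manifestly linear in $\mathcal{Y}$ directly from the defining formulas \eqref{intwopskewsym} and \eqref{intwopcontra}, and (as asserted in the definitions preceding the statement, and established in \cite{FHL} and \cite{HLZ2}) each carries an intertwining operator to an intertwining operator of the indicated type, so that the displayed maps really are maps between the stated spaces. Granting this, the proposition reduces to the two inverse relations. Here I would exploit that the index shift $r\mapsto -r-1$ is an involution of $\mathbb{Z}$: it therefore suffices to prove $\Omega_{-r-1}\circ\Omega_r=\mathrm{id}$ and $A_{-r-1}\circ A_r=\mathrm{id}$ for every $r$, since replacing $r$ by $-r-1$ in these identities immediately yields $\Omega_r\circ\Omega_{-r-1}=\mathrm{id}$ and $A_r\circ A_{-r-1}=\mathrm{id}$. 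Thus each map is a two-sided inverse of the other, and in particular a linear isomorphism.

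For $\Omega_r$ I would compute $\Omega_{-r-1}(\Omega_r(\mathcal{Y}))$ directly. Writing $\mathcal{Z}=\Omega_r(\mathcal{Y})$, the definition \eqref{intwopskewsym} gives $\Omega_{-r-1}(\mathcal{Z})(w_{(1)},x)w_{(2)}=e^{xL(-1)}\mathcal{Z}(w_{(2)},e^{(2(-r-1)+1)\pi i}x)w_{(1)}$. The governing arithmetic is $2(-r-1)+1=-(2r+1)$ together with $e^{-(2r+1)\pi i}=-1$. Substituting $y=e^{-(2r+1)\pi i}x$ into $\mathcal{Z}(w_{(2)},y)w_{(1)}=e^{yL(-1)}\mathcal{Y}(w_{(1)},e^{(2r+1)\pi i}y)w_{(2)}$ collapses the two rotations and produces $e^{-xL(-1)}\mathcal{Y}(w_{(1)},x)w_{(2)}$; the outer $e^{xL(-1)}$ then cancels the $e^{-xL(-1)}$ and returns $\mathcal{Y}$. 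The only subtlety is that the substitution of the factors $e^{\pm(2r+1)\pi i}$ into the complex-power series in $W^{(3)}\{x\}$ must be carried out with the fixed branch convention under which \eqref{intwopskewsym} is stated, so that $e^{(2r+1)\pi i}e^{-(2r+1)\pi i}$ acts as the identity on each monomial $x^n$.

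For $A_r$ I would argue analogously but dualize. Under the canonical identifications $((W^{(2)})')'\cong W^{(2)}$ and $((W^{(3)})')'\cong W^{(3)}$ of \cite{FHL}, the composite $A_{-r-1}(A_r(\mathcal{Y}))$ is again of type $\binom{W^{(3)}}{W^{(1)}\,W^{(2)}}$, and unwinding the defining relation \eqref{intwopcontra} twice reduces the claim to an operator identity on $W^{(1)}$: namely that the insertion $w_{(1)}\mapsto e^{xL(1)}e^{(2r+1)\pi i L(0)}(x^{-L(0)})^2 w_{(1)}$, applied with parameters $(r,x)$ and then, with reciprocal formal variable, with parameters $(-r-1,x^{-1})$, is the identity. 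This is the intertwining-operator incarnation of the involutivity $Y^{oo}=Y$ of the opposite operator \eqref{oppopdef}, and it follows from the $\mathfrak{sl}_2$ commutation relations among $L(-1),L(0),L(1)$: the paired $L(0)$-power factors cancel via $e^{-(2r+1)\pi i L(0)}e^{(2r+1)\pi i L(0)}=\mathrm{id}$ and $(x^{-L(0)})^2(x^{L(0)})^2=\mathrm{id}$, while the two $L(1)$-exponentials combine using the standard conjugation relation $x^{-L(0)}e^{x^{-1}L(1)}x^{L(0)}=e^{L(1)}$, exactly as in the \cite{FHL} verification that $Y^{oo}=Y$.

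The main obstacle is not this inverse bookkeeping but, should one wish to avoid citing it, the well-definedness input itself, i.e.\ the verification that $\Omega_r(\mathcal{Y})$ and especially $A_r(\mathcal{Y})$ satisfy the Jacobi identity \eqref{intwopjac}. For $\Omega_r$ this requires transporting \eqref{intwopjac} through conjugation by $e^{xL(-1)}$ and the rotation $x\mapsto e^{(2r+1)\pi i}x$, using $[L(-1),Y(v,x)]=\frac{d}{dx}Y(v,x)$ and the translation property $e^{x_0L(-1)}Y(v,x)e^{-x_0L(-1)}=Y(v,x+x_0)$; for $A_r$ it requires transferring \eqref{intwopjac} through the contragredient vertex operators on $(W^{(2)})'$ and $(W^{(3)})'$, whose definitions involve $Y^o$, so that the compatibility of $Y^o$ with the $\mathfrak{sl}_2$-operators is precisely what makes $A_r(\mathcal{Y})$ intertwining. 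Throughout, the genuinely delicate point is tracking the complex-power substitutions $x\mapsto x^{-1}$ and the rotations $e^{(2r+1)\pi i L(0)}$ consistently with the $W\{x\}$-series conventions.
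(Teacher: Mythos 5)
The paper does not prove this proposition itself; it is quoted directly from Propositions 3.44 and 3.46 of \cite{HLZ2}, and your sketch correctly reproduces the standard argument given there (and in \cite{FHL}, \cite{HL1}). In particular, your reduction via the involution $r\mapsto -r-1$, the monomial-by-monomial cancellation $e^{(2r+1)\pi i n}e^{-(2r+1)\pi i n}=1$ for $\Omega$, and the $\mathfrak{sl}_2$-conjugation collapse mirroring $Y^{oo}=Y$ for $A$ are exactly the right computations, and you correctly identify that the substantive work lies in verifying that $\Omega_r(\mathcal{Y})$ and $A_r(\mathcal{Y})$ satisfy the Jacobi identity at all.
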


We say that a bilinear pairing $(\cdot,\cdot)$ between $V$-modules $W^{(1)}$ 
and 
$W^{(2)}$ is \textit{invariant} if
\begin{equation*}
 (Y_{W^{(1)}}(v,x)w_{(1)}, w_{(2)})=(w_{(1)}, Y_{W^{(2)}}^o(v,x)w_{(2)})
\end{equation*}
for $v\in V$, $w_{(1)}\in W^{(1)}$, and $w_{(2)}\in W^{(2)}$. It is clear that 
if there is a nondegenerate invariant bilinear pairing between $W^{(1)}$ and 
$W^{(2)}$, then $W^{(1)}$ and $W^{(2)}$ form a contragredient pair. The next 
proposition, which is a minor generalization of Remark 2.9 in \cite{Li}, shows 
how certain intertwining operators yield nondegenerate invariant pairings 
between modules:
\begin{propo}\label{contpropo}
 Suppose $V$ is a vertex operator algebra equipped with a nondegenerate 
invariant bilinear pairing $(\cdot,\cdot)_V$, and suppose $W^{(1)}$ and 
$W^{(2)}$ 
are $V$-modules. If $\mathcal{Y}$ is an intertwining operator of type 
$\binom{V}{W^{(1)}\,W^{(2)}}$, then the bilinear pairing $(\cdot,\cdot)$ 
between 
$W^{(1)}$ and $W^{(2)}$ given by
 \begin{equation}\label{contform}
  (w_{(1)},w_{(2)})=\mathrm{Res}_x\, (\mathbf{1},\mathcal{Y}^o_0(w_{(1)}, 
e^{\pi 
i} x) e^{x L(1)} w_{(2)})_V
 \end{equation}
for $w_{(1)}\in W^{(1)}$ and $w_{(2)}\in W^{(2)}$ is invariant. Moreover, if 
$W^{(1)}$ and $W^{(2)}$ are irreducible and $\mathcal{Y}$ is non-zero, then the 
pairing is nondegenerate and $W^{(1)}$ and $W^{(2)}$ form a contragredient pair.
\end{propo}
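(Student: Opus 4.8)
The plan is to recognize the pairing \eqref{contform} as essentially a matrix coefficient of the genuine intertwining operator $A_0(\mathcal{Y})$, and thereby to reduce both assertions to standard properties of intertwining operators together with Schur's lemma. Since $(\cdot,\cdot)_V$ is nondegenerate and invariant, the discussion preceding the proposition provides a $V$-module isomorphism $\phi\colon V\to V'$ characterized by $\langle\phi(u),v\rangle=(u,v)_V$. Writing $(\mathbf{1},-)_V=\langle\phi(\mathbf{1}),-\rangle$ and applying the defining relation \eqref{intwopcontra} for $A_0(\mathcal{Y})$ (which is an intertwining operator of type $\binom{(W^{(2)})'}{W^{(1)}\,V'}$ by Proposition \ref{intwopsymprop}, as here $W^{(3)}=V$), I would rewrite the integrand of \eqref{contform} as
\begin{equation*}
 (\mathbf{1},\mathcal{Y}^o_0(w_{(1)},e^{\pi i}x)e^{xL(1)}w_{(2)})_V=\langle A_0(\mathcal{Y})(w_{(1)},e^{\pi i}x)\phi(\mathbf{1}),e^{xL(1)}w_{(2)}\rangle_{W^{(2)}}.
\end{equation*}
In this form the pairing is manifestly assembled from an honest intertwining operator applied to the fixed vector $\phi(\mathbf{1})\in V'$, with the extraction in \eqref{contform} selecting the weight-zero ($V_0$) component picked out by the form $(\cdot,\cdot)_V$.

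To prove invariance, namely $(Y_{W^{(1)}}(v,x_1)w_{(1)},w_{(2)})=(w_{(1)},Y^o_{W^{(2)}}(v,x_1)w_{(2)})$, I would run a direct formal-variable computation paralleling Remark~2.9 of \cite{Li}: the iterate formula \eqref{assocint} for $\mathcal{Y}$ moves the mode of $v$ out of the first argument of $\mathcal{Y}^o_0$, and invariance of $(\cdot,\cdot)_V$ on $V$ then transfers this action across the form, where it reassembles into $Y^o_{W^{(2)}}(v,x_1)$ acting on $w_{(2)}$. More conceptually, the rewriting above realizes \eqref{contform} as $\langle\Psi(w_{(1)}),w_{(2)}\rangle$ up to a nonzero scalar, where $\Psi\colon W^{(1)}\to(W^{(2)})'$ is the $V$-module homomorphism corresponding to $A_0(\mathcal{Y})\circ\phi$ under the standard identification of intertwining operators of type $\binom{(W^{(2)})'}{W^{(1)}\,V}$ with $\mathrm{Hom}_V(W^{(1)},(W^{(2)})')$; invariance is then immediate from $\langle\Psi(Y_{W^{(1)}}(v,x)w_{(1)}),w_{(2)}\rangle=\langle Y_{(W^{(2)})'}(v,x)\Psi(w_{(1)}),w_{(2)}\rangle=\langle\Psi(w_{(1)}),Y^o_{W^{(2)}}(v,x)w_{(2)}\rangle$. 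I expect the main obstacle to be precisely the bookkeeping in this identification: one must track the branch factor $e^{\pi i}$, the conjugation by $e^{xL(1)}$, and the opposite vertex operator \eqref{oppopdef} built into $\mathcal{Y}^o_0$, and verify that the extraction in \eqref{contform} isolates the $V_0$-component with the correct normalization. This is the delicate formal-calculus step; the rest is structural.

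For the second assertion, suppose $W^{(1)}$ and $W^{(2)}$ are irreducible and $\mathcal{Y}\neq 0$, and consider the left and right radicals $R^{(1)}=\{w_{(1)}:(w_{(1)},w_{(2)})=0\text{ for all }w_{(2)}\}$ and $R^{(2)}=\{w_{(2)}:(w_{(1)},w_{(2)})=0\text{ for all }w_{(1)}\}$. Invariance shows at once that these are $V$-submodules: for $w_{(1)}\in R^{(1)}$, invariance rewrites each $(v_n w_{(1)},w_{(2)})$ as a value $(w_{(1)},(\cdots)w_{(2)})$, which vanishes, so $v_n w_{(1)}\in R^{(1)}$, and symmetrically for $R^{(2)}$ using that $v\mapsto e^{xL(1)}(-x^{-2})^{L(0)}v$ is invertible. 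By irreducibility each radical is either $0$ or the whole module, and it cannot be the whole module because the pairing is not identically zero: since $A_0$ is a linear isomorphism (Proposition \ref{intwopsymprop}) and $\phi$ is a module isomorphism, the homomorphism $\Psi$ attached to $\mathcal{Y}\neq 0$ is nonzero, whence the pairing is nonzero. (Concretely, one checks the pairing of lowest-weight vectors is proportional to $(\mathbf{1},w_{(1)}(0)w_{(2)})_V$, which is nonzero for suitable choices once $\mathcal{Y}\neq 0$.) Therefore both radicals vanish, the pairing is nondegenerate, and—as recalled immediately before the proposition—a nondegenerate invariant bilinear pairing between $W^{(1)}$ and $W^{(2)}$ exhibits them as a contragredient pair.
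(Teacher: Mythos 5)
Your proposal is correct and follows essentially the same route as the paper: both identify the pairing with the matrix coefficients of the $V$-module homomorphism $W^{(1)}\rightarrow (W^{(2)})'$ obtained by evaluating a transform of $\mathcal{Y}$ at the vacuum (the paper uses $\Omega_0(A_0(\mathcal{Y}))$ with $\mathbf{1}$ in the first slot, you use $A_0(\mathcal{Y})$ with $\phi(\mathbf{1})$ in the second), deduce invariance from the homomorphism property, and get nondegeneracy from irreducibility once that homomorphism is known to be nonzero. The only step you outsource that the paper proves inline is the injectivity of the identification of $V^{(W^{(2)})'}_{W^{(1)}\,V}$ with $\mathrm{Hom}_V(W^{(1)},(W^{(2)})')$ --- i.e., that vanishing at the vacuum forces the whole intertwining operator to vanish --- which the paper obtains from the creation property together with the iterate formula \eqref{assocint}, and which is a cleaner justification than your parenthetical appeal to $(\mathbf{1},w_{(1)}(0)w_{(2)})_V$ being nonzero (not obvious when $V_0$ need not be one-dimensional).
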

\begin{proof}
We consider the intertwining operator $\mathcal{Y}'=\Omega_0(A_0(\mathcal{Y}))$ 
of type $\binom{(W^{(2)})'}{V\cong V'\,W^{(1)}}$. The  $L(-1)$-derivative 
property (\ref{intwopderiv}) implies that $\mathcal{Y}'(\mathbf{1},x)$ equals 
its constant term $\mathbf{1}_{-1}$. Moreover, the coefficient of $x_0^{-1}$ 
in the Jacobi identity (\ref{intwopjac}) implies that
\begin{equation*}
 Y_{(W^{(2)})'}(v,x_1)\mathbf{1}_{-1}-\mathbf{1}_{-1} Y_{W^{(1)}}(v,x_1)=0
\end{equation*}
since $Y(v,x_0)\mathbf{1}$ has no negative powers of $x_0$. Thus 
$\mathbf{1}_{-1}=\varphi_\mathcal{Y}$ is a $V$-homomorphism from 
$W^{(1)}$ to $(W^{(2)})'$ and we obtain a bilinear pairing $(\cdot,\cdot)$ 
between $W^{(1)}$ and $W^{(2)}$ given by
\begin{equation*}
 (w_{(1)},w_{(2)})=\langle\varphi_\mathcal{Y}(w_{(1)}), w_{(2)}\rangle
\end{equation*}
for $w_{(1)}\in W^{(1)}$, $w_{(2)}\in W^{(2)}$, which is invariant because 
$\varphi_\mathcal{Y}$ is a homomorphism.

To show that the invariant pairing $(\cdot,\cdot)$ is also given by 
(\ref{contform}), we calculate using the definitions of $\Omega_0$ and $A_0$ 
from (\ref{intwopskewsym}) and (\ref{intwopcontra}), and identifying $V\cong 
V'$ 
via $(\cdot,\cdot)_V$. For $w_{(1)}\in W^{(1)}$ and $w_{(2)}\in W^{(2)}$,
\begin{align*}
 (w_{(1)},w_{(2)}) & =\langle\varphi_\mathcal{Y}(w_{(1)}),w_{(2)}\rangle 
=\mathrm{Res}_x\,x^{-1} 
\langle\Omega_0(A_0(\mathcal{Y}))(\mathbf{1},x)w_{(1)},w_{(2)}\rangle\\
 & =\mathrm{Res}_x\,x^{-1}\langle e^{x L(-1)} A_0(\mathcal{Y})(w_{(1)},e^{\pi 
i} 
x)\mathbf{1},w_{(2)}\rangle\\
 & =\mathrm{Res}_x\,x^{-1}\langle A_0(\mathcal{Y})(w_{(1)},e^{\pi i} 
x)\mathbf{1}, e^{x L(1)} w_{(2)}\rangle\\
 & =\mathrm{Res}_x\,x^{-1} (\mathbf{1}, \mathcal{Y}^o_0(w_{(1)}, e^{\pi i} x) 
e^{x L(1)}w_{(2)})_V.
\end{align*}
This proves the first assertion of the proposition.

To prove the nondegeneracy of $(\cdot,\cdot)$ when $W^{(1)}$, $W^{(2)}$ are  
irreducible and $\mathcal{Y}$ is non-zero, it is enough to prove that 
$\varphi_\mathcal{Y}: W^{(1)}\rightarrow (W^{(2)})'$ is an isomorphism. Since 
$W^{(1)}$ and $W^{(2)}$ are irreducible, it suffices to prove that 
$\varphi_\mathcal{Y}$ is non-zero, that is, $\mathcal{Y}'(\mathbf{1},x)\neq 0$. 
But if $\mathcal{Y}'(\mathbf{1},x)=0$, then the creation property for vertex 
operator algebras and the 
iterate formula (\ref{assocint}) imply that for any $v\in V$,
\begin{equation*}
 \mathcal{Y}'(v,x)=\mathcal{Y}'(v_{-1}\mathbf{1},x)=0.
\end{equation*}
This is a contradiction because by Proposition \ref{intwopsymprop}, 
$\mathcal{Y}'$ is non-zero if $\mathcal{Y}$ is.
\end{proof}

\section{Affine Lie algebra vertex operator algebras, their modules, and 
intertwining operators}

In this section, we shall recall what we need from the representation theory of 
vertex operator algebras based on affine Lie algebras. We fix a 
finite-dimensional complex simple Lie algebra $\mathfrak{g}$, and let 
$\mathfrak{h}$ denote a Cartan 
subalgebra of $\mathfrak{g}$. There is a unique up to scale nondegenerate 
invariant bilinear form $\langle\cdot,\cdot\rangle$ on $\mathfrak{g}$, which is 
nondegenerate on $\mathfrak{h}$ and induces a bilinear form on 
$\mathfrak{h}^*$. 
We shall normalize the form $\langle\cdot,\cdot\rangle$ on $\mathfrak{g}$ so 
that 
\begin{equation*}
 \langle\alpha,\alpha\rangle=2
\end{equation*}
for long roots $\alpha\in\mathfrak{h}^*$. Irreducible finite-dimensional 
$\mathfrak{g}$-modules are in one-to-one correspondence with dominant integral 
weights $\lambda\in\mathfrak{h}^*$. 

We now recall the affine Lie algebra
\begin{equation*} 
\widehat{\mathfrak{g}}=\mathfrak{g}\otimes\mathbb{C}[t,t^{-1}]\oplus\mathbb{C}
\mathbf{k}
\end{equation*}
where $\mathbf{k}$ is central and all other brackets are determined by
\begin{equation*}
 [a\otimes t^m, b\otimes t^n]=[a,b]\otimes t^{m+n}+m\langle 
a,b\rangle\delta_{m+n,0}\mathbf{k}
\end{equation*}
for $a,b\in\mathfrak{g}$ and $m,n\in\mathbb{Z}$. Using the notation of 
\cite{LL} 
Chapter 6, for any finite-dimensional $\mathfrak{g}$-module $U$ and any level 
$\ell\in\mathbb{C}$, we have the generalized Verma module $\gvmu$ on which 
$\mathbf{k}$ acts as the scalar $\ell$. The generalized Verma module $\gvmu$ 
has 
a unique irreducible quotient $\imu$. The $\widehat{\mathfrak{g}}$-modules 
$\gvmu$ and $\imu$ are linearly spanned by vectors of the form
\begin{equation}\label{affinespan}
 a_1(-n_1)\cdots a_k(-n_k) u
\end{equation}
for $a_i\in\mathfrak{g}$, $n_i>0$. Here we use $a(n)$ for $a\in\mathfrak{g}$ 
and 
$n\in\mathbb{Z}$ to denote the action of $a\otimes t^n$ on a 
$\widehat{\mathfrak{g}}$-module. 

In the case $U=\mathbb{C}\mathbf{1}$, the trivial one-dimensional 
$\mathfrak{g}$-module, $\gvmu=\gvmzero$ is a vertex algebra, as is its 
irreducible quotient $\imzero$. The vertex operator map is determined by
\begin{equation}\label{liealgops}
 Y(a(-1)\mathbf{1},x)=\sum_{n\in\mathbb{Z}} a(n) x^{-n-1}
\end{equation}
for $a\in\mathfrak{g}$. When $\ell\neq -h$ where $h$ is the dual Coxeter number 
of $\mathfrak{g}$, $\gvmzero$ and $\imzero$ are also vertex operator algebras.

Let $L_\lambda$ denote the irreducible $\mathfrak{g}$-module with highest 
weight 
$\lambda$. Then for any level $\ell\in\mathbb{C}$, the irreducible 
$V_{\widehat{\mathfrak{g}}}(\ell, 0)$-modules consist of the modules $L(\ell, L_\lambda)$ 
for $\lambda$ a dominant integral weight. Moreover, suppose $\theta$ is the 
highest root of $\mathfrak{g}$; if $\ell$ is a nonnegative integer, the 
irreducible $L_{\widehat{\mathfrak{g}}}(\ell,0)$-modules are given by $L(\ell, 
L_\lambda)$ where $\lambda$ is a dominant integral weight satisfying 
$\langle\lambda,\theta\rangle\leq\ell$ (\cite{FZ}; see also \cite{LL}).

As is well known, the dual of an irreducible $\mathfrak{g}$-module $L_\lambda$ is also an  
irreducible $\mathfrak{g}$-module with the action given by
\begin{equation*}
 \langle x\cdot v',v\rangle=-\langle v',x\cdot v\rangle
\end{equation*}
for $x\in\mathfrak{g}$, $v\in L_\lambda$, and $v'\in L_{\lambda}^*$. So we have 
$L_\lambda^*\cong L_{\lambda^*}$ for some dominant integral weight $\lambda^*$. 
In fact, $\lambda^*=-w(\lambda)$ where $w$ is the element in the Weyl group of 
$\mathfrak{g}$ of maximal length (see for example \cite{Hu}). Then the 
contragredient of the
$V_{\widehat{\mathfrak{g}}}(\ell,0)$-module 
$L_{\widehat{\mathfrak{g}}}(\ell,L_\lambda)$ is isomorphic to 
$L_{\widehat{\mathfrak{g}}}(\ell, L_{\lambda^*})$ (\cite{FZ}).

From now on, we assume that $\ell$ is a nonnegative integer; suppose that 
$L_{\widehat{\mathfrak{g}}}(\ell, L_{\lambda_1})$, 
$L_{\widehat{\mathfrak{g}}}(\ell, L_{\lambda_2})$, and 
$L_{\widehat{\mathfrak{g}}}(\ell, L_{\lambda_3})$ are irreducible 
$L_{\widehat{\mathfrak{g}}}(\ell,0)$-modules, so that 
$\langle\lambda_i,\theta\rangle\leq\ell$ for $i=1,2,3$. We recall from 
\cite{FZ} 
(see also \cite{Li2}) the classification of intertwining operators of type 
$\binom{L_{\widehat{\mathfrak{g}}}(\ell, 
L_{\lambda_3})}{L_{\widehat{\mathfrak{g}}}(\ell, 
L_{\lambda_1})\,L_{\widehat{\mathfrak{g}}}(\ell, L_{\lambda_2})}$. First, let 
$A(L_{\widehat{\mathfrak{g}}}(\ell,0))$ be the Zhu's algebra of 
$L_{\widehat{\mathfrak{g}}}(\ell,0)$ (see \cite{Z} for the definition); from 
\cite{FZ}, as an associative algebra,
\begin{equation*}
 A(L_{\widehat{\mathfrak{g}}}(\ell,0))\cong U(\mathfrak{g})/\langle 
x_\theta^{\ell+1}\rangle,
\end{equation*}
where $x_\theta$ is a root vector for the longest root $\theta$ of 
$\mathfrak{g}$. We also need the 
$A(L_{\widehat{\mathfrak{g}}}(\ell,0))$-bimodule 
$A(L_{\widehat{\mathfrak{g}}}(\ell,L_{\lambda_1}))$, which from \cite{FZ} is 
given by
\begin{equation*}
 A(L_{\widehat{\mathfrak{g}}}(\ell,L_{\lambda_1}))\cong (L_{\lambda_1}\otimes 
U(\mathfrak{g}))/\langle v_{\lambda_1}\otimes 
x_\theta^{\ell-\langle\lambda_1,\theta\rangle+1}\rangle,
\end{equation*}
where $v_{\lambda_1}$ is a highest weight vector of $L_{\lambda_1}$ and 
$\langle v_{\lambda_1}\otimes 
x_\theta^{\ell-\langle\lambda_1,\theta\rangle+1}\rangle$ indicates the 
subbimodule generated by the indicated 
element. The $A(L_{\widehat{\mathfrak{g}}}(\ell,0))\cong 
U(\mathfrak{g})/\langle 
x_\theta^{\ell+1}\rangle$-bimodule 
structure on $(L_{\lambda_1}\otimes U(\mathfrak{g}))/\langle 
v_{\lambda_1}\otimes x_\theta^{\ell-\langle\lambda_1,\theta\rangle+1}\rangle$ 
is 
induced by the following $U(\mathfrak{g})$-bimodule structure on 
$L_{\lambda_1}\otimes U(\mathfrak{g})$:
\begin{equation*}
 x\cdot(v\otimes y)=(x\cdot v)\otimes y+v\otimes xy
\end{equation*}
for $x,y\in U(\mathfrak{g})$, $v\in L_{\lambda_1}$, and
\begin{equation*}
 (v\otimes y)\cdot x=v\otimes yx.
\end{equation*} 
We also recall from \cite{FZ} that the lowest weight spaces $L_{\lambda_2}$ and 
$L_{\lambda_3}$ of $L_{\widehat{\mathfrak{g}}}(\ell,L_{\lambda_2})$ and 
$L_{\widehat{\mathfrak{g}}}(\ell,L_{\lambda_3})$, respectively, are (left)
$A(L_{\widehat{\mathfrak{g}}}(\ell,0))$-modules in the natural way.

The description of the space of intertwining operators of type 
$\binom{L_{\widehat{\mathfrak{g}}}(\ell, 
L_{\lambda_3})}{L_{\widehat{\mathfrak{g}}}(\ell, 
L_{\lambda_1})\,L_{\widehat{\mathfrak{g}}}(\ell, L_{\lambda_2})}$ from 
Theorem 2.11 and Corollary 2.13 in \cite{Li2} (see also Theorems 1.5.2 and 
1.5.3 in \cite{FZ}) is as follows:
\begin{equation*} 
V^{L_{\widehat{\mathfrak{g}}}(\ell, 
L_{\lambda_3})}_{L_{\widehat{\mathfrak{g}}}(\ell, L_{\lambda_1}) 
\,L_{\widehat{\mathfrak{g}}}(\ell, 
L_{\lambda_2})}\cong\mathrm{Hom}_{A(L_{\widehat{\mathfrak{g}}}(\ell,0))}(A(L_{
\widehat{\mathfrak{g}}
}(\ell,L_{\lambda_1}))\otimes_{A(L_{\widehat{\mathfrak{g}}}(\ell,0))} 
L_{\lambda_2},L_{\lambda_3}).
\end{equation*}
This space of $A(L_{\widehat{\mathfrak{g}}}(\ell,0))$-homomorphisms can be 
described more usefully using the following lemma:
\begin{lemma}
 As (left) modules for $A(L_{\widehat{\mathfrak{g}}}(\ell,0))\cong 
U(\mathfrak{g})/\langle x_\theta^{\ell+1}\rangle$,
 \begin{equation*}  
A(L_{\widehat{\mathfrak{g}}}(\ell,L_{\lambda_1}))\otimes_{A(L_{\widehat{
\mathfrak{g}}}(\ell,0))} L_{\lambda_2}\cong (L_{\lambda_1}\otimes 
L_{\lambda_2})/W,
 \end{equation*}
with $W$ the $U(\mathfrak{g})/\langle x_\theta^{\ell+1}\rangle$-module 
generated by all vectors of the form $v_{\lambda_1}\otimes 
x_\theta^{\ell-\langle\lambda_1,\theta\rangle+1}\cdot w$ for $w\in 
L_{\lambda_2}$.
\end{lemma}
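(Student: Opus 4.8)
The plan is to realize $A(L_{\ghat}(\ell,L_{\lambda_1}))$ as a quotient of a free right $U(\mathfrak{g})$-module and then compute the tensor product by right exactness. Throughout write $A=A(L_{\ghat}(\ell,0))\cong U(\mathfrak{g})/\langle x_\theta^{\ell+1}\rangle$, set $N=\ell-\langle\lambda_1,\theta\rangle+1$ and $\xi=v_{\lambda_1}\otimes x_\theta^{N}$, and let $P=L_{\lambda_1}\otimes U(\mathfrak{g})$ carry the $U(\mathfrak{g})$-bimodule structure recalled above. By definition $A(L_{\ghat}(\ell,L_{\lambda_1}))=P/M$, where $M$ is the $U(\mathfrak{g})$-subbimodule generated by $\xi$ and the $U(\mathfrak{g})$-bimodule structure descends to the stated $A$-bimodule structure on the quotient. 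The first point I would record is that, since $A(L_{\ghat}(\ell,L_{\lambda_1}))$ is a right $A$-module and $L_{\lambda_2}$ is a left $A$-module, tensoring over $A$ coincides with tensoring over $U(\mathfrak{g})$: the relations $b\,u\otimes w-b\otimes u\,w$ for $u\in U(\mathfrak{g})$ are literally the relations for $u$ ranging over $A$, because both actions factor through the quotient $U(\mathfrak{g})\to A$. Hence it suffices to compute $A(L_{\ghat}(\ell,L_{\lambda_1}))\otimes_{U(\mathfrak{g})}L_{\lambda_2}$.

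I would then apply right exactness of $-\otimes_{U(\mathfrak{g})}L_{\lambda_2}$ to the exact sequence $M\hookrightarrow P\twoheadrightarrow A(L_{\ghat}(\ell,L_{\lambda_1}))$ of $U(\mathfrak{g})$-bimodules, obtaining an isomorphism of left $U(\mathfrak{g})$-modules
\[
 A(L_{\ghat}(\ell,L_{\lambda_1}))\otimes_{U(\mathfrak{g})}L_{\lambda_2}\;\cong\;\bigl(P\otimes_{U(\mathfrak{g})}L_{\lambda_2}\bigr)\big/\,\mathrm{im}\bigl(M\otimes_{U(\mathfrak{g})}L_{\lambda_2}\bigr).
\]
The central computation is to identify $P\otimes_{U(\mathfrak{g})}L_{\lambda_2}$ with $L_{\lambda_1}\otimes L_{\lambda_2}$. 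Choosing a basis of $L_{\lambda_1}$ exhibits $P$ as a free right $U(\mathfrak{g})$-module, so $(v\otimes y)\otimes w\mapsto v\otimes(y\cdot w)$ is a linear isomorphism onto $L_{\lambda_1}\otimes L_{\lambda_2}$, with inverse $v\otimes w\mapsto (v\otimes 1)\otimes w$. Using the left action $x\cdot(v\otimes y)=(x\cdot v)\otimes y+v\otimes xy$ for $x\in\mathfrak{g}$, this isomorphism carries the left $\mathfrak{g}$-action to $(x\cdot v)\otimes(y\,w)+v\otimes x\,(y\,w)$, which is exactly the diagonal action on $L_{\lambda_1}\otimes L_{\lambda_2}$; thus the identification is one of left $U(\mathfrak{g})$-modules.

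It remains to identify the image of $M\otimes_{U(\mathfrak{g})}L_{\lambda_2}$, and this is the step I expect to demand the most care. Since $M=U(\mathfrak{g})\,\xi\,U(\mathfrak{g})$ is generated as a bimodule by the single vector $\xi$, right $U(\mathfrak{g})$-linearity lets me absorb every right multiplication into the second tensor factor, so that each element of $M\otimes_{U(\mathfrak{g})}L_{\lambda_2}$ is a sum of terms $(x\cdot\xi)\otimes w$ with $x\in U(\mathfrak{g})$ and $w\in L_{\lambda_2}$. Under the isomorphism above such a term maps to $x\cdot\bigl(v_{\lambda_1}\otimes x_\theta^{N}\cdot w\bigr)$, so the image is precisely the left $U(\mathfrak{g})$-submodule of $L_{\lambda_1}\otimes L_{\lambda_2}$ generated by the vectors $v_{\lambda_1}\otimes x_\theta^{N}\cdot w$, namely $W$. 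Combining the displays gives $A(L_{\ghat}(\ell,L_{\lambda_1}))\otimes_{A}L_{\lambda_2}\cong(L_{\lambda_1}\otimes L_{\lambda_2})/W$ as left modules; and because the left-hand side is a module for $A$, the element $x_\theta^{\ell+1}$ must annihilate the quotient, which a posteriori justifies regarding $W$ as the $U(\mathfrak{g})/\langle x_\theta^{\ell+1}\rangle$-submodule generated by those vectors. The main obstacle is thus the bookkeeping in this final step: combining right exactness with the single-generator bimodule structure of $M$ to be sure that no relations survive beyond the displayed $\xi$-vectors.
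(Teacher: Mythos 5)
Your argument is correct, and it reaches the lemma by a genuinely different route from the paper. The paper works entirely with explicit elements: it defines a map $\Phi\colon (L_{\lambda_1}\otimes U(\mathfrak{g}))\otimes_{U(\mathfrak{g})} L_{\lambda_2}\to (L_{\lambda_1}\otimes L_{\lambda_2})/W$ by $(v\otimes y)\otimes w\mapsto v\otimes y\cdot w+W$, checks by direct computation on the generators $y\cdot(v_{\lambda_1}\otimes x_\theta^{N})\cdot y'\otimes w$ that it descends to the quotient by the subbimodule, then constructs a candidate inverse $\Psi\colon u\otimes w\mapsto (u\otimes 1+W')\otimes w$, checks by another direct computation that $\Psi$ kills the generators of $W$, and finally observes that the two induced maps are mutually inverse. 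You instead invoke right exactness of $-\otimes_{U(\mathfrak{g})}L_{\lambda_2}$ applied to $M\hookrightarrow P\twoheadrightarrow A(L_{\ghat}(\ell,L_{\lambda_1}))$, identify $P\otimes_{U(\mathfrak{g})}L_{\lambda_2}$ with $L_{\lambda_1}\otimes L_{\lambda_2}$ using freeness of $P$ as a right module, and compute the image of $M$ by absorbing right multiplications into the second factor; your preliminary observation that $\otimes_A$ and $\otimes_{U(\mathfrak{g})}$ agree here is a point the paper uses only implicitly. The homological route is more conceptual and dispenses with the paper's two well-definedness verifications at the cost of the bookkeeping in identifying $\mathrm{im}\bigl(M\otimes_{U(\mathfrak{g})}L_{\lambda_2}\bigr)$, which you handle correctly (the key point being that $(y\xi y')\otimes w=(y\xi)\otimes(y'w)$, so the image is the left submodule generated by the $v_{\lambda_1}\otimes x_\theta^{N}\cdot w$); the paper's element-wise approach buys complete explicitness of the isomorphism and its inverse, which is what it actually uses later when transporting integral structures. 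Your closing remark that $x_\theta^{\ell+1}$ annihilates the quotient a posteriori is a fair reading of the lemma's phrasing of $W$, since $L_{\lambda_1}\otimes L_{\lambda_2}$ itself need not be a $U(\mathfrak{g})/\langle x_\theta^{\ell+1}\rangle$-module.
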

\begin{proof}
 We know that $A(L_{\widehat{\mathfrak{g}}}(\ell,\lambda_1))\cong 
(L_{\lambda_1}\otimes U(\mathfrak{g}))/W'$, where $W'$ is the subbimodule 
generated by $v_{\lambda_1}\otimes 
x_\theta^{\ell-\langle\lambda_1,\theta\rangle+1}$. We first define a map
 \begin{equation*}
  \Phi: (L_{\lambda_1}\otimes U(\mathfrak{g}))\otimes_{U(\mathfrak{g})} 
L_{\lambda_2}\rightarrow (L_{\lambda_1}\otimes 
L_{\lambda_2})/W
 \end{equation*}
as follows: for $u= v\otimes x$ where $v\in L_{\lambda_1}$ and $x\in 
U(\mathfrak{g})$, and for $w\in L_{\lambda_2}$, we define
\begin{equation*}
 \Phi(u\otimes w)=v\otimes x\cdot w+W.
\end{equation*}
By the left $U(\mathfrak{g})$-module structure on $(L_{\lambda_1}\otimes 
U(\mathfrak{g}))\otimes_{U(\mathfrak{g})} L_{\lambda_2}$ and the tensor product 
$\mathfrak{g}$-module structure on $(L_{\lambda_1}\otimes L_{\lambda_2})/W$, it 
is easy to see that $\Phi$ is a $U(\mathfrak{g})$-homomorphism. Moreover, 
$\Phi$ 
induces a $U(\mathfrak{g})/\langle x_\theta^{\ell+1}\rangle$-module 
homomorphism
\begin{equation*}
 \varphi: (L_{\lambda_1}\otimes 
U(\mathfrak{g}))/W'\otimes_{U(\mathfrak{g})/\langle x_\theta^{\ell+1}\rangle} 
L_{\lambda_2}\rightarrow (L_{\lambda_1}\otimes 
L_{\lambda_2})/W
\end{equation*}
because for $y,y'\in U(\mathfrak{g})$ and $w\in L_{\lambda_2}$,
\begin{align*}
  \Phi(y  \cdot( & v_{\lambda_1}\otimes 
x_{\theta}^{\ell-\langle\lambda_1,\theta\rangle+1})\cdot y'\otimes w)   \\
 & =\Phi((y\cdot v_{\lambda_1})\otimes 
x_\theta^{\ell-\langle\lambda_1,\theta\rangle+1} y'\otimes 
w+v_{\lambda_1}\otimes 
yx_\theta^{\ell-\langle\lambda_1,\theta\rangle+1}y'\otimes w)\\
 & =(y\cdot v_{\lambda_1})\otimes 
(x_{\theta}^{\ell-\langle\lambda_1,\theta\rangle+1} y')\cdot 
w+v_{\lambda_1}\otimes (yx_{\theta}^{\ell-\langle\lambda_1,\theta\rangle+1} 
y')\cdot w+W \\
 & =y\cdot(v_{\lambda_1}\otimes 
(x_{\theta}^{\ell-\langle\lambda_1,\theta\rangle+1} y')\cdot w)+W =0.
\end{align*}

To obtain an inverse homomorphism, we define a $U(\mathfrak{g})$-homomorphism
\begin{equation*}
 \Psi: L_{\lambda_1}\otimes L_{\lambda_2}\rightarrow (L_{\lambda_1}\otimes 
U(\mathfrak{g}))/W'\otimes_{U(\mathfrak{g})/\langle x_\theta^{\ell+1}\rangle} 
L_{\lambda_2}
\end{equation*}
by defining for $u\in L_{\lambda_1}$ and $w\in L_{\lambda_2}$:
\begin{equation*}
 \Psi(u\otimes w)=(u\otimes 1+W')\otimes w.
\end{equation*}
The map $\Psi$ induces a $U(\mathfrak{g})/\langle 
x_\theta^{\ell+1}\rangle$-homomorphism
\begin{equation*}
 \psi: (L_{\lambda_1}\otimes L_{\lambda_2})/W\rightarrow (L_{\lambda_1}\otimes 
U(\mathfrak{g}))/W'\otimes_{U(\mathfrak{g})/\langle x_\theta^{\ell+1}\rangle} 
L_{\lambda_2}
\end{equation*}
because for $x\in U(\mathfrak{g})$ and $w\in L_{\lambda_2}$,
\begin{align*}
  \Psi(x\cdot( & v_{\lambda_1}\otimes 
x_\theta^{\ell-\langle\lambda_1,\theta\rangle+1}\cdot w)) 
 = \Psi((x\cdot v_{\lambda_1})\otimes 
x_\theta^{\ell-\langle\lambda_1,\theta\rangle+1}\cdot w +v_{\lambda_1}\otimes 
(xx_\theta^{\ell-\langle\lambda_1,\theta\rangle+1})\cdot w)\\
 & = ((x\cdot v_{\lambda_1})\otimes 1 +W')\otimes 
x_\theta^{\ell-\langle\lambda_1,\theta\rangle+1}\cdot w +(v_{\lambda_1}\otimes 
1 
+W')\otimes xx_\theta^{\ell-\langle\lambda_1,\theta\rangle+1}\cdot w\\
 & =((x\cdot v_{\lambda_1})\otimes 
x_\theta^{\ell-\langle\lambda_1,\theta\rangle+1}+W')\otimes 
w+(v_{\lambda_1}\otimes 
xx_\theta^{\ell-\langle\lambda_1,\theta\rangle+1}+W')\otimes w\\
 & = (x\cdot(v_{\lambda_1}\otimes 
x_\theta^{\ell-\langle\lambda_1,\theta\rangle+1})+W')\otimes w=0.
\end{align*}
From the definitions, it is easy to see that $\varphi$ and $\psi$ are inverses 
of each other, so they give $A(L_{\widehat{\mathfrak{g}}}(\ell,0))\cong 
U(\mathfrak{g})/\langle x_\theta^{\ell+1}\rangle$-module isomorphisms.
\end{proof}
From this lemma, we have (see Theorem 3.2.3 in \cite{FZ}):
\begin{equation*}
 V^{L_{\widehat{\mathfrak{g}}}(\ell, 
L_{\lambda_3})}_{L_{\widehat{\mathfrak{g}}}(\ell, L_{\lambda_1}) 
\,L_{\widehat{\mathfrak{g}}}(\ell, 
L_{\lambda_2})}\cong\mathrm{Hom}_{\mathfrak{g}}((L_{\lambda_1}\otimes 
L_{\lambda_2})/W, 
L_{\lambda_3}).
\end{equation*}
From \cite{Li2} and \cite{FZ}, we can describe the isomorphism as follows. An 
intertwining operator $\mathcal{Y}$ of type 
$\binom{L_{\widehat{\mathfrak{g}}}(\ell, 
L_{\lambda_3})}{L_{\widehat{\mathfrak{g}}}(\ell, 
L_{\lambda_1})\,L_{\widehat{\mathfrak{g}}}(\ell, L_{\lambda_2})}$ induces a 
$\mathfrak{g}$-homomorphism
\begin{equation*}
 \pi(\mathcal{Y}): L_{\lambda_1}\otimes L_{\lambda_2}\rightarrow L_{\lambda_3}
\end{equation*}
given by
\begin{equation}\label{intwopiso}
 \pi(\mathcal{Y})(w_{(1)}\otimes w_{(2)})=w_{(1)}(0) w_{(2)}
\end{equation}
using the notation of (\ref{homogeneousops}), where $w_{(1)}\in 
L_{\lambda_1}\subseteq L_{\widehat{\mathfrak{g}}}(\ell, L_{\lambda_1})$ and 
$w_{(2)}\in L_{\lambda_2}\subseteq L_{\widehat{\mathfrak{g}}}(\ell, 
L_{\lambda_2})$. This homomorphism $\pi(\mathcal{Y})$ must equal $0$ on $W$, so it 
induces a homomorphism, which we also call $\pi(\mathcal{Y})$, from  
$(L_{\lambda_1}\otimes L_{\lambda_2})/W$ to $L_{\lambda_3}$. Note that it is 
not 
trivial to show that $\mathcal{Y}\mapsto\pi(\mathcal{Y})$ is a linear 
isomorphism; the most difficult part is to show that a 
$\mathfrak{g}$-homomorphism
\begin{equation*}
 f: (L_{\lambda_1}\otimes L_{\lambda_2})/W\rightarrow L_{\lambda_3}
\end{equation*}
extends to a (unique) intertwining operator of type 
$\binom{L_{\widehat{\mathfrak{g}}}(\ell, 
L_{\lambda_3})}{L_{\widehat{\mathfrak{g}}}(\ell, 
L_{\lambda_1})\,L_{\widehat{\mathfrak{g}}}(\ell, L_{\lambda_2})}$.

\section{Integral intertwining operators among modules for affine Lie algebra 
vertex operator algebras}

In this section we fix a nonnegative integral level $\ell$. We will classify 
integral intertwining operators among $\imzero$-modules, using integral forms 
which were first introduced in a Lie-algebraic setting in \cite{G1}. We shall 
now recall these integral forms; see \cite{M2} for more details.

The universal enveloping algebra $U(\widehat{\mathfrak{g}})$ of 
$\widehat{\mathfrak{g}}$ has an integral form 
$U_{\mathbb{Z}}(\widehat{\mathfrak{g}})$: it is the subring of 
$U(\widehat{\mathfrak{g}})$ generated by the vectors $(x_{\alpha}\otimes 
t^n)^k/k!$ for $k\geq 0$, where 
$n\in\mathbb{Z}$ and $x_\alpha $ is a root vector in a Chevalley basis of 
$\mathfrak{g}$ (\cite{G1}, see also \cite{Mi} and \cite{P}). We describe a 
basis 
of $U_\mathbb{Z}(\widehat{\mathfrak{g}})$ given in \cite{Mi}. Consider a 
Chevalley basis $\lbrace x_\alpha, h_i\rbrace$ of $\mathfrak{g}$, where 
$x_\alpha$ is a root vector corresponding to the root $\alpha$ and $h_i$ is the 
coroot corresponding to a simple root $\alpha_i$; suppose that $\theta$ is the 
highest root of $\mathfrak{g}$ with respect to the simple roots 
$\lbrace\alpha_i\rbrace$. Then $\widehat{\mathfrak{g}}$ has an 
integral basis consisting of the vectors
\begin{equation}\label{affinebasis}
 x_\alpha\otimes t^n,\,\,\,h_i\otimes t^n,\,\,\,-h_\theta\otimes t^0+\mathbf{k}
\end{equation}
where $n\in\mathbb{Z}$. Given any order of this basis, a basis for 
$U_\mathbb{Z}(\widehat{\mathfrak{g}})$ consists of ordered products of elements 
of the following forms:
\begin{equation}\label{ubasis1}
 \dfrac{(x_\alpha\otimes t^n)^m}{m!},\,\,\,\binom{h_i\otimes 
t^0+m-1}{m},\,\,\,\binom{-h_\theta\otimes t^0+\mathbf{k}+m-1}{m}
\end{equation}
where $n\in\mathbb{Z}$ and $m\geq 0$, as well as coefficients of powers of $x$ 
in 
series of the form
\begin{equation}\label{ubasis2}
 \mathrm{exp}\left(\sum_{j\geq 1} (h_i\otimes t^{nj})\dfrac{x^j}{j}\right)
\end{equation}
for $n\in\mathbb{Z}\setminus\lbrace 0\rbrace$.

Suppose $L_\lambda$ is a finite-dimensional irreducible $\mathfrak{g}$-module 
where $\lambda$ is a dominant integral weight of $\mathfrak{g}$, and suppose 
$v_\lambda$ is a highest weight vector of $L_\lambda$. Then $\gvmlambda$ and 
$\imlambda$ have integral forms $\gvmlambda_\mathbb{Z}$ and 
$\imlambda_\mathbb{Z}$ given by $U_\mathbb{Z}(\widehat{\mathfrak{g}})\cdot 
v_\lambda$. Given that $L_\lambda$ is included in $\gvmlambda$ and $\imlambda$ 
as their lowest conformal weight spaces (recall \eqref{affinespan}), we can 
define
\begin{equation*}
 (L_\lambda)_\mathbb{Z}=L_\lambda\cap 
U_{\mathbb{Z}}(\widehat{\mathfrak{g}})\cdot v_\lambda,
\end{equation*}
a sublattice of both $\gvmlambda_\mathbb{Z}$ and $\imlambda_\mathbb{Z}$ which, 
by \eqref{ubasis1}, agrees with the classical integral form of a 
$\mathfrak{g}$-module constructed in, for example, \cite{Hu} Chapter 7. In 
light of the basis for $U_\mathbb{Z}(\widehat{\mathfrak{g}})$ described above, 
we have
\begin{propo}\label{affintzformbasis}
 The integral forms  $V_{\widehat{\mathfrak{g}}}(\ell, L_\lambda)_\mathbb{Z}$ 
and 
$L_{\widehat{\mathfrak{g}}}(\ell, L_\lambda)_\mathbb{Z}$ are spanned by vectors 
of the form $P\cdot u$, where $u\in (L_\lambda)_\mathbb{Z}$ and $P$ is a 
product 
of operators of the form $x_\alpha(-n)^m/m!$, for $\alpha$ a root of 
$\mathfrak{g}$ and $n,m> 0$, and coefficients of powers of $x$ in series of the 
form
 \begin{equation*}
   \mathrm{exp}\left(\sum_{j\geq 1} h_i(-nj)\dfrac{x^j}{j}\right)
 \end{equation*}
for $n>0$.
\end{propo}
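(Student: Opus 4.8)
The plan is to exploit the triangular decomposition $\ghat = \ghat_{-}\oplus\ghat_0\oplus\ghat_{+}$, where $\ghat_{\pm}=\mathfrak{g}\otimes t^{\pm 1}\mathbb{C}[t^{\pm 1}]$ and $\ghat_0=\mathfrak{g}\otimes t^0\oplus\mathbb{C}\mathbf{k}$, together with the fact that the displayed basis of $U_\Z(\ghat)$ consists of \emph{ordered} products and that the order of the underlying integral basis \eqref{affinebasis} of $\ghat$ is ours to choose. I would fix an order in which every negative-mode basis vector precedes every zero-mode basis vector, which in turn precedes every positive-mode basis vector. Then each basis element of $U_\Z(\ghat)$ factors as $P_{-}P_0P_{+}$, where $P_{-}$ is a product of divided powers $x_\alpha(-n)^m/m!$ ($n>0$) and coefficients of series \eqref{ubasis2} with negative modes, $P_0$ is a product of the zero-mode operators in \eqref{ubasis1}, and $P_{+}$ is a product of positive-mode operators. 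Since $\gvmlambda_\Z$ and $\imlambda_\Z$ are both $U_\Z(\ghat)\cdot v_\lambda$, it is enough to evaluate $P_{-}P_0P_{+}\cdot v_\lambda$.

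First I would dispose of $P_{+}$. Because $v_\lambda$ lies in the lowest conformal weight space $L_\lambda$, every operator $a(n)$ with $a\in\mathfrak{g}$ and $n>0$ annihilates $v_\lambda$. Hence any divided power $x_\alpha(n)^m/m!$ with $n>0$ and $m\geq 1$ kills $v_\lambda$, and for $n>0$ the coefficient of $x^k$ with $k\geq 1$ in \eqref{ubasis2} also kills $v_\lambda$ (each of its terms ends in a positive mode), while its constant term is $1$. Applying $P_{+}$ to $v_\lambda$ from the right, the rightmost nontrivial positive-mode factor therefore already yields $0$. Thus $P_{+}v_\lambda=v_\lambda$ when $P_{+}$ is empty and $P_{+}v_\lambda=0$ otherwise, so only the terms with $P_{+}=1$ survive.

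For a surviving term, $P_0v_\lambda$ lies in $(L_\lambda)_\Z$: the factors $x_\alpha(0)^m/m!$ and $\binom{h_i(0)+m-1}{m}$ are exactly the generators of the classical Kostant $\Z$-form $U_\Z(\mathfrak{g})$, which by construction preserves $(L_\lambda)_\Z=U_\Z(\mathfrak{g})\cdot v_\lambda$, while $\binom{-h_\theta(0)+\mathbf{k}+m-1}{m}$ acts on a weight vector of weight $\mu$ as the integer $\binom{-\langle\mu,\theta\rangle+\ell+m-1}{m}$ (using $\mathbf{k}=\ell\in\Z$ and the normalization $\langle\theta,\theta\rangle=2$), and so also preserves $(L_\lambda)_\Z$. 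Setting $u=P_0v_\lambda\in(L_\lambda)_\Z$, the surviving term equals $P_{-}\cdot u$ with $P=P_{-}$ of exactly the asserted form. This shows the integral form lies in the $\Z$-span of the vectors $P\cdot u$. The reverse inclusion is immediate: any $u\in(L_\lambda)_\Z$ equals $Q\cdot v_\lambda$ for some zero-mode $Q\in U_\Z(\ghat)$, and $P\in U_\Z(\ghat)$, so $P\cdot u\in U_\Z(\ghat)\cdot v_\lambda$. The identical argument handles $\imlambda_\Z$, since the $U_\Z(\ghat)$-action passes to the irreducible quotient.

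I expect the main obstacle to be the bookkeeping for the positive-mode coefficients of \eqref{ubasis2}: verifying that for $n>0$ the nonconstant coefficients annihilate $v_\lambda$ relies on the commutativity of the positive modes $h_i(nj)$ for fixed $i$ (so that each coefficient is a polynomial in commuting operators that lower the conformal weight) together with the lowest-weight property of $v_\lambda$. A secondary point needing care is that the level-dependent binomial $\binom{-h_\theta(0)+\mathbf{k}+m-1}{m}$ stabilizes $(L_\lambda)_\Z$ after specializing $\mathbf{k}=\ell$, which is exactly where integrality of $\ell$ enters.
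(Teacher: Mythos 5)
Your argument is correct and is precisely the one the paper intends: the paper's proof consists of the single remark that the claim ``follows immediately from (\ref{ubasis1}), (\ref{ubasis2}), and an appropriate choice of order on the basis (\ref{affinebasis})'', and your choice of order (negative modes, then zero modes, then positive modes), with positive-mode factors annihilating $v_\lambda$ and zero-mode factors preserving $(L_\lambda)_\mathbb{Z}$, is exactly that appropriate choice spelled out. No discrepancies.
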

\begin{proof}
 This follows immediately from (\ref{ubasis1}), (\ref{ubasis2}), and an 
appropriate choice of order on the basis (\ref{affinebasis}).
\end{proof}

The Lie-algebraic integral forms $\gvmlambda_\mathbb{Z}$ and 
$\imlambda_\mathbb{Z}$ are also vertex algebraic integral forms, as shown by 
the 
following two results proved in \cite{M2}:
\begin{theo}\label{affalgzform}
 The integral form 
$V_{\widehat{\mathfrak{g}}}(\ell,0)_\mathbb{Z}$ is
the vertex subring of $V_{\widehat{\mathfrak{g}}}(\ell,0)$ generated by the
vectors $\frac{x_\alpha(-1)^k}{k!}\mathbf{1}$ where $k\geq 0$ and $x_\alpha$ is
the root vector corresponding to the root $\alpha$ in the chosen Chevalley basis
of $\mathfrak{g}$. Moreover, if $L_\lambda$ is a finite-dimensional 
$\mathfrak{g}$-module with highest weight vector $v_\lambda$ and $W$ is 
$V_{\widehat{\mathfrak{g}}}(\ell,L_\lambda)$ or 
$L_{\widehat{\mathfrak{g}}}(\ell,L_\lambda)$, 
then $W_\mathbb{Z}$ is the 
 $V_{\widehat{\mathfrak{g}}}(\ell,0)_\mathbb{Z}$-module generated by 
$v_\lambda$.
\end{theo}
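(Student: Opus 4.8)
The plan is to prove both assertions by establishing two containments, working throughout with the description of the integral forms as $U_\Z(\ghat)\cdot v_\lambda$ (with $v_\lambda=\mathbf 1$ in the vacuum case) together with the explicit spanning set of Proposition \ref{affintzformbasis}. Let $R$ denote the vertex subring of $\gvmzero$ generated by the vectors $\frac{x_\alpha(-1)^k}{k!}\mathbf 1$, and, once the vacuum case is settled, let $M$ denote the $\gvmzero_\Z$-submodule of $W$ generated by $v_\lambda$. For the inclusions $R\subseteq\gvmzero_\Z$ and $M\subseteq W_\Z$, I would first note that each generator $\frac{x_\alpha(-1)^k}{k!}\mathbf 1$ and the vector $v_\lambda$ manifestly lie in the corresponding Lie-algebraic form, since $\frac{(x_\alpha\otimes t^{-1})^k}{k!}\in U_\Z(\ghat)$ by definition. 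It then suffices to check that $\gvmzero_\Z$ and $W_\Z$ are stable under the modes of elements of $\gvmzero_\Z$; using \eqref{liealgops} and Proposition \ref{affintzformbasis}, these modes are integral combinations of ordered products of the operators in \eqref{ubasis1} and the coefficients of the Heisenberg exponentials \eqref{ubasis2}, all of which lie in $U_\Z(\ghat)$ and hence preserve $U_\Z(\ghat)\cdot v_\lambda$.

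The substance is the reverse inclusions $\gvmzero_\Z\subseteq R$ and $W_\Z\subseteq M$. By Proposition \ref{affintzformbasis} it is enough to show that each spanning vector $P\cdot v_\lambda$ lies in $R$ (resp. $M$), where $P$ is an ordered product of operators $\frac{x_\alpha(-n)^m}{m!}$ and Heisenberg-exponential coefficients. Two closure properties drive this. First, since $x_\alpha(-1)\mathbf 1\in R$ and $R$ is closed under all its modes $(x_\alpha(-1)\mathbf 1)_j=x_\alpha(j)$, the ring $R$ is stable under every operator $x_\alpha(j)$, and likewise for $M$. Second, the identity $v_{-n}\mathbf 1=\frac{L(-1)^{n-1}}{(n-1)!}v$ shows that $R$ is stable under the divided powers $\frac{L(-1)^{n-1}}{(n-1)!}$, which for instance already places each $x_\alpha(-n)\mathbf 1=\frac{L(-1)^{n-1}}{(n-1)!}x_\alpha(-1)\mathbf 1$ in $R$.

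To produce the divided powers $\frac{x_\alpha(-n)^m}{m!}v_\lambda$ themselves I would exploit the crucial fact that $x_\alpha$ is a root vector, so $\langle x_\alpha,x_\alpha\rangle=0$ and all the operators $x_\alpha(j)$ commute. Writing $X_\alpha(x)=Y(x_\alpha(-1)\mathbf 1,x)=\sum_j x_\alpha(j)x^{-j-1}$, this null-field property yields $Y\bigl(\exp(y\,x_\alpha(-1))\mathbf 1,x\bigr)=\exp\bigl(yX_\alpha(x)\bigr)$ with no contraction corrections. Expanding the right-hand side applied to $v_\lambda$ and extracting the coefficient of $y^m x^{(n-1)m}$ produces $\frac{x_\alpha(-n)^m}{m!}v_\lambda$ together with a $\Z$-combination of products $\prod_p\frac{x_\alpha(-p)^{b_p}}{b_p!}v_\lambda$ in which not all parts equal $n$ (so each multiplicity $b_p$ is strictly less than $m$). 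Since this whole coefficient is a single mode of the generator $\frac{x_\alpha(-1)^m}{m!}\mathbf 1$ applied to $v_\lambda$, it lies in $R$ (resp. $M$), so I can solve for the pure divided power by induction on the total power $m$, simultaneously establishing that $R$ is stable under left multiplication by the operators $\frac{x_\alpha(-n)^m}{m!}$ (needed to know the correction products lie in $R$ and to assemble the ordered products $P\cdot v_\lambda$). I expect this nested triangular induction to be the main obstacle, and it is compounded by the fact that distinct root vectors do not commute, so one must respect the ordering built into the basis \eqref{affinebasis} when passing from the single-root statements to the general product $P$.

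Two further points complete the argument. The Heisenberg-exponential coefficients of $P$, built from the $h_i(-nj)$, are \emph{not} null fields, so they are handled not by the displayed exponential identity but by the integrality identities of Garland \cite{G1} expressing products of divided powers of opposite root vectors $x_{\pm\alpha}$ in terms of these exponentials; as $R$ contains all the $x_{\pm\alpha}$ divided powers and is closed under products, the exponential coefficients are thereby placed in $R$. Finally, for the module statement the vacuum case gives $\gvmzero_\Z=R$, and the only genuinely new input is $(L_\lambda)_\Z\subseteq M$. Here the degenerate case $n=0$ of the extraction is clean: for $u_m=\frac{x_\alpha(-1)^m}{m!}\mathbf 1$ the mode $(u_m)_{m-1}v_\lambda=\frac{x_\alpha(0)^m}{m!}v_\lambda$ picks out exactly the pure divided power, because the constraint $\sum_l i_l=0$ together with $x_\alpha(j)v_\lambda=0$ for $j>0$ forces all indices to vanish, with no correction terms. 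With $(L_\lambda)_\Z\subseteq M$ in hand, the same machinery as in the vacuum case gives $W_\Z\subseteq M$, completing the proof.
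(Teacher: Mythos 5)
The paper offers no proof of Theorem \ref{affalgzform} to compare against: it is quoted from \cite{M2}. On its own terms your outline follows what is surely the intended (and standard) route, and its two key ingredients are correct: the null-field identity $Y\bigl(\tfrac{x_\alpha(-1)^k}{k!}\mathbf{1},x\bigr)=\tfrac{1}{k!}\bigl(\sum_j x_\alpha(j)x^{-j-1}\bigr)^k$, valid because $[x_\alpha,x_\alpha]=0$ and $\langle x_\alpha,x_\alpha\rangle=0$ kill all contractions, and the triangular extraction of the pure divided power from a single mode of $\tfrac{x_\alpha(-1)^m}{m!}\mathbf{1}$, the corrections being integral products of divided powers of strictly smaller exponent. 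Since $U_\Z(\ghat)$ is by definition generated by the $\tfrac{(x_\alpha\otimes t^n)^k}{k!}$, stability of $R$ and $M$ under these operators is really all the hard direction requires; your detour through the ordered basis of Proposition \ref{affintzformbasis} and Garland's identities for the Heisenberg exponentials is legitimate but not necessary for that inclusion.

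Two repairs are needed. First, your ``easy'' inclusions rest on the assertion that $\gvmzero_\Z$ and $W_\Z$ are stable under the modes of all elements of $\gvmzero_\Z$; that assertion is precisely the claim that $U_\Z(\ghat)\mathbf{1}$ is a vertex subring, which is as deep as the rest of the theorem and cannot be read off from Proposition \ref{affintzformbasis} (vertex operators of general spanning vectors carry genuine normal-ordering contractions among the $h_i$). The clean fix is to run everything through $R$: the modes of the generators lie in $U_\Z(\ghat)$ by the null-field identity, the iterate formula makes modes of iterates integral combinations of products of modes, hence every mode of every element of $R$ lies in $U_\Z(\ghat)$, and $R\subseteq U_\Z(\ghat)\mathbf{1}$, $M\subseteq U_\Z(\ghat)v_\lambda$ follow at once. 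Second, your induction hypothesis covers only $\tfrac{x_\alpha(-n)^m}{m!}$ with $n>0$, but the correction terms in the extraction involve divided powers $\tfrac{x_\alpha(j)^b}{b!}$ for modes $j$ of \emph{every} sign, and these do not annihilate general elements of $R$ or $M$; the hypothesis must be stated for all $j\in\Z$ (the identical coefficient extraction proves it), otherwise the induction does not close. With these adjustments the argument is complete.
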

\begin{corol}\label{affalgzformcorol}
 The integral form $L_{\widehat{\mathfrak{g}}}(\ell,0)_\mathbb{Z}$ of 
$L_{\widehat{\mathfrak{g}}}(\ell,0)$ is the integral form of 
$L_{\widehat{\mathfrak{g}}}(\ell,0)$ as a vertex algebra generated by the 
vectors $\frac{x_\alpha(-1)^k}{k!}\mathbf{1}$ where $\alpha$ is a root and 
$k\geq 0$.
\end{corol}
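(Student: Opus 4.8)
The plan is to deduce the corollary from the first assertion of Theorem~\ref{affalgzform} by transporting the generating statement through the quotient map onto the irreducible quotient. Since $\ell$ is a nonnegative integer we have $\ell\neq -h$, so both $\gvmzero$ and $\imzero$ are vertex operator algebras, and there is a surjective vertex algebra homomorphism $\pi\colon\gvmzero\to\imzero$ exhibiting $\imzero$ as the unique irreducible quotient of $\gvmzero$. This $\pi$ is in particular a homomorphism of $\ghat$-modules with $\pi(\mathbf{1})=\mathbf{1}$, and it sends each generator $\frac{x_\alpha(-1)^k}{k!}\mathbf{1}$ of $\gvmzero_\Z$ to the corresponding vector $\frac{x_\alpha(-1)^k}{k!}\mathbf{1}$ in $\imzero$.

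First I would record the elementary fact that a vertex algebra homomorphism carries the vertex subring generated by a set onto the vertex subring generated by the image of that set. Writing $\langle S\rangle$ for the vertex subring generated by $S\subseteq\gvmzero$, the image $\pi(\langle S\rangle)$ is a vertex subring of $\imzero$ containing $\pi(S)$, and hence contains $\langle\pi(S)\rangle$; conversely $\pi^{-1}(\langle\pi(S)\rangle)$ is a vertex subring of $\gvmzero$ containing $S$ (using $\pi(\mathbf{1})=\mathbf{1}$ and $\pi(u_n v)=\pi(u)_n\pi(v)$), so it contains $\langle S\rangle$, giving $\pi(\langle S\rangle)\subseteq\langle\pi(S)\rangle$ and therefore equality. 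Taking $S=\{\frac{x_\alpha(-1)^k}{k!}\mathbf{1}\}$ and invoking the first part of Theorem~\ref{affalgzform}, which identifies $\gvmzero_\Z$ with $\langle S\rangle$, I conclude that $\pi(\gvmzero_\Z)$ is precisely the vertex subring of $\imzero$ generated by the vectors $\frac{x_\alpha(-1)^k}{k!}\mathbf{1}$.

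It then remains to identify this image with $\imzero_\Z$. Both integral forms are by definition $U_\Z(\ghat)\cdot\mathbf{1}$, computed inside $\gvmzero$ and inside $\imzero$ respectively; since $\pi$ is $\ghat$-equivariant, hence $U_\Z(\ghat)$-equivariant, and fixes the vacuum, I have $\pi(\gvmzero_\Z)=\pi(U_\Z(\ghat)\cdot\mathbf{1})=U_\Z(\ghat)\cdot\mathbf{1}=\imzero_\Z$. Combining this equality with the previous paragraph shows that $\imzero_\Z$ equals the vertex subring of $\imzero$ generated by the $\frac{x_\alpha(-1)^k}{k!}\mathbf{1}$, which is exactly the statement of the corollary.

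I do not anticipate a genuine obstacle: the substance lies entirely in Theorem~\ref{affalgzform}, and the corollary is a formal consequence of the surjectivity of $\pi$. The one point that deserves care is the \emph{equality} $\pi(\gvmzero_\Z)=\imzero_\Z$ rather than a mere inclusion, which is why I would argue through the common presentation $U_\Z(\ghat)\cdot\mathbf{1}$; this ensures that the generated vertex subring exhausts all of $\imzero_\Z$ and is not a proper sublattice.
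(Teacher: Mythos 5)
Your argument is correct: the paper itself gives no proof of this corollary (both the theorem and the corollary are cited as results proved in [M2]), and your derivation via the quotient map $\pi\colon\gvmzero\to\imzero$ is exactly the natural way to obtain it from the first assertion of Theorem~\ref{affalgzform}. The two key points you isolate --- that a surjective vertex ring homomorphism sends the subring generated by $S$ onto the subring generated by $\pi(S)$, and that $\pi(U_\Z(\ghat)\cdot\mathbf{1})=U_\Z(\ghat)\cdot\mathbf{1}$ by $\ghat$-equivariance --- are both sound and suffice.
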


It was also shown in \cite{M2} that the graded $\mathbb{Z}$-dual of a 
$\gvmzero_\mathbb{Z}$- or $\imzero_\mathbb{Z}$-module is also a 
$\gvmzero_\Z$- or $\imzero_\Z$-module. In particular, suppose that $\lambda$ 
is a dominant integral weight of $\mathfrak{g}$ satisfying 
$\langle\lambda,\theta\rangle\leq\ell$ where $\theta$ is the highest root of 
$\mathfrak{g}$, so that $\imlambda$ is an $\imzero$-module. Then the graded 
$\Z$-dual of $\imlambda_\Z$ is the $\imzero_\Z$-module $\imlambda_\Z'$, which 
is an integral form of $L_{\widehat{\mathfrak{g}}}(\ell,L_{\lambda^*})$ that is 
generally different from $L_{\widehat{\mathfrak{g}}}(\ell,L_{\lambda^*})_\Z.$ 

\begin{rema}
 The precise identity of the integral form $\imlambda_\Z'\subseteq L_{\ghat}(\ell,L_{\lambda^*})$ depends on the choice of isomorphism $\imlambda'\cong L_{\ghat}(\ell, L_{\lambda^*})$. Because $\imlambda$ is irreducible, such an isomorphism amounts to a $\mathfrak{g}$-module isomorphism $L_\lambda^*\cong L_{\lambda^*}$ , in which the $\Z$-dual of $(L_\lambda)_\Z$ is identified with a lattice $(L_\lambda)_\Z'\subseteq L_{\lambda^*}$.
\end{rema}

 We now come to our main theorem on integral intertwining operators among 
$\imzero$-modules. We consider intertwining operators among three 
$L_{\widehat{\mathfrak{g}}}(\ell,0)$-modules 
$L_{\widehat{\mathfrak{g}}}(\ell,L_{\lambda_i})$ for $i=1,2,3$. In the 
statement 
of the following theorem, we use 
$U_\mathbb{Z}(\mathfrak{g})$ to denote 
$U_\mathbb{Z}(\widehat{\mathfrak{g}})\cap 
U(\mathfrak{g})$, which by (\ref{ubasis1}) has as a basis ordered monomials in 
elements of the forms
\begin{equation*}
 \dfrac{x_\alpha^m}{m!},\,\,\,\binom{h_i+m-1}{m}
\end{equation*}
where $m\geq 0$ and we identify $x_\alpha=x_\alpha\otimes t^0$ and 
$h_i=h_i\otimes t^0$.
\begin{theo}
 The lattice of intertwining operators within 
$V^{L_{\widehat{\mathfrak{g}}}(\ell,L_{\lambda_3})}_{L_{\widehat{\mathfrak{g}}}
(\ell,L_{\lambda_1})\,L_{\widehat{\mathfrak{g}}}(\ell,L_{\lambda_2})}$ which 
are 
integral with respect to 
$L_{\widehat{\mathfrak{g}}}(\ell,L_{\lambda_1})_\mathbb{Z}$, 
$L_{\widehat{\mathfrak{g}}}(\ell,L_{\lambda_2})_\mathbb{Z}$, and 
$L_{\widehat{\mathfrak{g}}}(\ell,L_{\lambda_3^*})_\mathbb{Z}'$ is isomorphic to
 \begin{equation*}
\mathrm{Hom}_{U_\mathbb{Z}(\mathfrak{g})}(((L_{\lambda_1})_\mathbb{Z}\otimes_{
\mathbb{Z}}(L_{\lambda_2})_\mathbb{Z})/W_\mathbb{Z}, 
(L_{\lambda_3^*})_\mathbb{Z}')
 \end{equation*}
where $W_{\mathbb{Z}}$ is the $U_\mathbb{Z}(\mathfrak{g})$-submodule of 
$(L_{\lambda_1})_\mathbb{Z}\otimes_{\mathbb{Z}}(L_{\lambda_2})_\mathbb{Z}$ 
generated by vectors of the form
\begin{equation*}
v_{\lambda_1}\otimes\dfrac{x_{\theta}^{\ell-\langle\lambda_1,\theta\rangle+1}}{
(\ell-\langle\lambda_1,\theta\rangle+1)!}\cdot w.
\end{equation*}
Here $v_{\lambda_1}$ is a highest weight vector generating 
$(L_{\lambda_1})_\mathbb{Z}$ as a $U_{\mathbb{Z}}(\mathfrak{g})$-module, and 
$w\in (L_{\lambda_2})_\mathbb{Z}$.
\end{theo}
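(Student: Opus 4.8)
The plan is to analyze the linear isomorphism $\mathcal{Y}\mapsto\pi(\mathcal{Y})$ of \eqref{intwopiso}, which identifies the full space $V^{L_{\ghat}(\ell,L_{\lambda_3})}_{L_{\ghat}(\ell,L_{\lambda_1})\,L_{\ghat}(\ell,L_{\lambda_2})}$ with $\mathrm{Hom}_{\mathfrak{g}}((L_{\lambda_1}\otimes L_{\lambda_2})/W,L_{\lambda_3})$, and to show it restricts to an isomorphism on the integral sublattices. Everything reduces to the central claim that $\mathcal{Y}$ is integral with respect to $L_{\ghat}(\ell,L_{\lambda_1})_\Z$, $L_{\ghat}(\ell,L_{\lambda_2})_\Z$, and $L_{\ghat}(\ell,L_{\lambda_3^*})_\Z'$ if and only if $\pi(\mathcal{Y})$ maps $(L_{\lambda_1})_\Z\otimes_\Z(L_{\lambda_2})_\Z$ into $(L_{\lambda_3^*})_\Z'$. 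Granting the claim, the theorem follows formally: any $\mathbb{C}$-linear $\mathfrak{g}$-homomorphism sending the lattice $(L_{\lambda_1})_\Z\otimes_\Z(L_{\lambda_2})_\Z$ into $(L_{\lambda_3^*})_\Z'$ is automatically $U_\Z(\mathfrak{g})$-linear there, because a $\mathfrak{g}$-map commutes over $\mathbb{C}$ with the divided powers $x_\alpha^k/k!$ generating $U_\Z(\mathfrak{g})$, and it annihilates $W_\Z$ since it annihilates $W\supseteq W_\Z$. Conversely $W_\Z\otimes_\Z\mathbb{C}=W$, because the integral generators $v_{\lambda_1}\otimes\frac{x_\theta^{s}}{s!}\cdot w$ (with $s=\ell-\langle\lambda_1,\theta\rangle+1$) differ from the complex generators of $W$ only by the scalar $s!$, so every $U_\Z(\mathfrak{g})$-homomorphism out of $((L_{\lambda_1})_\Z\otimes_\Z(L_{\lambda_2})_\Z)/W_\Z$ extends to an element of $\mathrm{Hom}_{\mathfrak{g}}((L_{\lambda_1}\otimes L_{\lambda_2})/W,L_{\lambda_3})$. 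Since $\pi$ is injective, this yields the asserted lattice isomorphism.

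For the easy direction of the central claim, if $\mathcal{Y}$ is integral then for $u_{(1)}\in(L_{\lambda_1})_\Z$ and $u_{(2)}\in(L_{\lambda_2})_\Z$ the leading coefficient $u_{(1)}(0)u_{(2)}=\pi(\mathcal{Y})(u_{(1)}\otimes u_{(2)})$ lies in $L_{\ghat}(\ell,L_{\lambda_3^*})_\Z'$ and sits in the lowest conformal weight space $L_{\lambda_3}$; since the lowest weight space of the graded dual $L_{\ghat}(\ell,L_{\lambda_3^*})_\Z'$ is exactly $(L_{\lambda_3^*})_\Z'$, the conclusion follows.

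The hard direction is the main obstacle. By Theorem \ref{intwopgen}, applied with the singleton generating sets $\{v_{\lambda_1}\}$ and $\{v_{\lambda_2}\}$ (which generate the module forms by Theorem \ref{affalgzform}), it suffices to show $\mathcal{Y}(v_{\lambda_1},x)v_{\lambda_2}\in L_{\ghat}(\ell,L_{\lambda_3^*})_\Z'\{x\}$ under the hypothesis that $\pi(\mathcal{Y})$ is integral. By the definition of the graded dual this is equivalent to $\langle P\cdot v_{\lambda_3^*},\mathcal{Y}(v_{\lambda_1},x)v_{\lambda_2}\rangle\in\Z\{x\}$ as $P\cdot v_{\lambda_3^*}$ runs over the spanning set of $L_{\ghat}(\ell,L_{\lambda_3^*})_\Z$ from Proposition \ref{affintzformbasis}, with $P$ a product of negative-mode divided powers $x_\alpha(-n)^m/m!$ and coefficients of the $h_i$-exponentials. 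Using invariance of the natural pairing between $L_{\ghat}(\ell,L_{\lambda_3^*})$ and $L_{\ghat}(\ell,L_{\lambda_3})$, under which $\langle a(n)w',w\rangle=-\langle w',a(-n)w\rangle$, I would move $P$ to the other side of the pairing, converting it into an integral operator $Q$ built from positive-mode divided powers $x_\alpha(n)^m/m!$ (and positive-mode $h_i$-exponentials), so that $\langle P\cdot v_{\lambda_3^*},\mathcal{Y}(v_{\lambda_1},x)v_{\lambda_2}\rangle=\pm\langle v_{\lambda_3^*},Q\cdot\mathcal{Y}(v_{\lambda_1},x)v_{\lambda_2}\rangle$.

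To finish, I would invoke the commutator formula for intertwining operators (the coefficient of $x_0^{-1}$ in \eqref{intwopjac}, refined by Proposition \ref{commform}): for $m>0$ and a lowest-weight vector $u$ one has $a(m)\,\mathcal{Y}(u,x)v_{\lambda_2}=x^m\,\mathcal{Y}(a\cdot u,x)v_{\lambda_2}$, because $a(m)v_{\lambda_2}=0$ and $a(j)u=0$ for $j>0$. Iterating this through the divided powers comprising $Q$ rewrites $Q\cdot\mathcal{Y}(v_{\lambda_1},x)v_{\lambda_2}$ as an integral combination of terms $x^{k}\,\mathcal{Y}(\tilde u_{(1)},x)v_{\lambda_2}$ with $\tilde u_{(1)}\in U_\Z(\mathfrak{g})\cdot v_{\lambda_1}=(L_{\lambda_1})_\Z$; pairing with the lowest-weight vector $v_{\lambda_3^*}$ then extracts only the leading coefficients $\tilde u_{(1)}(0)v_{\lambda_2}=\pi(\mathcal{Y})(\tilde u_{(1)}\otimes v_{\lambda_2})\in(L_{\lambda_3^*})_\Z'$, which pair integrally with $v_{\lambda_3^*}\in(L_{\lambda_3^*})_\Z$. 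The points I expect to require the most care are the exact behavior of the $h_i$-exponential generators both under the pairing-adjoint and under this commutator reduction, and confirming the cancellation that keeps the second argument equal to $v_{\lambda_2}$ rather than producing descendants.
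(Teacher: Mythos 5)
Your proposal is correct and follows essentially the same route as the paper: the forward direction via the lowest-weight coefficient $w_{(1)}(0)w_{(2)}$, and the converse by reducing to generators with Theorem \ref{intwopgen} and then transferring the negative-mode operators $P$ across the pairing and through the commutator formula $a(m)\mathcal{Y}(w_{(1)},x)w_{(2)}=x^m\mathcal{Y}(a(0)w_{(1)},x)w_{(2)}$ onto zero modes acting on $(L_{\lambda_1})_\Z$. The $h_i$-exponential step you flag does resolve exactly as you anticipate: its adjoint commutes past $\mathcal{Y}$ to give $(1-x^ny)^{h_i(0)}$ acting on the first argument, whose coefficients $\pm\binom{h_i(0)}{k}x^{nk}$ preserve $(L_{\lambda_1})_\Z$ because $h_i(0)$ acts integrally on a dominant-integral-weight lattice.
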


\begin{rema}
 Note that $(L_{\lambda_3^*})'_\Z$ is an integral form of $L_{\lambda_3}$, the 
lowest conformal weight space of $L_{\widehat{\mathfrak{g}}}(\ell, 
L_{\lambda_3})$.
\end{rema}

\begin{rema}
 We use vectors of the form 
$v_{\lambda_1}\otimes\frac{x_{\theta}^{\ell-\langle\lambda_1,\theta\rangle+1}}{
(\ell-\langle\lambda_1,\theta\rangle+1)!}\cdot w$ to generate $W_\mathbb{Z}$, 
rather than vectors of the form $v_{\lambda_1}\otimes 
x_{\theta}^{\ell-\langle\lambda_1,\theta\rangle+1}\cdot w$, to avoid 
unnecessary 
torsion in the quotient $((L_{\lambda_1})_\mathbb{Z}\otimes_\mathbb{Z} 
(L_{\lambda_2})_\mathbb{Z})/W_\mathbb{Z}$.
\end{rema}

\begin{rema}
 Note that this theorem determines which intertwining operators of type 
$\binom{L_{\widehat{\mathfrak{g}}}(\ell,L_{\lambda_3})}{L_{\widehat{\mathfrak{g}}}
(\ell,L_{\lambda_1})\,L_{\widehat{\mathfrak{g}}}(\ell,L_{\lambda_2})}$ satisfy
\begin{equation*}
 \langle w_{(3)}',\mathcal{Y}(w_{(1)},x)w_{(2)}\rangle\in\mathbb{Z}\lbrace 
x\rbrace
\end{equation*}
for any $w_{(1)}\in L_{\widehat{\mathfrak{g}}}(\ell, 
L_{\lambda_1})_\mathbb{Z}$, 
$w_{(2)}\in L_{\widehat{\mathfrak{g}}}(\ell, L_{\lambda_2})_\mathbb{Z}$, and 
$w_{(3)}'\in L_{\widehat{\mathfrak{g}}}(\ell, L_{\lambda_3^*})_\mathbb{Z}$, 
rather than which intertwining operators satisfy
\begin{equation*}
 \mathcal{Y}: L_{\widehat{\mathfrak{g}}}(\ell,L_{\lambda_1})_\mathbb{Z}\otimes 
L_{\widehat{\mathfrak{g}}}(\ell,L_{\lambda_2})_\mathbb{Z}\rightarrow 
L_{\widehat{\mathfrak{g}}}(\ell,L_{\lambda_3})_\mathbb{Z}\lbrace x\rbrace.
\end{equation*}
It is not clear that there will generally be any non-zero intertwining 
operators 
which satisfy the latter condition.
\end{rema}

\begin{proof}
 First suppose an intertwining operator $\mathcal{Y}$ is integral with respect 
to $L_{\widehat{\mathfrak{g}}}(\ell,L_{\lambda_1})_\mathbb{Z}$, 
$L_{\widehat{\mathfrak{g}}}(\ell,L_{\lambda_2})_\mathbb{Z}$, and 
$L_{\widehat{\mathfrak{g}}}(\ell,L_{\lambda_3^*})_\mathbb{Z}'$. Then the map
 \begin{equation*}
  \pi(\mathcal{Y}): w_{(1)}\otimes w_{(2)}\mapsto w_{(1)}(0)w_{(2)}
 \end{equation*}
given in (\ref{intwopiso}) sends 
$(L_{\lambda_1})_\mathbb{Z}\otimes_{\mathbb{Z}}(L_{\lambda_2})_\mathbb{Z}$ into 
$(L_{\lambda_3^*})_\mathbb{Z}'$ and equals zero on $W_\mathbb{Z}\subseteq 
W=\langle v_{\lambda_1}\otimes 
x_{\theta}^{\ell-\langle\lambda_1,\theta\rangle+1}\cdot w\rangle$.

Conversely, suppose
\begin{equation*}
 f: 
(L_{\lambda_1})_\mathbb{Z}\otimes_{\mathbb{Z}}(L_{\lambda_2})_\mathbb{Z}
\rightarrow (L_{\lambda_3^*})_\mathbb{Z}'
\end{equation*}
is a $U_\mathbb{Z}(\mathfrak{g})$-homomorphism which equals zero on 
$W_\mathbb{Z}$. Then $f$ extends to a $U(\mathfrak{g})$-homomorphism from 
$L_{\lambda_1}\otimes L_{\lambda_2}$ to $L_{\lambda_3}$ 
which equals zero on $W$. Thus since $\pi$ given in (\ref{intwopiso}) is an 
isomorphism, $f$ induces a unique intertwining operator
\begin{equation*}
 \mathcal{Y}_f: L_{\widehat{\mathfrak{g}}}(\ell,L_{\lambda_1})\otimes  
L_{\widehat{\mathfrak{g}}}(\ell,L_{\lambda_2})\rightarrow  
L_{\widehat{\mathfrak{g}}}(\ell,L_{\lambda_3})\lbrace x\rbrace
\end{equation*}
which satisfies $\pi(\mathcal{Y}_f)=f$ and 
\begin{equation}\label{intwopintegrality}
 \langle w_{(3)}', \mathcal{Y}_f(w_{(1)},x)w_{(2)}\rangle\in\mathbb{Z}\lbrace 
x\rbrace
\end{equation}
for $w_{(1)}\in (L_{\lambda_1})_{\mathbb{Z}}$, $w_{(2)}\in 
(L_{\lambda_2})_{\mathbb{Z}}$, and $w_{(3)}'\in (L_{\lambda_3^*})_\mathbb{Z}$. 
To complete the proof, it is enough to show that \eqref{intwopintegrality} 
holds 
for all $w_{(1)}\in L_{\widehat{\mathfrak{g}}}(\ell,L_{\lambda_1})_\mathbb{Z}$, 
$w_{(2)}\in L_{\widehat{\mathfrak{g}}}(\ell,L_{\lambda_2})_\mathbb{Z}$, and 
$w_{(3)}'\in L_{\widehat{\mathfrak{g}}}(\ell,L_{\lambda_3^*})_\mathbb{Z}$.

Since $(L_{\lambda_1})_\mathbb{Z}$ and $(L_{\lambda_2})_\mathbb{Z}$ generate $ 
L_{\widehat{\mathfrak{g}}}(\ell,L_{\lambda_1})_\mathbb{Z}$ and  
$L_{\widehat{\mathfrak{g}}}(\ell,L_{\lambda_2})$, respectively, as $ 
L_{\widehat{\mathfrak{g}}}(\ell,0)_\mathbb{Z}$-modules, by Theorem 
\ref{intwopgen} it is enough to show that (\ref{intwopintegrality}) holds for 
$w_{(1)}\in (L_{\lambda_1})_{\mathbb{Z}}$, $w_{(2)}\in 
(L_{\lambda_2})_{\mathbb{Z}}$, and $w_{(3)}'\in 
L_{\widehat{\mathfrak{g}}}(\ell,L_{\lambda_3^*})_\mathbb{Z}$. Since 
(\ref{intwopintegrality}) holds when $w_{(3)}'\in 
(L_{\lambda_3^*})_\mathbb{Z}$, 
by Proposition \ref{affintzformbasis} it is enough to show that if it holds for 
some particular $w_{(3)}'\in 
L_{\widehat{\mathfrak{g}}}(\ell,L_{\lambda_3^*})_\mathbb{Z}$, then it also 
holds 
for the coefficients of powers of $y$ in
\begin{equation*}
 \mathrm{exp}\left( x_\alpha(-n) y\right)\cdot w_{(3)}'
\end{equation*}
where $\alpha$ is a root and $n>0$, and for coefficients of powers of $y$ in
\begin{equation*}
 \mathrm{exp}\left(\sum_{j\geq 1} h_i(-nj)\dfrac{y^j}{j}\right)\cdot w_{(3)}'
\end{equation*}
where $n>0$.

We will need to use the following commutator formula which holds for any 
$a\in\mathfrak{g}$, $m\in\mathbb{Z}$, and intertwining operator $\mathcal{Y}$, 
which follows from the Jacobi identity 
(\ref{intwopjac}) by setting $v=a(-1)\mathbf{1}$ and taking the coefficient of 
$x_0^{-1} x_1^{-m-1}$: for $w_{(1)}\in L_{\lambda_1}$,
\begin{equation*}
 [a(m),\mathcal{Y}(w_{(1)},x)]=x^m\mathcal{Y}(a(0)w_{(1)},x).
\end{equation*}
Then if also $w_{(2)}\in L_{\lambda_2}$ and $m>0$,
\begin{equation}\label{intwopcomm}
 a(m)\mathcal{Y}(w_{(1)},x)w_{(2)}=x^m\mathcal{Y}(a(0)w_{(1)},x)w_{(2)}.
\end{equation}
We will also use the fact that for any 
$L_{\widehat{\mathfrak{g}}}(\ell,0)$-module $W$, $w\in W$, $w'\in W'$, 
$a\in\mathfrak{g}$, and $n\in\mathbb{Z}$,
\begin{equation*}
 \langle a(n)w',w\rangle =\langle w', -a(-n)w\rangle.
\end{equation*}
 This follows from (\ref{oppopdef}) and (\ref{liealgops}).

Now suppose (\ref{intwopintegrality}) holds for $w_{(3)}'\in 
L_{\widehat{\mathfrak{g}}}(\ell, L_{\lambda_3^*})_\mathbb{Z}$ and $n>0$; then 
by 
(\ref{intwopcomm}),
\begin{align*}
 \langle\mathrm{exp}(x_\alpha(-n)y)\cdot w_{(3)}', & \,
\mathcal{Y}_f(w_{(1)},x)w_{(2)}\rangle  =\langle 
w_{(3)}',\mathrm{exp}(-x_\alpha(n)y)\mathcal{Y}_f (w_{(1)},x)w_{(2)}\rangle\\
 &=\langle w_{(3)}',\mathcal{Y}_f(\mathrm{exp}(-x_\alpha(0) x^n y)\cdot 
w_{(1)},x)w_{(2)}\in\mathbb{Z}\lbrace x,y\rbrace
\end{align*}
because for any $m\geq 0$, $\frac{x_\alpha(0)^m}{m!}\cdot w_{(1)}\in 
(L_{\lambda_1})_\mathbb{Z}$. Additionally, for each $n>0$,
\begin{align*}
 & \left\langle\mathrm{exp}  \left(\sum_{j\geq 1} 
h_i(-nj)\dfrac{y^j}{j}\right)\cdot 
w_{(3)}',\,\mathcal{Y}_f(w_{(1)},x)w_{(2)}\right\rangle \\
 &\hspace{3em} =\left\langle w_{(3)}',\mathrm{exp}\left(\sum_{j\geq 1} 
-h_i(nj)\dfrac{y^j}{j}\right)\mathcal{Y}_f(w_{(1)},x)w_{(2)}\right\rangle\\
 &\hspace{3em} =\left\langle w_{(3)}',\,\mathcal{Y}_f 
(\mathrm{exp}\left(-h_i(0)\sum_{j\geq 1} 
(-1)^j\dfrac{(-x^n y)^j}{j}\right)\cdot w_{(1)},x)w_{(2)}\right\rangle \\
 &\hspace{3em} =\langle 
w_{(3)}',\,\mathcal{Y}_f(\mathrm{exp}(h_i(0)\mathrm{log}(1-x^n y))\cdot 
w_{(1)},x)w_{(2)}\rangle\\
 &\hspace{3em} =\langle w_{(3)}',\,\mathcal{Y}_f((1-x^n y)^{h_i(0)}\cdot 
w_{(1)},x)w_{(2)}\rangle\in\mathbb{Z}\lbrace x,y\rbrace
\end{align*}
because
\begin{equation*}
 (1-x^n y)^{h_i(0)}=\sum_{k\geq 0}(-1)^k\binom{h_i(0)}{k} (x^n y)^k.
\end{equation*}
Note that $\binom{h_i(0)}{k}\cdot w_{(1)}\in (L_{\lambda_1})_\mathbb{Z}$ 
because 
$\lambda_1$ is a dominant integral weight of $\mathfrak{g}$, and thus $h_i(0)$ 
acts on basis elements of $(L_{\lambda_1})_\mathbb{Z}$ as integers. This 
completes the proof.
\end{proof}

\section{Lattice vertex operator algebras, their modules, and intertwining 
operators}

In this section we recall the construction of vertex operator algebras from 
even 
lattices, their modules, and intertwining operators among their modules; see 
\cite{FLM}, \cite{DL}, and \cite{LL} for more details. We fix a nondegenerate 
even lattice $L$ with symmetric bilinear form 
$\left\langle\cdot,\cdot\right\rangle $, and then set 
$\mathfrak{h}=\mathbb{C}\otimes_\mathbb{Z} L$, an abelian Lie algebra. Thus we 
can form the Heisenberg vertex operator algebra 
$M(1)=V_{\widehat{\mathfrak{h}}}(1,0)$.

We also need the twisted group algebra of $L^\circ$, the dual lattice of $L$, 
which is constructed from a central extension of $L^\circ$ by a finite cyclic 
group. For our purposes here, we may view the twisted group algebra 
$\mathbb{C}\lbrace L^\circ\rbrace$  as the associative algebra with basis 
vectors $e_\beta$ for $\beta\in L^\circ$ with identity $e_0$ and product
\begin{equation*}
 e_\beta e_\gamma = \varepsilon(\beta,\gamma) e_{\beta+\gamma}.
\end{equation*}
Here $\varepsilon$ maps each pair of lattice elements to a root of unity and 
may 
be chosen to satisfy
\begin{equation*} 
\varepsilon(\alpha+\beta,\gamma)=\varepsilon(\alpha,\gamma)\varepsilon(\beta,
\gamma),\;\;\;\varepsilon(\alpha,\beta+\gamma)=\varepsilon(\alpha,
\beta)\varepsilon(\alpha,\gamma)
\end{equation*}
for all $\alpha,\beta,\gamma\in L^\circ$. We will also use the commutator map 
$c$ of our central extension, which is defined by
\begin{equation*}
 e_\beta e_\gamma = c(\beta,\gamma) e_\gamma e_\beta
\end{equation*}
for $\beta,\gamma\in L^\circ$. Note that from the definitions,
\begin{equation}\label{commcocycle}
 c(\beta,\gamma)=\varepsilon(\beta,\gamma)\varepsilon(\gamma,\beta)^{-1}
\end{equation}
for all $\beta,\gamma\in L^\circ$. The commutator map $c$ is required to satisfy
\begin{equation*}
 c(\alpha,\beta)=(-1)^{\langle\alpha,\beta\rangle}
\end{equation*}
for $\alpha,\beta\in L$. (Note that $L\subseteq L^\circ$ since $L$ is even, and 
thus integral.)

We define an action of $\mathfrak{h}=\mathfrak{h}\otimes t^0$ on 
$\mathbb{C}\lbrace L^\circ\rbrace$ by
\begin{equation*}
 h(0) e_\beta =\langle h,\beta\rangle e_\beta
\end{equation*}
for $h\in\mathfrak{h}$, $\beta\in L^\circ$. Also for $\beta\in L^\circ$ and $x$ 
a formal variable, we define a map $x^\beta$ on $\mathbb{C}\lbrace 
L^\circ\rbrace$ by
\begin{equation*}
 x^\beta e_\gamma = e_\gamma x^{\langle\beta,\gamma\rangle}
\end{equation*}
for any $\gamma\in L^\circ$.

For any subset $S\subseteq L^\circ$, we use $\mathbb{C}\lbrace S\rbrace$ to 
denote the subspace of $\mathbb{C}\lbrace L^\circ\rbrace$ spanned by the 
vectors 
$e_\beta$ for $\beta\in S$. Then as a vector space, the vertex operator algebra 
associated to $L$ is 
\begin{equation*}
 V_L=M(1)\otimes\mathbb{C}\lbrace L\rbrace.
\end{equation*}
For $\alpha\in L$, we use the notation $\iota(e_\alpha)$ to denote the element 
$1\otimes e_\alpha\in V_L$. The vacuum of $V_L$ is $\mathbf{1}=\iota(e_0)$, and 
the vertex algebra structure is generated by the vectors $\iota(e_\alpha)$ for 
$\alpha\in L$, whose vertex operators are
\begin{equation}\label{lattops}
 Y(\iota(e_\alpha),x)=E^- (-\alpha,x)E^+ (-\alpha,x) e_\alpha x^{\alpha}
\end{equation}
where
\begin{equation*}
 E^\pm (\beta, x)=\mathrm{exp}\left( \sum_{n\in\pm\mathbb{Z}_+} 
\frac{\beta(n)}{n} x^{-n}\right) 
\end{equation*}
for any $\beta\in L^\circ$, and $e_\alpha$ denotes the left multiplication 
action on $\mathbb{C}\lbrace L\rbrace$.
\begin{rema}
 The vertex algebra $V_L$ is a vertex operator algebra in the sense of 
\cite{FLM} only when $L$ is positive definite. If $L$ is not positive definite, 
$V_L$ is a conformal vertex algebra in the sense of \cite{HLZ}: there are 
infinitely many weight spaces of negative conformal weight, and the weight 
spaces are infinite dimensional. However, all the results of this paper still 
hold in this generality.
\end{rema}

We now recall the irreducible modules for $V_L$. The space
\begin{equation*}
 V_{L^\circ}=M(1)\otimes\mathbb{C}\lbrace 
L^\circ\rbrace
\end{equation*}
is a $V_L$-module, with vertex operators for the $\iota(e_\alpha)$ acting on 
$V_{L^\circ}$ the same as in \eqref{lattops}. Moreover, the submodules
\begin{equation*}
 V_{\beta +L}=M(1)\otimes\mathbb{C}\lbrace 
\beta+L\rbrace ,
\end{equation*}
where $\beta$ runs over coset representatives of $L^\circ /L$, exhaust the 
irreducible $V_L$-modules up to equivalence.

We now recall the intertwining operators among $V_L$-modules from \cite{DL}. 
For 
any $\beta\in L^\circ$, we have an intertwining operator
\begin{equation*}
 \mathcal{Y}_\beta: V_{\beta+L}\otimes V_{L^\circ}\rightarrow 
V_{L^\circ}\lbrace 
x\rbrace,
\end{equation*}
where for any $\gamma\in\beta+L$,
\begin{equation}\label{latintwop}
 \mathcal{Y}_\beta(\iota(e_\gamma),x)=E^-(-\gamma,x)E^+(-\gamma,x) e_\gamma 
x^{\gamma} 
e^{\pi i\beta} c(\cdot,\beta).
\end{equation}
Here, the operator $e^{\pi i\beta}$ on $V_{L^\circ}$ is given by
\begin{equation*}
 e^{\pi i\beta}\cdot u\otimes\iota(e_\gamma)=e^{\pi 
i\langle\beta,\gamma\rangle} 
u\otimes\iota(e_\gamma)
\end{equation*}
for $u\in M(1)$, $\gamma\in L^\circ$, and the 
operator $c(\cdot,\beta)$ is defined by
\begin{equation*}
 c(\cdot,\beta)\cdot u\otimes\iota(e_\gamma)=c(\gamma,\beta) 
u\otimes\iota(e_\gamma).
\end{equation*}
Then for any $\gamma\in L^\circ$, $\mathcal{Y}_\beta\vert_{V_{\beta+L}\otimes 
V_{\gamma+L}}$ is an 
intertwining operator of type 
$\binom{V_{\beta+\gamma+L}}{V_{\beta+L}\,V_{\gamma+L}}$. From \cite{DL} we have:
\begin{propo}
  For any $\alpha,\beta,\gamma\in L^\circ$,
 \begin{equation*}
 V^\alpha_{\beta\,\gamma}=V^{V_{\alpha+L}}_{V_{\beta+L} 
V_{\gamma+L}}=\left\lbrace
\begin{array}{ccc}  
\mathbb{C}\mathcal{Y}_\beta\vert_{V_{\beta+L}\otimes V_{\gamma+L}} & 
\mathrm{if} 
& \alpha=\beta+\gamma\\ 
0 & \mathrm{if} & \alpha\neq\beta+\gamma\\ 
\end{array}.\right.
\end{equation*}
\end{propo}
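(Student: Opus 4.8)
The plan is to first establish the vanishing statement via charge conservation, and then, in the remaining case, to show that the space of intertwining operators is one-dimensional by proving that every such operator is a scalar multiple of $\mathcal{Y}_\beta\vert_{V_{\beta+L}\otimes V_{\gamma+L}}$, which is already known to be a nonzero intertwining operator of the stated type (its leading coefficient is $\iota(e_{\beta+\gamma})\neq 0$). For the vanishing, let $\mathcal{Y}\in V^{V_{\alpha+L}}_{V_{\beta+L} V_{\gamma+L}}$ and take $v=h(-1)\mathbf{1}$ for $h\in\mathfrak{h}$ in the Jacobi identity (\ref{intwopjac}). Using (\ref{liealgops}) and extracting an appropriate coefficient gives the commutator formula $[h(m),\mathcal{Y}(w_{(1)},x)]=\sum_{j\geq 0}\binom{m}{j}x^{m-j}\mathcal{Y}(h(j)w_{(1)},x)$. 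The case $m=0$ shows that $\mathcal{Y}$ adds $\mathfrak{h}$-charges, so that $\mathcal{Y}(w_{(1)},x)w_{(2)}$ has charge equal to the sum of the charges of $w_{(1)}$ and $w_{(2)}$. Since those charges lie in $\beta+L$ and $\gamma+L$ while $V_{\alpha+L}$ carries only charges in $\alpha+L$, a nonzero $\mathcal{Y}$ forces $\alpha+L=\beta+\gamma+L$, giving the second case; and as $V_{\alpha+L}$ depends only on the coset $\alpha+L$, I may assume $\alpha=\beta+\gamma$ henceforth.

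Next I would reduce the entire operator to the single series $F(x):=\mathcal{Y}(\iota(e_\beta),x)\iota(e_\gamma)\in V_{\alpha+L}\lbrace x\rbrace$. Because $V_L$ is generated by the $\iota(e_\mu)$ for $\mu\in L$, the module $V_{\beta+L}$ is generated as a $V_L$-module by $\iota(e_\beta)$, and $V_{\gamma+L}$ by $\iota(e_\gamma)$. The iterate formula (\ref{assocint}) then expresses $\mathcal{Y}(u_n\iota(e_\beta),x)$, for $u\in V_L$, in terms of $Y_{V_{\alpha+L}}(u,x_1)\mathcal{Y}(\iota(e_\beta),x)$ and $\mathcal{Y}(\iota(e_\beta),x)Y_{V_{\gamma+L}}(u,x_1)$, so that $\mathcal{Y}(\cdot,x)$ as an operator is determined by $\mathcal{Y}(\iota(e_\beta),x)$; and the commutator formula obtained from (\ref{intwopjac}) lets me move $Y_{V_{\gamma+L}}(u,x_1)$ across $\mathcal{Y}(\iota(e_\beta),x)$, so that the operator $\mathcal{Y}(\iota(e_\beta),x)$ on all of $V_{\gamma+L}$ is determined by its value $F(x)$ on the generator $\iota(e_\gamma)$. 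Hence $\mathcal{Y}$ is completely determined by $F(x)$, and it suffices to show that $F(x)$ is unique up to scalar.

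Finally I would pin down $F(x)$ using the free-boson structure of $V_{\alpha+L}=M(1)\otimes\mathbb{C}\lbrace\alpha+L\rbrace$. By charge conservation $F(x)$ lies in the single Fock block $M(1)\otimes\iota(e_{\beta+\gamma})$, and since $h(n)\iota(e_\gamma)=0$ for $n>0$, the commutator formula (only the $j=0$ term surviving) gives $h(n)F(x)=\langle h,\beta\rangle x^n F(x)$ for all $h\in\mathfrak{h}$ and $n>0$. A direct computation of $[h(n),E^-(-\beta,x)]$ then shows that $E^-(-\beta,x)^{-1}F(x)$ is annihilated by every $h(n)$ with $n>0$, hence equals a scalar series in $x$ times the lowest-weight vector $\iota(e_{\beta+\gamma})$; the conformal-weight homogeneity (\ref{intwopdeg}) forces this scalar to be the single monomial $\lambda\,x^{\langle\beta,\gamma\rangle}$, so $F(x)=\lambda\,x^{\langle\beta,\gamma\rangle}E^-(-\beta,x)\iota(e_{\beta+\gamma})$ for some $\lambda\in\mathbb{C}$. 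Comparing with (\ref{latintwop}), in which $E^+(-\beta,x)\iota(e_\gamma)=\iota(e_\gamma)$ and $e_\beta x^\beta e^{\pi i\beta}c(\cdot,\beta)$ acts on $\iota(e_\gamma)$ by the scalar $\varepsilon(\beta,\gamma)\,e^{\pi i\langle\beta,\gamma\rangle}\,c(\gamma,\beta)\,x^{\langle\beta,\gamma\rangle}$, shows that $\mathcal{Y}_\beta(\iota(e_\beta),x)\iota(e_\gamma)$ has exactly the same shape, so $F(x)$ is a scalar multiple of it and therefore $\mathcal{Y}=\lambda'\,\mathcal{Y}_\beta\vert_{V_{\beta+L}\otimes V_{\gamma+L}}$. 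The main obstacle is precisely this last step: justifying rigorously that the annihilation constraint together with weight homogeneity determines $F(x)$ up to one constant, and verifying that the reconstruction of $\mathcal{Y}$ from $F(x)$ in the previous paragraph automatically reproduces the cocycle and sign factors $c(\cdot,\beta)$ and $e^{\pi i\beta}$ of $\mathcal{Y}_\beta$ — these are pinned down not by the Heisenberg commutators alone but by the full Jacobi identity relating $\mathcal{Y}$ to the genuine $V_L$-action $Y(\iota(e_\mu),x)$ for $\mu\in L$.
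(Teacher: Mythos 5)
The paper does not actually prove this proposition; it is quoted from \cite{DL}, so there is no internal proof to compare against. Your argument is the standard one for lattice fusion rules and is sound in outline: charge conservation under $h(0)$ gives the vanishing when $\alpha+L\neq\beta+\gamma+L$; reduction to the generators $\iota(e_\beta)$, $\iota(e_\gamma)$ shows $\mathcal{Y}$ is determined by $F(x)=\mathcal{Y}(\iota(e_\beta),x)\iota(e_\gamma)$; and the Heisenberg relations $h(n)F(x)=\langle h,\beta\rangle x^nF(x)$ for $n>0$ together with the weight formula \eqref{intwopdeg} pin $F(x)$ down to $\lambda\,x^{\langle\beta,\gamma\rangle}E^-(-\beta,x)\iota(e_{\beta+\gamma})$, which matches $\mathcal{Y}_\beta(\iota(e_\beta),x)\iota(e_\gamma)$ up to scalar. (You do, of course, take the existence half --- that $\mathcal{Y}_\beta$ satisfies the Jacobi identity --- as given, which is consistent with how the paper sets it up.)

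Two points need tightening. First, in the reduction step you say the commutator formula ``lets me move $Y_{V_{\gamma+L}}(u,x_1)$ across $\mathcal{Y}(\iota(e_\beta),x)$''; but the commutator formula produces correction terms $\mathcal{Y}(u_j\iota(e_\beta),x)$ with $j\geq 0$, and since $\iota(e_\beta)$ is not annihilated by all nonnegative modes of the $\iota(e_\mu)$, this creates a circularity with the iterate-formula step. The clean fix is exactly Proposition \ref{commform} of this paper (weak commutativity), which writes $(w_{(1)})_pv_qw_{(2)}$ as a linear combination of $v_r(w_{(1)})_sw_{(2)}$ with no new first arguments; this is the same subtlety the paper flags in the remark after Theorem \ref{intwopgen}. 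Second, the ``main obstacle'' you name at the end is not actually an obstacle: you never need to reconstruct $\mathcal{Y}$ from $F(x)$ and check that the cocycle and sign factors come out right. Once you know that an intertwining operator of this type is determined by its value on the pair of generators, and that $\mathcal{Y}$ and $\lambda'\mathcal{Y}_\beta\vert_{V_{\beta+L}\otimes V_{\gamma+L}}$ agree there, their difference is an intertwining operator vanishing on generators and hence zero. The cocycle data is carried by $\mathcal{Y}_\beta$ itself, whose existence you have already assumed.
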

\begin{rema}
 Note that since for any $\beta\in L^\circ$, 
$\mathcal{Y}_\beta\vert_{V_{\beta+L}\otimes V_{-\beta+L}}$ is a non-zero 
intertwining operator of type $\binom{V_L}{V_{\beta+L}\,V_{-\beta+L}}$, 
Proposition \ref{contpropo} implies that $V_{\beta+L}'\cong V_{-\beta+L}$.
\end{rema}


\section{Integral intertwining operators among modules for lattice vertex 
operator algebras}\label{sec:lattintwops}

We continue to fix a nondegenerate even lattice $L$ in this section. We will 
classify the integral intertwining operators among $V_L$-modules, using the 
integral forms for $V_L$-modules that were introduced in \cite{M2}. (The 
integral form of $V_L$ itself was first introduced in \cite{B}.) We recall 
these 
integral forms by stating the following theorem from \cite{M2} (the algebra 
part 
was also proved in \cite{P} and \cite{DG}):
\begin{theo}\label{latttheo}
 The vertex subring $V_{L,\Z}$ of $V_L$ generated by the vectors 
$\iota(e_\alpha)$ for $\alpha\in L$ is an integral form of $V_L$.  Moreover, 
for 
any $\beta\in L^\circ$, the $V_{L,\Z}$-submodule $V_{\beta+L,\Z}$ of 
$V_{\beta+L}$ generated by $\iota(e_\beta)$ is an integral form of 
$V_{\beta+L}$.
\end{theo}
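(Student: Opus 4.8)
The plan is to exhibit an explicit $\Z$-basis of $V_{L,\Z}$ and show that it is simultaneously a $\mathbb{C}$-basis of $V_L$; compatibility with the conformal weight grading will then be automatic because the basis elements are homogeneous. Since $V_{L,\Z}$ is \emph{by definition} the vertex subring generated by the $\iota(e_\alpha)$, it already contains the vacuum $\mathbf{1}=\iota(e_0)$ and is closed under all vertex-algebra products, so the genuine content of the statement is that this subring is a full-rank $\Z$-lattice in $V_L$ that respects the grading.

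First I would read off the generators' vertex operators from \eqref{lattops}. Acting on $\iota(e_\gamma)$, the factors $E^+(-\alpha,x)$ and $x^\alpha$ contribute only scalars, $e_\alpha$ contributes $\varepsilon(\alpha,\gamma)$, and only $E^-(-\alpha,x)=\exp(\sum_{m>0}\frac{\alpha(-m)}{m}x^m)$ creates new states; because $L$ is even, $\varepsilon(\alpha,\gamma)\in\{\pm1\}$ and $\langle\alpha,\gamma\rangle\in\Z$. Hence the coefficients of $Y(\iota(e_\alpha),x)\iota(e_\gamma)$ are, up to sign, the homogeneous components $s_n(\alpha)$ of $E^-(-\alpha,x)$ applied to $\iota(e_{\alpha+\gamma})$ --- the ``Schur polynomials'' in the Heisenberg creation modes --- so $V_{L,\Z}$ contains every $s_n(\alpha)\cdot\iota(e_\beta)$. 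The candidate integral basis is then $\{\prod_i s_{\lambda^{(i)}}(\gamma_i)\otimes e_\beta\}$, where $\gamma_1,\dots,\gamma_d$ is a $\Z$-basis of $L$, each $\lambda^{(i)}$ is a partition, $s_\lambda(\gamma)=\prod_j s_{\lambda_j}(\gamma)$, and $\beta\in L$.

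I would prove the two inclusions separately. For (candidate lattice) $\subseteq V_{L,\Z}$, I would induct on conformal weight, building up the multidirectional Schur monomials from repeated products of $Y(\iota(e_{\pm\gamma_i}),x)$; the normal-ordering contractions $E^+E^-$ contribute only strictly lower-weight corrections with integer coefficients, so the induction closes. For $V_{L,\Z}\subseteq$ (candidate lattice), I would verify stability under every mode $(\iota(e_\alpha))_n$: the contraction factor $(x_1-x_2)^{\langle\alpha,\beta\rangle}$ expands with integer binomial coefficients since $\langle\alpha,\beta\rangle\in\Z$, the cocycle is $\{\pm1\}$-valued on $L\times L$, and the coefficients of $E^\pm$ are integral polynomials in the modes, so no denominators appear. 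That the candidate monomials are simultaneously $\Z$- and $\mathbb{C}$-independent is the classical fact that the complete-homogeneous symmetric functions form a $\Z$-basis (and a $\mathbb{C}$-basis) of the ring of symmetric functions in each Heisenberg direction, under the identification of $\gamma(-m)$ with the power sum $p_m$; this gives full rank in every weight space and $\mathbb{C}$-spanning of $V_L$. Since each $s_n(\gamma)$ has weight $n$ and $\iota(e_\beta)$ has weight $\tfrac12\langle\beta,\beta\rangle$, grading-compatibility is automatic. The module assertion is proved identically: $V_{\beta+L,\Z}=V_{L,\Z}\cdot\iota(e_\beta)$ is spanned by the same Schur monomials tensored with $e_{\beta+\gamma}$ for $\gamma\in L$, and the same symmetric-function argument exhibits a full-rank graded lattice; here one uses that $\langle\alpha,\beta\rangle\in\Z$ for $\alpha\in L$ and $\beta\in L^\circ$ by definition of the dual lattice, so that vertex operators from $V_{L,\Z}$ preserve integrality.

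The main obstacle is the closure/integrality step: proving the Schur-monomial lattice is stable under all vertex products with \emph{integer} structure constants. Everything hinges on the interplay of three facts --- $\langle\cdot,\cdot\rangle$ being integer-valued (on $L$, and on $L\times L^\circ$ for the module case), the commutator cocycle $c$ taking values in $\{\pm1\}$ on the relevant pairs, and the integral-polynomial expansion of the exponentials $E^\pm$ --- and verifying that their combination never introduces a denominator is the delicate heart of the argument.
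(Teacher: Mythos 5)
Your overall strategy is the standard one and matches what the paper relies on: the paper does not actually prove Theorem \ref{latttheo} here (it is quoted from \cite{M2}, with the algebra part also in \cite{P} and \cite{DG}), but Remark \ref{intformdes} and the basis \eqref{modbasis} used in the proof of Proposition \ref{lattcontmod} confirm that the integral form is exactly the $\Z$-span of coefficients of products of $E^-$'s applied to $\iota(e_\alpha e_\beta)$, equivalently the lattice of complete-homogeneous (``Schur'') monomials $h_{\Lambda_1}(\gamma_1)\cdots h_{\Lambda_d}(\gamma_d)\iota(e_\alpha e_\beta)$ over a basis of $L$. Your two inclusions, the triangularity/induction on weight for one direction, and the symmetric-function identification of $\gamma(-m)$ with power sums for linear independence are all the right ingredients.

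There is one point where your justification is wrong as written and which the paper explicitly flags as a genuine hypothesis: you assert that ``because $L$ is even, $\varepsilon(\alpha,\gamma)\in\{\pm1\}$.'' Evenness (integrality) of $L$ gives you $\langle\alpha,\gamma\rangle\in\Z$ and hence that the \emph{commutator} map $c(\alpha,\gamma)=(-1)^{\langle\alpha,\gamma\rangle}$ is $\pm1$, but it says nothing about the individual cocycle values $\varepsilon(\alpha,\gamma)$, which a priori are only roots of unity. One must \emph{choose} the section so that $\varepsilon$ is $\pm1$-valued on $L\times L$ (Remark \ref{inteps}); without this the subring generated by the $\iota(e_\alpha)$ need not be an integral form at all. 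The issue is sharper in the module case: your candidate basis uses $e_{\beta+\gamma}$ directly, which implicitly requires $\varepsilon(\alpha,\gamma')=\pm1$ for $\alpha\in L$ and $\gamma'\in\beta+L\subseteq L^\circ$. That stronger normalization can be arranged (Remark \ref{inteps}), but it is not automatic, and the paper points out that the published proof in \cite{M2} itself mis-stated exactly this point; the safe alternative is to keep the basis vectors in the unexpanded form $\iota(e_\alpha e_\beta)$ as in Remark \ref{intformdes}, which only needs $\varepsilon$ to be $\pm1$-valued on $L\times L$. So: fix the choice of $\varepsilon$ as an explicit hypothesis before starting, and either adopt the stronger normalization or rewrite your module basis accordingly; with that repair the argument goes through.
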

\begin{rema}\label{inteps}
 For this theorem to hold, we must assume that $\varepsilon$ satisfies 
$\varepsilon(\alpha,\beta)=\pm 1$ for any $\alpha,\beta\in L$. In fact the proof (although not the statement) of Lemma 4.1 in \cite{M2} shows that we can choose $\varepsilon$ to satisfy the stronger condition $\varepsilon(\alpha,\beta)=\pm 1$ whenever either $\alpha\in L$ or $\beta\in L$. 
\end{rema}
\begin{rema}\label{intformdes}
 The proof of Theorem \ref{latttheo} that is given in \cite{M2}, using the weaker condition in Remark \ref{inteps}, shows that for $\beta\in L^\circ$, $V_{\beta+L,\Z}$ is the $\Z$-span of coefficients of products of the form
 \begin{equation*}
  E^-(-\alpha_1,x_1)\cdots E^-(-\alpha_k,x_k)\iota(e_\alpha e_\beta)
 \end{equation*}
where $\alpha_1,\ldots,\alpha_k,\alpha\in L$. If we assume the stronger condition in Remark \ref{inteps}, then for any $\beta\in L^\circ$, $V_{\beta+L,\Z}$ is the $\Z$-span of coefficients of products of the form
 \begin{equation*}
  E^-(-\alpha_1,x_1)\cdots E^-(-\alpha_k,x_k)\iota(e_\gamma)
 \end{equation*}
where $\alpha_1,\ldots,\alpha_k\in L$ and $\gamma\in\beta+L$. Note that the proof of Theorem 7.1 which is given in \cite{M2} incorrectly states that the second description of $V_{\beta+L,\Z}$ follows from the weaker condition in Remark \ref{inteps}. In the rest of this section, we will generally use the first description of $V_{\beta+L,\Z}$ since some of the formulas we need are slightly simpler this way.
\end{rema}
\begin{rema}
 The integral form $V_{\beta+L,\Z}$ depends on the choice of $\varepsilon$, but once an $\varepsilon$ satisfying the stronger condition of Remark \ref{inteps} is fixed, the second description of $V_{\beta+L,\Z}$ in Remark \ref{intformdes} shows that it does not depend on the choice of generator. That is, the $V_{L,\Z}$-module in $V_{\beta+L}$ generated by $\iota(e_\beta)$ is the same as the $V_{L,\Z}$-module generated by $\iota(e_{\beta'})$ for any $\beta'\in\beta+L$.
\end{rema}

In order to obtain integral intertwining operators, we will need an explicit 
description, as in Remark \ref{intformdes}, of the graded $\mathbb{Z}$-dual of 
the integral form of a $V_L$-module. Recalling that the contragredient of 
$V_{\beta+L}$ is
$V_{-\beta+L}$, we shall use an invariant bilinear pairing as in 
(\ref{contform}) to identify $V_{-\beta+L,\mathbb{Z}}'$ as a sublattice of 
$V_{\beta+L,\mathbb{Z}}$. First we 
calculate (\ref{contform}) with $\mathcal{Y}=\mathcal{Y}_\beta$ and with the 
form $(\cdot,\cdot)_{V_L}$ on $V_L$ normalized so that 
$(\mathbf{1},\mathbf{1})_{V_L}=1$. Thus for $u\in V_{\beta+L}$ and $v\in 
V_{-\beta+L}$, we have
\begin{equation*}
 (u,v)=\mathrm{Res}_x (\mathbf{1},(\mathcal{Y}_\beta)^o_0(u,e^{\pi i} x) e^{x 
L(1)}v)_{V_L}.
\end{equation*}
In particular, (\ref{latintwop}) implies that for $\gamma\in\beta+L$, 
$\gamma'\in -\beta+L$,
\begin{align}\label{formform}
  (\iota & (e_\gamma),\iota(e_{\gamma'}))  \nonumber\\
& =\mathrm{Res}_x\, x^{-1} (\mathbf{1}, e^{-\pi i\langle\gamma,\gamma\rangle/2}  
x^{-\langle\gamma,\gamma\rangle} E^-(-\gamma,-x^{-1})e_\gamma (e^{\pi i} 
x)^{-\langle\gamma,\gamma'\rangle} e^{\pi i\langle\beta,\gamma'\rangle} 
c(\gamma',\beta)\iota(e_{\gamma'}))_{V_L}\nonumber\\
 & =\mathrm{Res}_x\, x^{-1} (\mathbf{1}, e^{\pi 
i(\langle\beta-\gamma,\gamma'\rangle-\langle\gamma,\gamma\rangle/2)} 
x^{-\langle\gamma,\gamma+\gamma'\rangle} E^-(-\gamma,-x^{-1}) 
c(\gamma',\beta)\varepsilon(\gamma,\gamma')\iota(e_{\gamma+\gamma'}))_{V_L}
\nonumber\\
 & =e^{\pi i\langle\gamma-2\beta,\gamma\rangle/2} 
c(\gamma,\beta)^{-1}\varepsilon(\gamma,\gamma)^{-1}\delta_{\gamma+\gamma',0}.
\end{align}
Using (\ref{formform}), we see that for any $\alpha\in L$,
\begin{align*}
 (\iota(e_\alpha e_\beta),\iota(e_{-\alpha} e_{-\beta})) & 
=\varepsilon(\alpha,\beta)\varepsilon(-\alpha,-\beta)(\iota(e_{\alpha+\beta}),
\iota(e_{-\alpha-\beta}))\\
 & =\varepsilon(\alpha,\beta)^2 e^{\pi 
i\langle\alpha-\beta,\alpha+\beta\rangle/2} 
c(\alpha+\beta,\beta)^{-1}\varepsilon(\alpha+\beta,\alpha+\beta)^{-1}\nonumber\\
 & =e^{\pi i\langle\alpha,\alpha\rangle/2}\varepsilon(\alpha,\alpha)^{-1} 
e^{-\pi i\langle\beta,\beta\rangle/2} 
c(\beta,\beta)^{-1}\varepsilon(\beta,\beta)^{-1},
\end{align*}
since $c(\alpha,\beta)=\varepsilon(\alpha,\beta)\varepsilon(\beta,\alpha)^{-1}$ 
from (\ref{commcocycle}).

If we now renormalize the invariant pairing $(\cdot,\cdot)$ between 
$V_{\beta+L}$ and $V_{-\beta+L}$ by setting
\begin{equation*}
 (\cdot,\cdot)_{\mathrm{new}}=e^{\pi i\langle\beta,\beta\rangle/2} 
c(\beta,\beta)\varepsilon(\beta,\beta) (\cdot,\cdot)_{\mathrm{old}},
\end{equation*}
we see that now
\begin{equation*}
 (\iota(e_\alpha e_\beta),\iota(e_{\alpha'} 
e_{-\beta}))=(-1)^{\langle\alpha,\alpha\rangle/2}\varepsilon(\alpha,\alpha)^{-1}
\delta_{\alpha+\alpha',0}=0\;\mathrm{or}\;\pm 1
\end{equation*}
for any $\alpha,\alpha'\in L$, using Remark \ref{inteps}. We can use this new 
invariant bilinear pairing 
between $V_{\beta+L}$ and $V_{-\beta+L}$, which is the form in (\ref{contform}) 
with
\begin{equation*}
 \mathcal{Y}=e^{\pi 
i\langle\beta,\beta\rangle/2}c(\beta,\beta)\varepsilon(\beta,\beta)\mathcal{Y}
_\beta\vert_{V_{\beta+L}\otimes V_{-\beta+L}},
\end{equation*}
to identify $V_{-\beta+L,\mathbb{Z}}'$ as a sublattice of 
$V_{\beta+L,\mathbb{Z}}$, for any $\beta\in L^\circ$:
\begin{propo}\label{lattcontmod}
 For any $\beta\in L^\circ$, the integral form $V_{-\beta+L,\mathbb{Z}}'$ is 
the 
integral form of $V_{\beta+L}$ integrally spanned by coefficients of products 
of 
the form
 \begin{equation}\label{dualspan}
  E^-(-\beta_1,x_1)\cdots E^-(-\beta_k,x_k)\iota(e_\alpha e_\beta)
 \end{equation}
where $\beta_i\in L^\circ$ and $\alpha\in L$.
\end{propo}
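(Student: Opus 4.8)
The plan is to use the renormalized invariant pairing $(\cdot,\cdot)$ between $V_{\beta+L}$ and $V_{-\beta+L}$ constructed just above the statement—for which $(\iota(e_\alpha e_\beta),\iota(e_{\alpha'}e_{-\beta}))\in\lbrace 0,\pm 1\rbrace$ whenever $\alpha,\alpha'\in L$—to realize the graded $\mathbb{Z}$-dual concretely as the sublattice $\lbrace u\in V_{\beta+L} : (u,v)\in\mathbb{Z}\text{ for all }v\in V_{-\beta+L,\mathbb{Z}}\rbrace$. Writing $D$ for the integral span of the coefficients in \eqref{dualspan}, I must show $D=V_{-\beta+L,\mathbb{Z}}'$. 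Since the pairing block-decomposes over the group algebra (an $\iota(e_\gamma)$-component pairs only with the $\iota(e_{-\gamma})$-component) and, by Remark \ref{intformdes}, the Heisenberg content of $V_{-\beta+L,\mathbb{Z}}$ over each $\iota(e_\alpha e_{-\beta})$ is the lattice $\mathcal{H}_L$ spanned by coefficients of products $E^-(-\alpha_1,x_1)\cdots E^-(-\alpha_k,x_k)\mathbf{1}$ with $\alpha_i\in L$, the whole claim reduces to one statement about the Heisenberg vertex operator algebra $M(1)$: under its contravariant form, $\mathcal{H}_L$ is dual to the analogous lattice $\mathcal{H}_{L^\circ}$ built from vectors of $L^\circ$.

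The integrality inclusion $D\subseteq V_{-\beta+L,\mathbb{Z}}'$ I would prove by a direct generating-function computation. Invariance of the pairing together with \eqref{oppopdef} and the lattice analogue of \eqref{liealgops}, namely $Y(h(-1)\mathbf{1},x)=\sum_n h(n)x^{-n-1}$ for $h\in\mathfrak{h}$, yields the adjoint relation $(h(n)u,v)=(u,-h(-n)v)$, whence $E^-(-\gamma,x)^\ast=E^+(-\gamma,x^{-1})$ on these modules. Pairing a generator of $D$, a coefficient of $E^-(-\beta_1,y_1)\cdots E^-(-\beta_j,y_j)\iota(e_{\alpha'}e_\beta)$ with $\beta_i\in L^\circ$, against a generator of $V_{-\beta+L,\mathbb{Z}}$, a coefficient of $E^-(-\alpha_1,x_1)\cdots E^-(-\alpha_k,x_k)\iota(e_\alpha e_{-\beta})$ with $\alpha_l\in L$, I move the bra operators across as $E^+$'s and commute them past the ket's $E^-$'s, producing the scalar
\[
 \prod_{i,l}(1-x_l y_i)^{\langle\beta_i,\alpha_l\rangle}.
\]
The $E^+$'s then annihilate the Fock vacuum $\iota(e_\alpha e_{-\beta})$, so only the Heisenberg-degree-zero part survives and one is left with the ground-state pairing $(\iota(e_{\alpha'}e_\beta),\iota(e_\alpha e_{-\beta}))\in\lbrace 0,\pm 1\rbrace$. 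Because $\beta_i\in L^\circ$ and $\alpha_l\in L$, each exponent $\langle\beta_i,\alpha_l\rangle$ is an integer, so every factor above—and hence the whole series—has integer coefficients; thus every coefficient of the pairing lies in $\mathbb{Z}$.

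The reverse inclusion $V_{-\beta+L,\mathbb{Z}}'\subseteq D$ is the main obstacle. By the block decomposition it amounts to $\mathcal{H}_L'\subseteq\mathcal{H}_{L^\circ}$, that is, to upgrading the integer-valued pairing between $\mathcal{H}_L$ and $\mathcal{H}_{L^\circ}$ to a \emph{perfect} (unimodular) pairing on each conformal-weight space, so that no lattice strictly larger than $\mathcal{H}_{L^\circ}$ can pair integrally with $\mathcal{H}_L$. I would establish unimodularity by exhibiting mutually dual $\mathbb{Z}$-bases: fixing a $\mathbb{Z}$-basis $v_1,\dots,v_d$ of $L$ with dual basis $v_1^\ast,\dots,v_d^\ast$ of $L^\circ$, the coefficients of $\prod_l E^-(-v_{i_l},x_l)\mathbf{1}$ realize the multivariate complete-homogeneous-type symmetric functions in the modes, and the classical Hall duality between complete homogeneous and monomial symmetric functions—corrected by the perfect pairing $\langle v_i,v_j^\ast\rangle=\delta_{ij}$ and the cancellation of the mode factors $n$ against the $\tfrac1n$'s in the exponentials—makes the transition and Gram matrices unitriangular, forcing the Gram determinant to be $\pm 1$ on each graded piece. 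The rank-one case already displays the mechanism: $(E^-(-v^\ast,y)\mathbf{1},E^-(-v,x)\mathbf{1})=(1-xy)^{\langle v^\ast,v\rangle}$, and comparing weight-by-weight bases gives Gram determinant $\pm 1$. Granting this unimodularity, any $u\in V_{-\beta+L,\mathbb{Z}}'$ has each group-graded Heisenberg component in $\mathcal{H}_{L^\circ}$, hence is a $\mathbb{Z}$-combination of the coefficients in \eqref{dualspan}, so $u\in D$ and the two lattices coincide. The technical heart, and the step I expect to require the most care, is precisely this multivariable dual-basis bookkeeping establishing the Gram determinant is a unit.
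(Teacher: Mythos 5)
Your proposal follows the same route as the paper's proof in all essentials. The easy inclusion $D\subseteq V_{-\beta+L,\Z}'$ is verified by exactly the computation in \eqref{formcalc}: move the $E^-$'s across as $E^+$'s, commute past, and read off $\prod_{i,j}(1-x_iy_j)^{\langle\beta_i,\alpha_j\rangle}$ times the unit ground-state pairing. For the hard inclusion, both arguments reduce, after the block decomposition over the group algebra and the choice of a basis $\lbrace\alpha^{(i)}\rbrace$ of $L$ with dual basis $\lbrace\beta^{(i)}\rbrace$ of $L^\circ$, to the Cauchy kernel $\prod_{i,j}(1-x_iy_j)^{-1}$ and the duality between complete homogeneous and monomial symmetric functions; the paper packages this as an explicit computation of the basis of $V_{-\beta+L,\Z}'$ dual to \eqref{modbasis} (the dual vectors are monomial-symmetric operators $m_\Lambda(\alpha^{(i)})$, placed inside the span of \eqref{dualspan} via the integral expansion \eqref{handm}), while you package it as unimodularity of a Gram matrix. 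These are equivalent. Note that both versions depend on knowing that \eqref{modbasis} is an actual $\Z$-basis of $V_{-\beta+L,\Z}$, not merely a spanning set; the paper cites \cite{M1} and \cite{DG} for this, and your argument needs the same input.

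One step of your justification is wrong as stated and should be repaired: the relevant transition and Gram matrices are \emph{not} unitriangular. Already in weight $2$ in the rank-one case, $h_2=m_2+m_{11}$ and $h_{11}=m_2+2m_{11}$, so the Gram matrix of your two $h$-type families is $\left(\begin{smallmatrix}1&1\\1&2\end{smallmatrix}\right)$, with no zero entries; in general the $h$-to-$m$ transition matrix $\bigl(\langle h_\Lambda,h_M\rangle\bigr)$ has all entries positive. Its determinant is nonetheless $\pm 1$, but for a different reason: the Gram matrix of the $h$-families is precisely the matrix expressing the $h_\Lambda$ in terms of the $m_M$, and since $\lbrace h_\Lambda\rbrace$ and $\lbrace m_\Lambda\rbrace$ are both $\Z$-bases of the ring of symmetric functions, that matrix is invertible over $\Z$. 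Equivalently, Hall duality $\langle h_\Lambda,m_M\rangle=\delta_{\Lambda M}$ together with the integrality of the transition matrices in both directions gives unimodularity directly --- which is exactly the content of \eqref{handm} that the paper invokes. With that substitution your argument closes and coincides in substance with the paper's.
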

\begin{proof}
 Let $\widetilde{V}_{\beta+L,\Z}$ be the integral form of $V_{\beta+L}$ 
integrally spanned by coefficients of products as in \eqref{dualspan}. We first 
show that $\widetilde{V}_{\beta+L,\Z}\subseteq V_{-\beta+L,\Z}'$. For $\beta\in 
L^\circ$ and $n\in\Z$, it is easy to see from \eqref{oppopdef} that the adjoint 
of the operator $\beta(n)$ is $-\beta(-n)$. Thus the adjoint of
 \begin{equation*}
  E^-(-\beta,x)=\mathrm{exp}\left(\sum_{n<0}\dfrac{-\beta(n)}{n} x^{-n}\right)
 \end{equation*}
is
 \begin{equation*}
  \mathrm{exp}\left(\sum_{n<0}\dfrac{\beta(-n)}{n} 
x^{-n}\right)=\mathrm{exp}\left(\sum_{n>0}\dfrac{-\beta(n)}{n} 
x^n\right)=E^+(-\beta, x^{-1}).
 \end{equation*}
From Proposition 4.3.1 in \cite{FLM}, we have
\begin{equation*}
 E^+(\beta,x_1) E^-(\gamma,x_2) = E^-(\gamma,x_2) 
E^+(\beta,x_1)\left(1-\dfrac{x_2}{x_1}\right)^{\langle\beta,\gamma\rangle}
\end{equation*}
for any $\beta,\gamma\in L^\circ$. Thus for any $\beta_1,\ldots,\beta_k\in 
L^\circ$ and $\alpha_1,\ldots,\alpha_{k'},\alpha,\alpha'\in L$,
\begin{align}\label{formcalc}
 & \left( E^-(-\beta_1, x_1)\cdots E^-(-\beta_k, x_k)\iota(e_\alpha e_\beta), 
E^-(-\alpha_1, y_1)\cdots E^-(-\alpha_{k'}, y_{k'})\iota(e_{\alpha'} 
e_{-\beta})\right)\nonumber\\
 &\;\;\;\;\;\;\;\;  =\left(\iota(e_\alpha e_\beta), 
E^+(-\beta_1,x_1^{-1})\cdots E^+(-\beta_k,x_k^{-1}) E^-(-\alpha_1,y_1)\cdots 
E^-(-\alpha_{k'}, y_{k'}),\iota(e_{\alpha'} e_{-\beta})\right)\nonumber\\
 &\;\;\;\;\;\;\;\; =\left(\iota(e_\alpha e_\beta),E^-(-\alpha_1,y_1)\cdots 
E^-(-\alpha_{k'},y_{k'})\iota(e_{\alpha'} 
e_{-\beta})\right)\prod_{i=1}^k\prod_{j=1}^{k'} (1-x_i 
y_j)^{\langle\beta_i,\alpha_j\rangle}\nonumber\\
 &\;\;\;\;\;\;\;\; 
=(-1)^{\langle\alpha,\alpha\rangle/2}\varepsilon(\alpha,\alpha)^{-1}\delta_{
a+\alpha',0}\prod_{i=1}^k\prod_{j=1}^{k'} (1-x_i 
y_j)^{\langle\beta_i,\alpha_j\rangle}\in\Z[[\lbrace x_i\rbrace, \lbrace 
y_j\rbrace]]
\end{align}
since each $\langle\beta_i,\alpha_j\rangle\in\Z$. Since $V_{-\beta+L,\Z}$ is 
the integral span of coefficients of products of the form 
\begin{equation*}
  E^-(-\alpha_1, y_1)\cdots E^-(-\alpha_{k'}, y_{k'})\iota(e_{\alpha'} 
e_{-\beta})
\end{equation*}
for $\alpha_1,\ldots,\alpha_{k'},\alpha'\in L$ by Remark \ref{intformdes}, it 
follows that $\widetilde{V}_{\beta+L,\Z}\subseteq V_{-\beta+L,\Z}'$.

Now we must show the opposite inclusion
$V_{-\beta+L,\Z}'\subseteq\widetilde{V}_{\beta+L,\Z}$. For any $\beta\in L^\circ$ and 
$n>0$, let $s_{\beta,n}$ denote the coefficient of $x^n$ in $E^-(-\beta,x)$, 
and for any partition $\Lambda=(\lambda_1,\ldots,\lambda_k)$ where 
$\lambda_1\geq\ldots\geq\lambda_k>0$ we define
\begin{equation*}
 h_\Lambda(\beta)=s_{\beta,\lambda_1}\cdots s_{\beta,\lambda_k}.
\end{equation*}
If $\lbrace\alpha^{(1)},\ldots,\alpha^{(l)}\rbrace$ is a basis for $L$, then 
$V_{-\beta+L,\Z}$ has a basis consisting of the vectors
\begin{equation}\label{modbasis}
 h_{\Lambda_1}(\alpha^{(1)})\cdots h_{\Lambda_l}(\alpha^{(l)})\iota(e_{-\alpha} 
e_{-\beta})
\end{equation}
where $\alpha\in L$ and $\Lambda_1,\ldots,\Lambda_k$ run over all partitions 
(see \cite{M1} and \cite{DG}). 

Let $\lbrace\beta^{(1)},\ldots,\beta^{(l)}\rbrace$ be the basis of $L^\circ$ 
dual to $\lbrace\alpha^{(1)},\ldots,\alpha^{(l)}\rbrace$. Then in the basis of 
$V_{-\beta+L,\Z}'$ dual to \eqref{modbasis}, the basis vector corresponding to 
$h_\Lambda(\alpha^{(i)})\iota(e_{-\alpha} e_{-\beta})$ for any $i$, $\alpha\in 
L$, and partition $\Lambda$ is given by
\begin{equation*}
 (-1)^{\langle\alpha,\alpha\rangle/2}\varepsilon(\alpha,\alpha) 
m_\Lambda(\alpha^{(i)})\iota(e_{\alpha} e_\beta)
\end{equation*}
where $m_\Lambda(\alpha^{(i)})$ is a polynomial in the operators 
$\beta^{(i)}(-n)$, $n>0$. It follows that the basis vector dual to the vector 
\eqref{modbasis} is
\begin{equation*}
 (-1)^{\langle\alpha,\alpha\rangle/2}\varepsilon(\alpha,\alpha) 
m_{\Lambda_1}(\alpha^{(1)})\cdots m_{\Lambda_k}(\alpha^{(l)})\iota(e_\alpha 
e_\beta).
\end{equation*}
Thus it is enough to show that for any $i$ and partition $\Lambda$, 
$m_\Lambda(\alpha^{(i)})$ is an integer linear combination of operators 
$h_{\Lambda'}(-\beta^{(i)})$ for partitions $\Lambda'$.

We will need to use the well-known monomial and complete symmetric polynomials 
(see for example \cite{Mac} Chapter 1, Section 2). Fix a positive integer $n$. 
Then for any partition $\Lambda=(\lambda_1,\ldots,\lambda_k)$ where $k\leq n$, 
the monomial symmetric polynomial in $n$ variables $m_\Lambda(y_1,\ldots, y_n)$ 
is the sum of all distinct permutations of the monomial $y_1^{\lambda_1}\cdots 
y_k^{\lambda_k}$. The polynomials $m_\Lambda(y_1,\ldots,y_n)$ as $\Lambda$ runs 
over partitions with less than or equal to $n$ parts form a basis of the 
(graded) ring of $n$-variable symmetric polynomials with integer coefficients. 
On the other hand, for any $m>0$, the complete homogeneous symmetric polynomial 
$h_m(y_1,\ldots,y_n)$ is the sum of all distinct degree $m$ monomials in $n$ 
variables; for any partition $\Lambda=(\lambda_1,\ldots,\lambda_k)$, we define 
the polynomial
\begin{equation*}
 h_\Lambda(y_1,\ldots,y_n)=h_{\lambda_1}(y_1,\ldots,y_n)\cdots 
h_{\lambda_k}(y_1,\ldots,y_n).
\end{equation*}
The polynomials $h_\Lambda(y_1,\ldots, y_n)$ as $\Lambda$ runs over all 
partitions with parts less than or equal to $n$ forms another basis for the 
ring of $n$-variable symmetric polynomials. In particular, for any partition 
$\Lambda$ of $n$,
\begin{equation}\label{handm}
 m_\Lambda(y_1,\ldots,y_n)=\sum k_{\Lambda'} h_{\Lambda'}(y_1,\ldots,y_n)
\end{equation}
where $k_{\Lambda'}\in\Z$ and $\Lambda'$ runs over all partitions of $n$.

In light of \eqref{handm}, we will show that for any partition 
$\Lambda=(\lambda_1,\ldots,\lambda_k)$ of an integer $n$, any $i$, and 
$\alpha\in L$,
\begin{align}\label{mform}
 & \left(m_\Lambda(\alpha^{(i)})\iota(e_\alpha e_\beta), E^-(-\alpha^{(i)}, 
y_1)\cdots E^-(-\alpha^{(i)}, y_n)\iota(e_{-\alpha} 
e_{-\beta})\right)\nonumber\\ 
&\;\;\;\;\;\;\;\;\;\;\;\;\;\;\;\;=(-1)^{\langle\alpha,\alpha\rangle/2}
\varepsilon(\alpha,\alpha)^{-1} m_\Lambda(y_1,\ldots, y_n)
\end{align}
and
\begin{align}\label{hform}
& \left(h_\Lambda(-\beta^{(i)})\iota(e_\alpha e_\beta), E^-(-\alpha^{(i)}, 
y_1)\cdots E^-(-\alpha^{(i)}, y_n)\iota(e_{-\alpha} 
e_{-\beta})\right)\nonumber\\
&\;\;\;\;\;\;\;\;\;\;\;\;\;\;\;\;=(-1)^{\langle\alpha,\alpha\rangle/2}
\varepsilon(\alpha,\alpha)^{-1} h_\Lambda(y_1,\ldots, y_n).
\end{align}
Since $m_\Lambda(\alpha^{(i)})$ and $h_\Lambda(-\beta^{(i)})$ are operators of 
degree $n$ and are polynomials in the operators $\beta^{(i)}(-m)$ for $m>0$, 
$V_{-\beta+L,\Z}$ is spanned by coefficients of monomials in
\begin{equation}\label{alphaispan}
 E^-(-\alpha^{(i)}, y_1)\cdots E^-(-\alpha^{(i)}, y_n)\iota(e_{-\alpha} 
e_{-\beta})
\end{equation}
and by vectors $v$ such that
\begin{equation*}
 \left(m_\Lambda(\alpha^{(i)})\iota(e_\alpha 
e_\beta),v\right)=\left(h_\Lambda(-\beta^{(i)})\iota(e_\alpha e_\beta), 
v\right) =0.
\end{equation*}
Thus \eqref{handm}, \eqref{mform}, and \eqref{hform} imply that
\begin{equation*}
 m_\Lambda(\alpha^{(i)})\iota(e_\alpha e_\beta)=\sum k_{\Lambda'} 
h_{\Lambda'}(-\beta^{(i)})\iota(e_\alpha e_\beta),
\end{equation*}
and thus
\begin{equation*}
 m_{\Lambda}(\alpha^{(i)})=\sum k_{\Lambda'} h_{\Lambda'}(-\beta^{(i)}),
\end{equation*}
which will complete the proof.

To verify \eqref{mform}, we observe that by the definition of 
$m_\Lambda(\alpha^{(i)})$, a monomial on the left side of \eqref{mform} has 
non-zero coefficient (equal to 
$(-1)^{\langle\alpha,\alpha\rangle/2}\varepsilon(\alpha,\alpha)^{-1}$) if and 
only if the corresponding coefficient of \eqref{alphaispan} is 
$h_\Lambda(\alpha^{(i)})\iota(e_{-\alpha} e_{-\beta})$. Since the coefficient 
of a monomial in \eqref{alphaispan} equals 
$h_\Lambda(\alpha^{(i)})\iota(e_{-\alpha} e_{-\beta})$ if and only if the 
monomial is a permutation of $y_1^{\lambda_1}\cdots y_k^{\lambda_k}$, 
\eqref{mform} follows from the definition of $m_\Lambda(y_1,\ldots,y_n)$.

To verify \eqref{hform}, we see from \eqref{formcalc} that the left side of 
\eqref{hform} is the coefficient of $x_1^{\lambda_1}\cdots x_k^{\lambda_k}$ in
\begin{align*} 
&(-1)^{\langle\alpha,\alpha\rangle/2}\varepsilon(\alpha,\alpha)^{-1}\prod_{i=1}
^k\prod_{j=1}^n (1-x_i y_j)^{-1} 
=(-1)^{\langle\alpha,\alpha\rangle/2}\varepsilon(\alpha,\alpha)^{-1}\prod_{i=1}
^k\prod_{j=1}^n\sum_{m\geq 0} x_i^m y_j^m\nonumber\\
 &\;\;\;\;\;\;\;\; 
=(-1)^{\langle\alpha,\alpha\rangle/2}\varepsilon(\alpha,\alpha)^{-1}\prod_{i=1}
^k\sum_{m\geq 0}\left(\sum_{\substack{m_1,\ldots, m_n\geq 0\\ 
m_1+\cdots+m_n=m}} y_1^{m_1}\cdots y_n^{m_n}\right) x_i^m\nonumber\\
 &\;\;\;\;\;\;\;\; 
=(-1)^{\langle\alpha,\alpha\rangle/2}\varepsilon(\alpha,\alpha)^{-1}\prod_{i=1}
^k\sum_{m\geq 0} h_m(y_1,\ldots,y_n)x_i^m.
\end{align*}
Thus the coefficient of $x_1^{\lambda_1}\cdots x_k^{\lambda_k}$ is
\begin{equation*}
 (-1)^{\langle\alpha,\alpha\rangle/2}\varepsilon(\alpha,\alpha)^{-1} 
h_\Lambda(y_1,\ldots, y_n)
\end{equation*}
as desired.
\end{proof}

Note that since $L\subseteq L^\circ$, Proposition \ref{lattcontmod} shows that 
$V_{-\beta+L,\mathbb{Z}}'$ is generally larger than $V_{\beta+L,\mathbb{Z}}$. 
We record the case $\beta =0$ of Proposition \ref{lattcontmod}:
\begin{corol}\label{lattcontalg}
 The integral form $V_{L,\mathbb{Z}}'$ is the integral span of coefficients of 
products of the form
 \begin{equation*}
   E^-(-\beta_1,x_1)\cdots E^-(-\beta_k,x_k)\iota(e_{\alpha})
 \end{equation*}
where $\beta_i\in L^\circ$ and $\alpha\in L$.
\end{corol}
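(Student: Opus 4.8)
The plan is to obtain this corollary as the immediate specialization of Proposition \ref{lattcontmod} to the case $\beta = 0$. First I would observe that when $\beta = 0$ we have $-\beta + L = L$, so that $V_{-\beta+L,\mathbb{Z}}' = V_{L,\mathbb{Z}}'$ and $V_{\beta+L} = V_L$. Thus Proposition \ref{lattcontmod}, applied with $\beta = 0$, already identifies $V_{L,\mathbb{Z}}'$ as the integral span of coefficients of products of the form $E^-(-\beta_1,x_1)\cdots E^-(-\beta_k,x_k)\iota(e_\alpha e_0)$ with $\beta_i \in L^\circ$ and $\alpha \in L$. No new argument is required; the entire substance of the result is contained in that proposition.

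The only remaining point to check is that these products coincide with the ones in the statement of the corollary. Since $e_0$ is the identity of the twisted group algebra $\mathbb{C}\lbrace L^\circ\rbrace$ (equivalently $\varepsilon(\alpha,0) = 1$), we have $e_\alpha e_0 = e_\alpha$, so $\iota(e_\alpha e_0) = \iota(e_\alpha)$ and the two spanning sets agree verbatim. I do not anticipate any genuine obstacle here: the work lies entirely in Proposition \ref{lattcontmod}, and the only thing to verify is that its notation specializes as expected, which is routine. It is worth recording the case $\beta = 0$ separately only because it concerns the integral form of the vertex algebra $V_L$ itself, and because it shows concretely that $V_{L,\mathbb{Z}}'$ is in general strictly larger than $V_{L,\mathbb{Z}}$, the $\beta_i$ now being allowed to range over all of $L^\circ$ rather than just $L$.
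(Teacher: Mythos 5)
Your proposal is correct and matches the paper exactly: the paper states this corollary with no separate proof, introducing it simply as ``the case $\beta=0$ of Proposition \ref{lattcontmod},'' which is precisely your specialization. Your additional check that $\iota(e_\alpha e_0)=\iota(e_\alpha)$ because $e_0$ is the identity of the twisted group algebra is routine and correct.
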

\begin{rema}
 Corollary \ref{lattcontalg} provides an alternative to the description of $V_{L,\Z}'$ given in Proposition 3.6 of \cite{DG}.
\end{rema}

\begin{rema}
 Note that the precise identity of $V_{-\beta+L,\mathbb{Z}}'$ depends on the 
choice of normalization of the invariant bilinear pairing between $V_{\beta+L}$ 
and $V_{-\beta+L}$. We shall take $V_{-\beta+L,\mathbb{Z}}'$ as described in 
Proposition \ref{lattcontmod} as the official $V_{-\beta+L,\mathbb{Z}}'$, since 
we have $V_{\beta+L,\mathbb{Z}}\subseteq V_{-\beta+L,\mathbb{Z}}'$ this way.
\end{rema}

Finally we can classify some integral intertwining operators:
\begin{theo}\label{lattintwopth}
 For any $\beta,\gamma\in L^\circ$, there is a rank one lattice of intertwining 
operators within $V^{\beta+\gamma}_{\beta\,\gamma}$ integral with respect to 
$V_{\beta+L,\mathbb{Z}}$, $V_{\gamma+L,\mathbb{Z}}$, and 
$V_{-\beta-\gamma+L,\mathbb{Z}}'$. Moreover, this lattice is spanned by
 \begin{equation*}
  \mathcal{Y}_{\beta,\gamma,\mathbb{Z}}=e^{-\pi 
i\langle\beta,\gamma\rangle}\varepsilon(\gamma,\beta)^{-1}\mathcal{Y}
_\beta\vert_{V_{\beta+L}\otimes V_{\gamma+L}}.
 \end{equation*}
\end{theo}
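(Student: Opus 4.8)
The plan is to combine the general integrality criterion of Theorem \ref{intwopgen} with the explicit description of the dual integral form in Proposition \ref{lattcontmod}, reducing everything to a single computation, and then to pin down the rank-one lattice by inspecting one leading coefficient. First I would recall that by the fusion rules recalled in the previous section (following \cite{DL}), the space $V^{\beta+\gamma}_{\beta\,\gamma}$ is one-dimensional, spanned by $\mathcal{Y}_\beta\vert_{V_{\beta+L}\otimes V_{\gamma+L}}$, so every intertwining operator of this type is $c\,\mathcal{Y}_{\beta,\gamma,\mathbb{Z}}$ for some $c\in\mathbb{C}$, where $\mathcal{Y}_{\beta,\gamma,\mathbb{Z}}$ is the indicated nonzero rescaling. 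By Theorem \ref{latttheo}, $V_{\beta+L,\mathbb{Z}}$ and $V_{\gamma+L,\mathbb{Z}}$ are generated as $V_{L,\mathbb{Z}}$-modules by $\iota(e_\beta)$ and $\iota(e_\gamma)$ respectively, so Theorem \ref{intwopgen} reduces the integrality of $\mathcal{Y}_{\beta,\gamma,\mathbb{Z}}$ (with respect to $V_{\beta+L,\mathbb{Z}}$, $V_{\gamma+L,\mathbb{Z}}$, and $V_{-\beta-\gamma+L,\mathbb{Z}}'$) to checking the single expression $\mathcal{Y}_{\beta,\gamma,\mathbb{Z}}(\iota(e_\beta),x)\iota(e_\gamma)$.

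The central computation is to evaluate this expression using the formula \eqref{latintwop} for $\mathcal{Y}_\beta$ with first argument $\iota(e_\beta)$, applied to $\iota(e_\gamma)$. Acting from the right, the operators $c(\cdot,\beta)$, $e^{\pi i\beta}$, and $x^\beta$ produce the scalars $c(\gamma,\beta)$, $e^{\pi i\langle\beta,\gamma\rangle}$, and $x^{\langle\beta,\gamma\rangle}$; the group-algebra action $e_\beta$ contributes $\varepsilon(\beta,\gamma)$ and sends the group-algebra part to $\iota(e_{\beta+\gamma})$; and $E^+(-\beta,x)$ acts trivially on the lowest-weight vector $\iota(e_{\beta+\gamma})$, leaving $E^-(-\beta,x)$. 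Multiplying by the chosen normalization $e^{-\pi i\langle\beta,\gamma\rangle}\varepsilon(\gamma,\beta)^{-1}$ and using the cocycle identity \eqref{commcocycle} in the form $c(\gamma,\beta)=\varepsilon(\gamma,\beta)\varepsilon(\beta,\gamma)^{-1}$, all scalar factors collapse to $1$, so that
\begin{equation*}
 \mathcal{Y}_{\beta,\gamma,\mathbb{Z}}(\iota(e_\beta),x)\iota(e_\gamma)=x^{\langle\beta,\gamma\rangle}E^-(-\beta,x)\iota(e_{\beta+\gamma}).
\end{equation*}
Since $\beta\in L^\circ$, every coefficient of $E^-(-\beta,x)\iota(e_{\beta+\gamma})$ is of the form covered by Proposition \ref{lattcontmod} (take $k=1$, $\beta_1=\beta$, and $\alpha=0$), hence lies in $V_{-\beta-\gamma+L,\mathbb{Z}}'$; the overall factor $x^{\langle\beta,\gamma\rangle}$ only shifts powers of $x$. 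Thus the expression lies in $V_{-\beta-\gamma+L,\mathbb{Z}}'\{x\}$, and Theorem \ref{intwopgen} yields the integrality of $\mathcal{Y}_{\beta,\gamma,\mathbb{Z}}$.

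Finally, to see that the integral operators form exactly the rank-one lattice $\mathbb{Z}\,\mathcal{Y}_{\beta,\gamma,\mathbb{Z}}$, I would test integrality of a general $c\,\mathcal{Y}_{\beta,\gamma,\mathbb{Z}}$ on the same pair of generators: its value on $\iota(e_\beta)\otimes\iota(e_\gamma)$ has leading coefficient $c\,\iota(e_{\beta+\gamma})$, since the constant term of $E^-(-\beta,x)$ is the identity. As $\iota(e_{\beta+\gamma})$ is a primitive basis vector of $V_{-\beta-\gamma+L,\mathbb{Z}}'$ by the description in Proposition \ref{lattcontmod} (take $k=0$, $\alpha=0$), the requirement $c\,\iota(e_{\beta+\gamma})\in V_{-\beta-\gamma+L,\mathbb{Z}}'$ forces $c\in\mathbb{Z}$; combined with the integrality of $\mathcal{Y}_{\beta,\gamma,\mathbb{Z}}$ itself, this shows the integral lattice is precisely $\mathbb{Z}\,\mathcal{Y}_{\beta,\gamma,\mathbb{Z}}$. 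I expect the main obstacle to be the bookkeeping in the central computation, in particular verifying through \eqref{commcocycle} that the prescribed normalization is exactly the constant that cancels the cocycle and sign factors; the remaining steps are direct applications of the two cited results.
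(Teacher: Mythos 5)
Your proposal is correct and follows essentially the same route as the paper: compute $\mathcal{Y}_{\beta,\gamma,\mathbb{Z}}(\iota(e_\beta),x)\iota(e_\gamma)=x^{\langle\beta,\gamma\rangle}E^-(-\beta,x)\iota(e_{\beta+\gamma})$ via \eqref{latintwop} and \eqref{commcocycle}, invoke Theorem \ref{intwopgen} on the generators $\iota(e_\beta)$, $\iota(e_\gamma)$ together with Proposition \ref{lattcontmod}, and then use the one-dimensionality of $V^{\beta+\gamma}_{\beta\,\gamma}$ plus the primitivity of $\iota(e_{\beta+\gamma})$ in $V_{-\beta-\gamma+L,\mathbb{Z}}'$ to conclude that the integral operators are exactly $\mathbb{Z}\,\mathcal{Y}_{\beta,\gamma,\mathbb{Z}}$. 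The scalar bookkeeping ($c(\gamma,\beta)\varepsilon(\beta,\gamma)=\varepsilon(\gamma,\beta)$, cancelled by the prescribed normalization) checks out exactly as in the paper.
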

\begin{proof}
From the definition (\ref{latintwop}) of $\mathcal{Y}_\beta$, we have
\begin{align*}
\mathcal{Y}_\beta(\iota(e_\beta),x)\iota(e_\gamma) & 
=E^-(-\beta,x)E^+(-\beta,x) 
x^{\langle\beta,\gamma\rangle} e^{\pi 
i\langle\beta,\gamma\rangle}c(\gamma,\beta)\iota(e_\beta e_\gamma)\nonumber\\
& =x^{\langle\beta,\gamma\rangle} E^-(-\beta,x) e^{\pi 
i\langle\beta,\gamma\rangle}\varepsilon(\gamma,\beta)\varepsilon(\beta,\gamma)^{
-1}\varepsilon(\beta,\gamma)\iota(e_{\beta+\gamma})\nonumber\\
& = x^{\langle\beta,\gamma\rangle} E^-(-\beta,x) e^{\pi 
i\langle\beta,\gamma\rangle} \varepsilon(\gamma,\beta)\iota(e_{\beta+\gamma}).
\end{align*}
Then
\begin{equation}\label{betagammaop}
 \mathcal{Y}_{\beta,\gamma,\mathbb{Z}}(\iota(e_\beta),x)\iota(e_\gamma)= 
x^{\langle\beta,\gamma\rangle} E^-(-\beta,x)\iota(e_{\beta+\gamma})\in 
V_{-\beta-\gamma+L,\mathbb{Z}}'\lbrace x\rbrace
\end{equation}
by Proposition \ref{lattcontmod}.

Since $\iota(e_\beta)$ and $\iota(e_\gamma)$ generate $V_{\beta+L,\mathbb{Z}}$ 
and $V_{\gamma+L,\mathbb{Z}}$, respectively, as $V_{L,\mathbb{Z}}$-modules, 
Theorem \ref{intwopgen} implies that $\mathcal{Y}_{\beta,\gamma,\mathbb{Z}}$ is 
integral with respect to $V_{\beta+L,\mathbb{Z}}$, $V_{\gamma+L,\mathbb{Z}}$, 
and $V_{-\beta-\gamma+L,\mathbb{Z}}'$. Moreover, we see from Proposition 
\ref{lattcontmod} and (\ref{betagammaop}) that for $c\in\mathbb{C}$,
\begin{equation*}
 c\mathcal{Y}_{\beta,\gamma,\mathbb{Z}}(\iota(e_\beta),x)\iota(e_\gamma)\in 
V_{-\beta-\gamma+L,\mathbb{Z}}'\lbrace x\rbrace
\end{equation*}
if and only if $c\in\mathbb{Z}$. Thus $\mathcal{Y}_{\beta,\gamma,\mathbb{Z}}$ 
spans the lattice of intertwining operators in 
$V^{\beta+\gamma}_{\beta\,\gamma}$ which are integral with respect to 
$V_{\beta+L,\mathbb{Z}}$, $V_{\gamma+L,\mathbb{Z}}$, and 
$V_{-\beta-\gamma+L,\mathbb{Z}}'$.
\end{proof}

\begin{rema}
 Note that to prove Theorem \ref{lattintwopth} we only need the ``easy'' half of Proposition \ref{lattcontmod} which states that for any $\beta\in L^\circ$, the coefficients of products as in \eqref{dualspan} are contained in $V_{-\beta+L,\Z}'$. But the more difficult opposite inclusion gives the additional information that for $\beta,\gamma\in L^\circ$, the coefficients of $\mathcal{Y}_{\beta,\gamma,\Z}(w_\beta,x)w_\gamma$ for any $w_\beta\in V_{\beta+L,\Z}$, $w_\gamma\in W_{\gamma+L,\Z}$ are integral linear combinations of the coefficients of products as in \eqref{dualspan} (with $\beta$ replaced by $\beta+\gamma$).
\end{rema}

\begin{rema}
 Note that (\ref{betagammaop}) shows that for $\beta,\gamma\in L^\circ$, 
$\mathcal{Y}_{\beta,\gamma,\mathbb{Z}}(\iota(e_\beta),x)\iota(e_\gamma)\notin 
V_{\beta+\gamma+L,\mathbb{Z}}\lbrace x\rbrace$ since in general $\beta\notin 
L$. 
It is not clear that any non-zero integer multiple of 
$\mathcal{Y}_{\beta,\gamma,\mathbb{Z}}$ is in general integral with respect to 
$V_{\beta+L,\mathbb{Z}}$, $V_{\gamma+L,\mathbb{Z}}$, and 
$V_{\beta+\gamma+L,\mathbb{Z}}$.
\end{rema}

\noindent{\small \sc 
Beijing International Center for Mathematical Research, Peking University, 
Beijing, China 100084}\\
{\em E-mail address}:
\texttt{robertmacrae@math.pku.edu.cn} \\

\end{document}